\def\R{\mathrm{I\kern-0.21emR}}
\def\N{\mathrm{I\kern-0.21emN}}
\newcommand{\Lip}{\operatorname{Lip}}
\newcommand{\supp}{\mathrm{supp}}
\newcommand{\diam}{\mathrm{diam}}
\newcommand{\cl}{\mathrm{cl}}
\renewcommand{\geq}{\geqslant}
\renewcommand{\leq}{\leqslant}
\newtheorem{theorem}{Theorem}  
\newtheorem{proposition}{Proposition}
\newtheorem{lemma}{Lemma}
\theoremstyle{definition}\newtheorem{remark}{Remark}
\title{Universal approximations of quasilinear PDEs by finite distinguishable particle systems}
\author{
Thierry Paul\footnote{CNRS Laboratoire Ypatia des Sciences Mathématiques LYSM, Rome, Italy (\texttt{thierry.paul@sorbonne-universite.fr}).}
\and
Emmanuel Tr\'elat\footnote{Sorbonne Universit\'e, Universit\'e Paris Cit\'e, CNRS, Inria, Laboratoire Jacques-Louis Lions, LJLL, F-75005 Paris, France (\texttt{emmanuel.trelat@sorbonne-universite.fr}).}
}
\begin{document}

\maketitle

\begin{abstract}
In this paper, we prove that sufficiently regular solutions of any quasilinear PDE can be approximated by solutions of systems of $N$ distinguishable particles, to within $1/\ln(N)$.
This intruiguing result is related to recent developments in graph limit theory.
\end{abstract}

\tableofcontents

\section{Introduction}\label{sec_intro}

Particle approximations are well known for some classes of PDEs, like fluid equations: for fluid Euler or Navier-Stokes equations, one often speaks of ``fluid particles", in accordance with the classical Eulerian or Lagrangian viewpoints. 

In this paper we prove that sufficiently regular solutions of \emph{any} quasilinear evolution equation can be approximated by solutions of systems of $N$ distinguishable particles, to within $1/\ln\ln(N)$. 

To give a flavor of this surprising universal result and to simplify the setting, in this introduction let us restrict to the linear case. 
Let $n,d\in\N^*$ and let $\Omega$ be an open bounded subset of $\R^n$.
We consider a linear evolution equation
\begin{equation}\label{linearpde}
\partial_t y(t,x) = Ay(t,x)
\end{equation}
where $y(t,x)\in\R^d$ for $t>0$ and $x\in\Omega$, and where $A:D(A)\rightarrow L^2(\Omega,\R^d)$ is a linear operator generating a $C_0$ semigroup and $\Omega$ is a domain of $\R^n$. Let $[A]$ be the (distributional) Schwartz kernel of $A$, so that $(Ag)(x) = \langle[A](x,\cdot),f\rangle$ for every $g\in\mathscr{C}^\infty_c(\Omega,\R^d)$, in the distributional sense.
For instance if $[A](x,x')=\delta_x'$, the distributional derivative of the Dirac measure $\delta_x$ at $x$, then $A=-\partial_x$.

Now, in a first step, we approximate (for example, by convolution) the distribution $[A]$ with a family of smooth kernels $\sigma_\varepsilon$, depending on a parameter $\varepsilon>0$, such that $\sigma_\varepsilon$ converges in the distributional sense to $[A]$ as $\varepsilon\rightarrow 0$.
Now, for every $\varepsilon>0$, we have a bounded operator $A_\varepsilon$ on $L^2(\Omega)$, given by $(A_\varepsilon g)(x) = \int_\Omega \sigma_\varepsilon(x,x') g(x')\, dx'$ for every $g\in\mathscr{C}^\infty_c(\Omega,\R^d)$, and we consider the linear evolution equation
\begin{equation}\label{linearpde_epsilon}
\partial_t y_\varepsilon(t,x) = A_\varepsilon y_\varepsilon(t,x) = \int_\Omega \sigma_\varepsilon(x,x') y_\varepsilon(t,x')\, dx' ,
\end{equation}
which is viewed as an approximation of the linear evolution equation \eqref{linearpde}. Taking $y_\varepsilon(0,\cdot)=y(0,\cdot)$, as one can expect, under general assumptions we have, on a compact interval, $\Vert y_\varepsilon(t)-y(t)\Vert_{L^2} \leq C\varepsilon e^{\omega t}$ for some $C>0$ and $\omega\in\R$ not depending on the solutions -- or, some similar estimate in $\varepsilon$; in any case, something small as $\varepsilon$ is small.

In a second step, we approximate the integral evolution equation \eqref{linearpde_epsilon} by a family of finite particle systems, indexed by $N$, defined by
\begin{equation}\label{linearpde_system_epsilon}
\dot\xi_{\varepsilon,i}^N(t) = \frac{1}{N}\sum_{j=1}^N \sigma_\varepsilon (x_i^N,x_j^N) \, \xi_{\varepsilon,j}^N(t) .
\end{equation}
This can be done simply by approximating the integral in \eqref{linearpde_epsilon} with a Riemann sum, performed on a partition $\Omega=\cup_{i=1}^N\Omega_i^N$ of the domain $\Omega$.
Standard estimates allow one to measure the discrepancy between solutions of \eqref{linearpde_epsilon} and finite particle solutions of \eqref{linearpde_system_epsilon}. 

Applying the triangular inequality, we obtain an estimate of the form
$$
\bigg\Vert y(t,\cdot) - \sum_{i=1}^N \xi_{\varepsilon,i}^N(t) \, \mathds{1}_{\Omega_i^N}(\cdot) \bigg\Vert_{L^2(\Omega,\R^d)}
\leq \ C\left( \varepsilon + \frac{e^{C/\varepsilon}}{N} \right) 
$$
on a compact interval. Finally, choosing $\varepsilon \sim \frac{1}{\ln\ln N}$, we get the estimate
\begin{equation}\label{intro_loglog}
\bigg\Vert y(t,\cdot) - \sum_{i=1}^N \xi_{\varepsilon,i}^N(t) \, \mathds{1}_{\Omega_i^N}(\cdot) \bigg\Vert_{L^2(\Omega,\R^d)}
\leq \frac{C}{\ln N} 
\end{equation}
(or maybe, a positive power of the right-hand side) on a compact interval. 
This shows that, under few assumptions and in whole generality, any linear PDE can be approximated by an explicit family of finite particle systems, to within $1/\ln(N)$. 

%

This surprising result has actually emerged as a byproduct of our article \cite{PaulTrelat}, in which we have revisited and extended various ways to pass to the limit in finite systems of (possibly distinguishable, deterministic) particles as the number of particles tends to infinity. In particular, we have shown that the continuum / graph limit equation, which is derived as above from the particle system by the Riemann sum theorem, can always be identified with the Euler equation derived by hydrodynamic considerations from the Vlasov equation that is itself obtained by mean field limit. This actually deep fact has led us to introduce a set of $x\in\Omega$ standing for ``labels" of the particles, making them distinguishable. 
Then the idea outlined above emerged, because in the PDE setting, $\Omega$ can naturally be the domain of an unbounded operator and the label $x\in\Omega$ is the spatial variable of the PDE. 

In this study, an inspiring model has been the linear Hegselmann--Krause first-order consensus system (see \cite{HegselmannKrause})
\begin{equation}\label{HK_particle}
\dot\xi^N_i(t) = \frac{1}{N} \sum_{j=1}^N \sigma_{ij}^N \big(\xi^N_j(t)-\xi^N_i(t)\big), \qquad i\in\{1,\ldots,N\},
\end{equation}
which models propagation of opinions.
In \eqref{HK_particle}, the coefficient $\sigma_{ij}^N$ stands for the interaction of the agent $i$ with the agent $j$. The $N\times N$ matrix consisting of those coefficients, not necessarily symmetric, is naturally associated to a graph. This interpretation has certainly motivated the terminology of ``graph limit" (coined in \cite{Medvedev_SIMA2014}, used to describe how to pass to the continuum limit in \eqref{HK_particle} and get the evolution equation $\partial_t y(t,x) = \int_{\Omega} \sigma(x,x') (y(t,x')-y(t,x)) \, dx'$, whenever a limit function $\sigma$ (called ``graphon") does exist. Dropping the term $\xi^N_i(t)$ at the right-hand side of \eqref{HK_particle}, one can see that \eqref{HK_particle} coincides with \eqref{linearpde_system_epsilon} and provides a good and inspiring model to approach any linear evolution equation. 

To our knowledge, an early existing article in which a related (though different) idea can be found is \cite{LionsMas-Gallic}, in which the authors provide a deterministic particle approximation for heat equations and porous media equations. See also \cite{CarrilloCraigPatacchini_CVPDE2019, dif0, EspositoPatacchiniSchlichtingSlepcev_ARMA2021, FigalliKang_APDE2019,Carrillo Esposito Wu CVPDE 2024}


%

{

\subsection{Setting}
Multi-agent collective models have regained an increasing interest over the last years, due in particular to their connection with mean field and graph limit equations. At the microscopic scale, such models consist of considering particles or agents evolving according to the dynamics
 \begin{equation}\label{eqagent}
\dot\xi_i(t) = \frac{1}{N} \sum_{j=1}^N G_{ij}^N(t,\xi_i(t),\xi_j(t)) ,
\qquad i\in\{1,\ldots,N\},
\end{equation}
for some (large) number of agents $N\in\N^*$ where, for every $i\in\{1,\ldots,N\}$, $\xi_i(t)\in\R^d$ (for some $d\in\N^*$) stands for various parameters describing the behavior of the $i^\textrm{th}$ agent and $G_{ij}^N:\R\times\R^d\times\R^d\rightarrow\R^d$ is a mapping modeling the interaction between the $i^\textrm{th}$ and $j^\textrm{th}$ agents.

This paper deals somehow with the  path inverse to the mean field limit one: given a general partial differential equation (PDE), is is possible to construct explicitly an agent system of the form \eqref{eqagent} such that the corresponding graph limit equation coincides with the given PDE we started with? Surprisingly, this happens to be true in a very general setting and this is a consequence of the analysis done in the paper.

The question of whether some classes of PDEs are a ``natural" limit of particle systems is classical in fluid mechanics and certainly dates back to Euler: it is classical that the Euler fluid equation can be seen, at least formally, as the limit of evolving ``particles of fluids". This has been formalized in the famous article \cite{Arnold} where Arnol'd interpreted the Euler equation as a geodesic equation in the space of diffeomorphisms, leading to a number of subsequent studies; we refer to \cite{Bauer} (see also the references therein) for a survey on how to ``cook up" appropriate groups of diffeomorphisms (and thus, of particle systems) to generate classes of fluid PDEs, like Euler, Camassa-Holm, etc.
We also refer to \cite{Gallagher_BAMS2019} for a recent survey on microscopic, mesoscopic and macroscopic scales for fluid dynamics.

But it is much less classical to show that other, more general PDEs can as well be obtained by passing to the limit in some particle systems.
For transport equations, the topic has been extensively studied in \cite{dif0,dif1,dif2}. 
Recently, thanks to the concept of graph limit elaborated in \cite{Medvedev_SIMA2014}, it has been possible to show that heat-like equations can as well be obtained as limits of particle systems (see also \cite{AyiPouradierDuteil_JDE2021, BiccariKoZuazua_M3AS2019, BonnetPouradierDuteilSigalotti_2021, EspositoPatacchiniSchlichtingSlepcev_ARMA2021}).
In \cite{FigalliKang_APDE2019}, the authors provide a rigorous derivation from the kinetic Cucker-Smale model to the macroscopic pressureless Euler system by hydrodynamic limit, using entropy methods and deriving error estimates. 

Actually, during the \emph{Le\c{c}ons Jacques-Louis Lions} given in our laboratory in the fall 2021 by Dejan Slepcev, we were intrigued by his way of deriving heat-like equations from unusual particle systems, by taking not only the limit as the number $N$ of agents tends to $+\infty$, but also another parameter $\varepsilon$ tends to $0$, at some precise scaling (see \cite{EspositoPatacchiniSchlichtingSlepcev_ARMA2021}). The role of $\varepsilon$ is to smoothen the dynamics. His striking exposition has been for us a great source of inspiration and has motivated the last part of the present article.

In this last part, we provide for a large range of quasilinear PDEs a natural and constructive way for associating an agent system to them. Shortly, as a particular case of our analysis, considering a general PDE
\begin{equation}\label{eqpde}
\partial_ty(t,x)=\sum_{\vert\alpha\vert\leq p} a_\alpha(t,x,y(t,x)) \partial_x^\alpha y(t,x) = A(t,x,y(t,x))y(t,x),
\end{equation}
we show that \eqref{eqpde} is the graph limit of (for example) the particle system \eqref{eqagent} with
$$
G_{ij}(t,\xi,\xi')=G_\varepsilon(t,i,j,\xi,\xi')=\xi'\sum_{\vert\alpha\vert\leq p} a_\alpha(t,x,\xi)\partial_{x'}^\alpha \frac{e^{-\frac{(x-x')^2}{2\varepsilon}}}{(\pi\varepsilon)^{\frac 12}}
$$ 
in the limit $N\gg\varepsilon^{-1}\rightarrow+\infty$, with some appropriate scalings. We establish convergence estimates in Wasserstein distance in general, and in $L^2$ norm under an additional (but general) semigroup assumption.

Finally, as announced, as a surprising byproduct built on the previous developments, we show in Section \ref{sec_PDE} that general quasilinear PDEs can be obtained by passing to the limit in explicit particle systems, thanks to two asymptotic parameters.

}

\subsection{Some tools and notations}

Let $(\Omega,\mathrm{d}_\Omega)$ be a complete metric space, endowed with a probability measure  $\nu\in\mathcal{P}(\Omega)$. 

\paragraph{Tagged partitions, and Riemann sum theorem.}
We say that $(\mathcal{A}^N,x^N)_{N\in\N^*}$ is a family of \emph{tagged partitions} of $\Omega$ associated with $\nu$ if 
$\mathcal{A}^N=(\Omega^N_1,\ldots,\Omega^N_N)$ is a $N$-tuple of disjoint subsets $\Omega^N_i\subset\Omega$ such that \begin{equation}\label{def_tagged}
\Omega = \bigcup_{i=1}^N\Omega^N_i\qquad \textrm{with}\qquad
\nu(\Omega^N_i)=\frac{1}{N}
\quad \textrm{and}\quad
\diam_\Omega(\Omega^N_i)\leq \frac{C_\Omega}{N^\gamma}\qquad \forall i\in\{1,\ldots,N\},
\end{equation}
for some $C_\Omega>0$ and $\gamma>0$ not depending on $N$,
and $x^N=(x^N_1,\ldots,x^N_N)$ is a $N$-tuple of points $x^N_i\in\Omega^N_i$. 
Here, $\diam_\Omega(\Omega^N_i)$ is the supremum of all $\mathrm{d}_\Omega(x,x')$ over all possible $x,x'\in\Omega^N_i$.

Families of tagged partitions always exist when $\Omega$ is a compact $n$-dimensional smooth manifold having a boundary or not and $\nu$ is a Lebesgue measure on $\Omega$, with $\gamma=1/n$. 
For instance, when $\Omega=[0,1]$, we take $\Omega^N_i=[a^N_i,a^N_{i+1})$ for some subdivision $0=a^N_1<a^N_2<\cdots<a^N_{N+1}=1$ satisfying \eqref{def_tagged}; when $d\nu(x)=dx$, a natural choice is $a^N_i=\frac{i-1}{N}$, and $x^N_i=a^N_i$ or $\frac{a^N_i+a^N_{i+1}}{2}$, for every $i\in\{1,\ldots,N\}$ (and then $C_\Omega=1$ and $r=1$ in this case). When $\Omega$ is a compact domain of $\R^n$, a family of tagged partitions is obtained by considering a family of meshes, as classically done in numerical analysis, with $\gamma=1/n$.

The concept of tagged partition is used in Riemann (and more generally, Henstock-Kurzweil) integration theory. 
We refer to \cite{Fremlin} for (much more) general results.
A real-valued function $f$ on $\Omega$, of compact support, is said to be $\nu$-Riemann integrable if it is bounded, $\nu$-measurable, and if, for any family $(\mathcal{A}^N,x^N)_{N\in\N^*}$ of tagged partitions, we have
\begin{equation}\label{CV_Riemann_sum_1}
\sum_{i=1}^N \int_{\Omega_i^N} \vert f(x)-f(x^N_i) \vert\, d\nu(x) = \mathrm{o}(1) 
\end{equation}
and thus
\begin{equation}\label{CV_Riemann_sum_2}
\int_{\Omega} f\, d\nu = \frac{1}{N} \sum_{i=1}^N f(x^N_i) + \mathrm{o}(1)
\end{equation}
as $N\rightarrow +\infty$.
A function $f$ of essential compact support on $\Omega$ is $\nu$-Riemann integrable if and only if $f$ is bounded and continuous $\nu$-almost everywhere on $\Omega$. Given any Lipschitz continuous function $f$ on $\Omega$, of Lipschitz constant $\Lip(f)$, we have
\begin{equation}\label{CV_Riemann_sum_3}
\left\vert \int_{\Omega} f\, d\nu - \frac{1}{N} \sum_{i=1}^N f(x^N_i) \right\vert \leq \frac{C_\Omega \Lip(f)}{N^\gamma}
\end{equation}
for any $N\in\N^*$ (see, e.g., \cite[Appendix A.4.1]{PaulTrelat} for this very standard and elementary result).

\paragraph{Functional spaces.}
Let $d\in\N^*$. We denote by $\Vert\ \Vert_{\R^d}$ the Euclidean norm on $\R^d$.

We denote by $\Lip(\Omega,\R^d)$ the space of functions $f:\Omega\rightarrow\R^d$ that are Lipschitz continuous on $\Omega$, of Lipschitz constant 
$$
\Lip(f) = \sup_{{x,x'\in\Omega}\atop{x\neq x'}} \frac{\Vert f(x)-g(x')\Vert}{\mathrm{d}_\Omega(x,x')} . 
$$

For any $k\in[1,+\infty]$, we denote by $L^k_\nu(\Omega,\R^d)$, or simply by $L^k(\Omega,\R^d)$, the usual Lebesgue space. For $k<+\infty$, it consists of measurable functions $f:\Omega\rightarrow\R^d$ such that $\Vert f\Vert_{L^k} = \left(\int_\Omega\Vert f(x)\Vert_{\R^d}^k\, d\nu(x)\right)^{1/k}<+\infty$. For $k=+\infty$, it consists of $\nu$-essentially bounded measurable functions $f:\Omega\rightarrow\R^d$, endowed with the essential supremum norm $\Vert\ \Vert_{L^\infty}$. 
Given any $p\in\N$, we denote by $W^{p,k}_\nu(\Omega,\R^d)$, or simply $W^{p,k}(\Omega,\R^d)$, the Sobolev space of functions $f:\Omega\rightarrow\R^d$ whose partial (distributional) derivatives up to order $p$ are identified with functions of $L^k(\Omega,\R^d)$, endowed with the norm
$$
\Vert f\Vert_{W^{p,k}} = \max_{\vert\alpha\vert\leq p} \Vert D^\alpha g\Vert_{L^k}.
$$
For $k=2$, we denote $H^p(\Omega,\R^d)=W^{p,2}(\Omega,\R^d)$.

Assume that $\Omega$ is a smooth finite-dimensional manifold.
For any $k\in[1,+\infty]$, we denote by $\mathscr{C}^k(\Omega,\R^d)$ (resp., $\mathscr{C}^\infty_c(\Omega,\R^d)$) the set of functions $f:\Omega\rightarrow\R^d$ of class $\mathscr{C}^k$ (resp., moreover, of compact support).

\section{Finite particle approximations of quasilinear evolution systems}\label{sec_abstract}
Let $\Omega$ be a complete metric space, endowed with the distance $\mathrm{d}_\Omega$ and with a probability measure  $\nu\in\mathcal{P}(\Omega)$, assumed to have the property of having a family $(\mathcal{A}^N,x^N)_{N\in\N^*}$ of tagged partitions associated with $\nu$, satisfying \eqref{def_tagged}.

Let $d\in\N^*$ and let $X=\mathcal{F}(\Omega,\R^d)$ be a Banach space of functions on $\Omega$, taking their values in $\R^d$. 
Typical examples are Sobolev spaces, like $X=L^2(\Omega,\R^d)$ or more generally $X=H^s(\Omega,\R^d)$ for some $s\in\R$, the space of continuous functions $X=\mathscr{C}^0(\Omega,\R^d)$ or of functions of class $C^p$, the space of analytic functions, H\"older spaces, etc. 

Let $T>0$.
We consider the quasilinear evolution equation
\begin{equation}\label{abstract_quasilinear}
\boxed{
\dot{y}(t) = A[t,y(t)]\, y(t) + f[t,y(t)]
}
\end{equation}
for $t\in[0,T]$, with some initial condition $y(0)=y^0\in X$, where $A[t,z]$ is a linear operator on $X$, of domain $D(A[t,z])$, and $f[t,z]\in X$, for all $(t,z)\in[0,T]\times X$.


Assuming that $t\mapsto y(t)\in X$ is a solution of \eqref{abstract_quasilinear}, since $y(t)$, as an element of the functional space $X=\mathcal{F}(\Omega,\R^d)$, is a function on $\Omega$, in the sequel we denote indifferently $y(t)(x)=y(t,x)$ for all $t\geq 0$ and $x\in\Omega$.

Our objective is to prove that, under appropriate assumptions, sufficiently regular solutions $t\mapsto y(t)$ of \eqref{abstract_quasilinear} can be approximated by the solutions of a family of finite particle systems evolving in $\Omega$.

Note that we use brackets to denote $A[t,y(t)]$ and $f[t,y(t)]$ in \eqref{abstract_quasilinear}, in order to underline their possible nonlocal dependence with respect to $x\in\Omega$: $A[t,y(t)]$ and $f[t,y(t)]$ do not  necessarily depend only on $y(t,x)$, the value at $x$ of the function $y(t)\in X$ (for instance, a term like $(\int_\Omega y(t,x')^2\, dx' ) y(t,x)$, or like $\int_\Omega \rho(x-x')y(t,x')\, dx' \, y(t,x)^3$ as it is the case in some Vlasov equations).

\medskip

The section is structured as follows.

In Section \ref{sec_abstract_setting}, we recall the general setting introduced by Kato to ensure existence and uniqueness of solutions for abstract quasilinear evolution equations. 

In order to derive families of finite particles systems approximating the quasilinear evolution equation \eqref{abstract_quasilinear}, our strategy will be in two steps:
\begin{enumerate}[parsep=0mm, itemsep=0mm, topsep=0mm]
\item Given a family of \emph{bounded} linear operators $A_\varepsilon[t,z]$ approximating $A[t,z]$, derive a convergence estimates of solutions $y_\varepsilon$ of the $\varepsilon$-approximation evolution equation a given solution $y$ of \eqref{abstract_quasilinear}.
This is done in Section \ref{sec_abstract_approx}.
\item In Section \ref{sec_abstract_particle_approx}, for any $\varepsilon$ fixed, derive a family of finite particle systems, indexed by $N\in\N^*$, whose solutions converge as $N\rightarrow+\infty$ to solutions of the $\varepsilon$-approximation evolution equation. This convergence will follows from Theorem \ref{thm_estim_graph} in Appendix \ref{app_thm_estim_graph}. Of course, we will have to be careful enough to keep track of constants regarding the dependence with respect to $\varepsilon$ and $N$. 
\end{enumerate}
In Section \ref{sec_abstract_main_result}, we state the main result, establishing convergence of solutions of the finite particle system to a given solution of \eqref{abstract_quasilinear}. Estimates of convergence are obtained by the triangular inequality. They depend on $\varepsilon$ and $N$, but as already alluded the estimates blow up when $\varepsilon\rightarrow 0$ with $N$ being fixed and thus the limits must be taken at some appropriate scaling. 

In what follows, as a general notation, given any Banach space $E$, we denote by $\Vert\ \Vert_E$ its norm and by $L(E)$ the Banach space of all bounded linear operators on $E$. Given another Banach space $F$, the notation $L(E,F)$ stands for the Banach space of all bounded linear operators from $E$ to $F$.

\subsection{Abstract quasilinear evolution systems}\label{sec_abstract_setting}


In this section, as well as in the next Section \ref{sec_abstract_approx}, we do not need that $X$ be a space of functions: 
$X$ can be any arbitrary Banach space.

Existence and uniqueness of a solution of \eqref{abstract_quasilinear} are classical and are ensured by proving that the following mapping $\Phi$ is contractive and thus has a fixed point (see \cite{Kato_1975, Kato_1993} or \cite[Section 6.4, Theorem 4.6]{Pazy}): given an appropriate function $t\mapsto z(t)\in X$, $y(\cdot)=\Phi(z(\cdot))$ is defined as the unique solution of $\dot{y}(t) = A[t,z(t)]\, y(t) + f[t,z(t)]$ such that $y(0)=y^0$. This is done under the following classical assumptions, taken from \cite{Kato_1993, Sanekata_1989} (in fact, slightly more general assumptions are done in \cite{Kato_1993}).
The following setting and results are due to Kato.

\smallskip

\noindent\textbf{Functional space assumptions.}
\begin{enumerate}[label=$(H_{\arabic*})$, leftmargin=*, parsep=0.7mm, itemsep=0.7mm, topsep=0.7mm]
\item\label{H_Z} There exists a Banach subspace $Z$ of $X$, dense in $X$ and continuously embedded in $X$, i.e., there exists $C_1>0$ such that $\Vert z\Vert_X\leq C_1\Vert z\Vert_Z$ for every $z\in Z$. 
\item\label{H_S} There exists an operator $S\in L(Z,X)$ such that $\Vert z\Vert_Z = \Vert z\Vert_X + \Vert Sz\Vert_X$ (graph norm); equivalently, $S$ is a closed operator on $X$ of domain $D(S)=Z$.
\end{enumerate}

\smallskip

Let $y^0\in Z$. 
In what follows, given any $r>0$, we denote by $B_Z(y^0,r) = \{ z\in Z\ \mid\ \Vert z-y^0\Vert_Z\leq r\}$ the closed ball in $Z$ of center $y^0$ and radius $r$, and by $\cl_X({B}_Z(y^0,r))$ its closure in $X$. Note that $\cl_X({B}_Z(y^0,r))=B_Z(y^0,r)$ if $X$ and $Z$ are reflexive.

\smallskip

\noindent\textbf{Assumptions on the operator.}
There exists $r>0$ such that, for all $t\in[0,T]$ and $z\in B_Z(y^0,r)$:
\begin{enumerate}[label=$(H_{\arabic*})$, resume, leftmargin=*, parsep=0.7mm, itemsep=0.7mm, topsep=0.7mm]
\setcounter{enumi}{2}
\item\label{H_stab} (Semigroup and stability) The operator $A[t,z]$ generates a $C_0$ semigroup $(e^{sA[t,z]})_{s\geq 0}$ on $X$, and there exist $M\geq 1$ and $\omega\in\R$ such that, for every $k\in\N^*$, for all $s_1,\ldots,s_k\geq 0$ and all $0\leq t_1\leq\cdots\leq t_k\leq T$, one has
$\Vert e^{s_1A(t_1,z)} \cdots e^{s_kA(t_k,z)} \Vert_{L(X)} \leq M e^{(s_1+\cdots+s_k)\omega}$.\footnote{Note that the latter stability estimate holds true (with $M=1$) if $(e^{sA[t,z]})_{s\geq 0}$ is a semigroup of contractions.
}
\item\label{H_regZ} 
$Z\subset D(A[t,z])$, $A[t,z]\in L(Z,X)$ depends continuously on $t$, and there exists $C_4\geq 0$ such that
$\Vert A[t,z_1]-A[t,z_2]\Vert_{L(Z,X)}\leq C_4 \Vert z_1-z_2\Vert_X$ for all $t\in[0,T]$ and $z_1,z_2\in B_Z(y^0,r)$.
\item\label{H_intertwining} (Intertwining condition) $S A[t,z]  = A[t,z] S + B[t,z] S$ 
with $B\in \mathscr{C}^0([0,T]\times \cl_X({B}_Z(y^0,r)),L(X))$,
and there exists $C_5\geq 0$ such that $\Vert B[t,z]\Vert_{L(X)}\leq C_5$ for all $(t,z)\in [0,T]\times \cl_X({B}_Z(y^0,r))$.
\end{enumerate}

\smallskip

\noindent\textbf{Assumptions on $f$.}
Finally, we assume that:
\begin{enumerate}[label=$(H_{\arabic*})$, resume, leftmargin=*, parsep=0.7mm, itemsep=0.7mm, topsep=0.7mm]
\item\label{H_f_Z} $f\in\mathscr{C}^0([0,T]\times B_Z(y^0,r),Z)$ and there exists $C_6\geq 0$ such that $\Vert f[t,z]\Vert_Z\leq C_6$ for all $t\in[0,T]$ and $z\in B_Z(y^0,r)$.
\item\label{H_f_Lip} There exists $C_7\geq 0$ such that
$\Vert f[t,z_1]-f[t,z_2]\Vert_X \leq C_7\Vert z_1-z_2\Vert_X$ for all $t\in[0,T]$ and $z_1,z_2\in B_Z(y^0,r)$.
\end{enumerate}

\smallskip

\noindent\textbf{Evolution system.}
As proved in \cite{Kato_1993, Sanekata_1989} (see also \cite[Section 6.4]{Pazy}), under Assumptions \ref{H_Z} to \ref{H_intertwining}, for every $z(\cdot)\in\mathscr{C}^0([0,T],X)$ such that $z(t)\in B_Z(y^0,r)$ for every $t\in[0,T]$, there exists an \emph{evolution system} $(U_z(t,s))_{0\leq s\leq t\leq T}$ on $X$, i.e., a family of operators $U_z(t,s)\in L(X)$ depending continuously on $(t,s)$ and satisfying, for all $0\leq s\leq r\leq t\leq T$:
\begin{enumerate}[label=$(E_{\arabic*})$, leftmargin=*, parsep=0.7mm, itemsep=0.7mm, topsep=0.7mm]
\item\label{E_morphism} $U_z(t,s) = U_z(t,\tau) U_z(\tau,s)$ and $U_z(s,s) = \mathrm{id}_X$.
\item\label{E_UZ} $U_z(t,s)Z\subset Z$; 
\item\label{E_estim_exp} $\Vert U_z(t,s)\Vert_{L(X)} \leq Me^{\omega(t-s)}$ and $\Vert U_z(t,s)\Vert_{L(Z)} \leq Me^{\omega(t-s)}$ (where the value of $\omega$ has been increased if necessary);
\item\label{E_ode} $\partial_t U_z(t,s) = A[t,z(t)]\, U_z(t,s)$ and $\partial_s U_z(t,s) = -U_z(t,s)A[s,z(s)]$ on $Z$ (the derivatives exist in $L(Z,X)$).
\end{enumerate}

\begin{proposition}\label{prop_existence_uniqueness}
Let $y^0\in Z$. Under Assumptions \ref{H_Z} to \ref{H_f_Lip}, there exists a unique solution $y(\cdot)\in\mathscr{C}^0([0,T'],Z)\cap\mathscr{C}^1([0,T'],X)$ of \eqref{abstract_quasilinear} such that $y(0)=y^0$, for some $T'\in(0,T]$. Moreover, for every $t\in[0,T']$ one has $y(t)\in B_Z(y^0,r)$ and
\begin{equation}\label{duhamel_implicit}
y(t) = U_y(t,0)y^0 + \int_0^t U_y(t,s) f[s,y(s)]\, ds .
\end{equation}
\end{proposition}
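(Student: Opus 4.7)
The plan is to apply the classical Kato fixed-point argument, constructing the solution as the unique fixed point of the map $\Phi$ sending a ``frozen coefficient'' path $z(\cdot)$ to the solution of the associated non-autonomous linear Cauchy problem. First I would set up the closed metric space
\[
\mathcal{X}_{T'}=\bigl\{z\in\mathscr{C}^0([0,T'],X)\,:\,z(0)=y^0,\ z(t)\in \cl_X(B_Z(y^0,r))\ \forall t,\ \sup_{t}\|z(t)\|_{Z}\leq \|y^0\|_Z+r\bigr\},
\]
endowed with the uniform $X$-distance (this is complete by lower semicontinuity of the $Z$-norm under $X$-limits, using that closed balls of the reflexive-like pair $(Z,X)$ are $X$-closed in the Kato setting, otherwise one works directly in $\cl_X(B_Z(y^0,r))$). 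For each such $z$, Assumptions \ref{H_Z}--\ref{H_intertwining} are exactly those guaranteeing the existence of the evolution system $(U_z(t,s))_{0\le s\le t\le T}$ with properties \ref{E_morphism}--\ref{E_ode}, so one defines
\[
\Phi(z)(t)=U_z(t,0)y^0+\int_0^t U_z(t,s)\,f[s,z(s)]\,ds.
\]

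Next I would show that $\Phi$ maps $\mathcal{X}_{T'}$ into itself for $T'$ small enough. Invariance of $Z$ under $U_z$ (property \ref{E_UZ}) together with the bound $\|U_z(t,s)\|_{L(Z)}\le Me^{\omega(t-s)}$ from \ref{E_estim_exp}, combined with $\|f[s,z(s)]\|_Z\le C_6$ from \ref{H_f_Z}, gives
\[
\|\Phi(z)(t)-y^0\|_Z\le (Me^{\omega T'}-1)\|y^0\|_Z+C_6 T' M e^{\omega T'},
\]
which is smaller than $r$ provided $T'$ is small. Continuity of $t\mapsto \Phi(z)(t)$ in $X$ (and even in $Z$) follows from \ref{E_ode} and the continuity assumptions on $f$.

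The core step is to prove that $\Phi$ is a contraction on $\mathcal{X}_{T'}$ in the uniform $X$-distance. This rests on the Lipschitz dependence of the evolution system on its driver, which I would obtain from the variation-of-constants identity
\[
U_{z_1}(t,0)-U_{z_2}(t,0)=\int_0^t U_{z_1}(t,s)\bigl(A[s,z_1(s)]-A[s,z_2(s)]\bigr)U_{z_2}(s,0)\,ds,
\]
valid on $Z$ by \ref{E_ode} and then extended by density. Using \ref{H_regZ} to estimate $\|A[s,z_1(s)]-A[s,z_2(s)]\|_{L(Z,X)}\le C_4\|z_1(s)-z_2(s)\|_X$, the uniform bound $\|U_{z_2}(s,0)y^0\|_Z\le Me^{\omega s}\|y^0\|_Z$, and \ref{E_estim_exp} again for the left factor in $L(X)$, yields
\[
\|U_{z_1}(t,0)y^0-U_{z_2}(t,0)y^0\|_X\le M^2 C_4 \|y^0\|_Z\, T'e^{2\omega T'}\,\|z_1-z_2\|_{\mathscr{C}^0([0,T'],X)}.
\]
A similar estimate handles the Duhamel term, combined with the $X$-Lipschitz bound \ref{H_f_Lip} on $f$. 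Summing, one gets $\|\Phi(z_1)-\Phi(z_2)\|_{\mathscr{C}^0([0,T'],X)}\le K(T')\|z_1-z_2\|_{\mathscr{C}^0([0,T'],X)}$ with $K(T')\to 0$ as $T'\to 0$, hence contraction for $T'$ small enough.

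By the Banach fixed-point theorem, $\Phi$ admits a unique fixed point $y(\cdot)\in\mathcal{X}_{T'}$, which by construction satisfies \eqref{duhamel_implicit}. Regularity $y\in\mathscr{C}^0([0,T'],Z)\cap\mathscr{C}^1([0,T'],X)$ and the fact that $y$ solves \eqref{abstract_quasilinear} in $X$ follow from \ref{E_ode} applied to $U_y$, together with the fact that $f[\cdot,y(\cdot)]\in\mathscr{C}^0([0,T'],Z)$, allowing differentiation of the Duhamel formula in $X$. Uniqueness in the larger class is a Gronwall argument: if $\tilde y$ is another solution in $\mathscr{C}^0([0,T'],Z)\cap\mathscr{C}^1([0,T'],X)$, then $\tilde y=\Phi(\tilde y)$, and iterating the same contraction estimate forces $\tilde y=y$. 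The main obstacle is the Lipschitz bound on $z\mapsto U_z$, which is delicate because $A[t,z]$ is only Lipschitz from $Z$ to $X$ (not $X$ to $X$); this forces one to measure $\Phi$'s Lipschitz constant in the weaker $X$-norm while controlling the driver $y^0$ and the Duhamel integrand in the stronger $Z$-norm, exactly the asymmetry that Assumption \ref{H_intertwining} (via the evolution system's propagation of $Z$-regularity) is designed to accommodate.
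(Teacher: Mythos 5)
Your overall strategy is the same as the paper's (Kato's frozen-coefficient map $\Phi$, contraction in the weak $\mathscr{C}^0([0,T'],X)$ metric while controlling data in $Z$), but there is a genuine gap at the point you dismiss parenthetically: the completeness of your space $\mathcal{X}_{T'}$. The claim that the $Z$-norm is lower semicontinuous under $X$-limits, i.e.\ that $Z$-balls are $X$-closed, is false in general non-reflexive Banach spaces (take $Z=\mathscr{C}^1$, $X=\mathscr{C}^0$: a $\mathscr{C}^1$-bounded sequence can converge uniformly to a Lipschitz, non-$\mathscr{C}^1$ function), and the setting of Assumptions \ref{H_Z}--\ref{H_f_Lip} is expressly designed (following Kato and Sanekata) to cover the non-reflexive case. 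Your fallback ``otherwise one works directly in $\cl_X(B_Z(y^0,r))$'' is exactly what the paper does, but then the Banach fixed point $y(\cdot)$ is only known to be an $X$-continuous curve with values in $\cl_X(B_Z(y^0,r))$: its values need not lie in $Z$ at all, so neither the evolution system $U_y$, nor $f[s,y(s)]$ (defined only on $B_Z(y^0,r)$ by \ref{H_f_Z}), nor the Duhamel formula \eqref{duhamel_implicit}, nor the equation itself make sense yet. Your final paragraph, which differentiates the Duhamel formula using $f[\cdot,y(\cdot)]\in\mathscr{C}^0([0,T'],Z)$, presupposes precisely the membership $y(\cdot)\in\mathscr{C}^0([0,T'],Z)$, $y(t)\in B_Z(y^0,r)$, that has not been proved. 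This is the one nontrivial step of the paper's proof: using the intertwining condition \ref{H_intertwining} to write $SU_z(t,s)=U_z(t,s)S+\int_s^t U_z(t,\tau)B[\tau,z(\tau)]SU_z(\tau,s)\,d\tau$, hence the identity $Sy_k(\cdot)=\Psi_{T'}(y_{k-1}(\cdot),Sy_k(\cdot))$ for the Picard iterates (see \eqref{sumoffourterms}--\eqref{defPsiT}), and then showing $(Sy_k)_k$ is Cauchy in $\mathscr{C}^0([0,T'],X)$, so that $(y_k)_k$ is Cauchy in the graph norm of $S$ and the limit lies in $\mathscr{C}^0([0,T'],Z)$. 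In your write-up the operator $S$ and Assumption \ref{H_intertwining} are never actually used, only invoked rhetorically, so this recovery argument is missing.

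A secondary flaw: your self-map estimate $\Vert\Phi(z)(t)-y^0\Vert_Z\leq(Me^{\omega T'}-1)\Vert y^0\Vert_Z+C_6T'Me^{\omega T'}$ implicitly uses $\Vert U_z(t,0)-\mathrm{id}\Vert_{L(Z)}\leq Me^{\omega t}-1$, which fails for evolution systems generated by unbounded operators ($\Vert U_z(t,0)-\mathrm{id}\Vert$ does not tend to $0$ as $t\to0$); what one has is strong continuity of $t\mapsto U_z(t,0)y^0$ in $Z$, and its uniformity with respect to $z\in\mathcal{S}_{T'}$ again comes out of the intertwining formula (this is how the paper gets that $\Phi_{T'}$ maps $\mathcal{S}_{T'}$ into itself for $T'$ small). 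In the reflexive case your argument is essentially complete and coincides with the paper's; in the general case the missing $\Psi_{T'}$-type argument is the heart of the matter and must be supplied.
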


Note that, in contrast to the usual Duhamel formula in the classical linear case, the formula \eqref{duhamel_implicit} is implicit in general because of the dependence with respect to $y$, see Remark \ref{rem_explicit} further. 

\begin{remark}\label{rem_prop_existence_uniqueness}
It follows from the proof given hereafter that the time $T'$ only depends on $y^0$ and on the spaces and various constants given through Assumptions \ref{H_Z} to \ref{H_f_Lip}: given some Banach spaces $X$ and $Z$ satisfying \ref{H_Z} and \ref{H_S}, some $y^0\in Z$, some $r>0$, $M\geq 1$, $\omega\in\R$ and some nonnegative constants $C_4,C_5,C_6,C_7$, the time $T'$ is uniform with respect to all operators $A$ and functions $f$ satisfying \ref{H_stab} to \ref{H_f_Lip}. We insist on the fact that $T'$ depends on the Banach spaces $X$ and $Z$: if Assumptions \ref{H_Z} to \ref{H_f_Lip} are satisfied for other Banach spaces $X$ and $Z$, then the time $T'$ may be different; denoting by $T'_{X,Z}$ this time, we can note that, if $X_1$ (resp., $Z_1$) is continuously embedded in $X_2$ (resp., $Z_2$), i.e., $X_1\hookrightarrow X_2$ and $Z_1\hookrightarrow Z_2$, and if Assumptions \ref{H_Z} to \ref{H_f_Lip} are satisfied for the two pairs of Banach spaces $(X_1,Z_1)$ and $(X_2,Z_2)$, then $T'_{X_1,Z_1}\leq T'_{X_2,Z_2}$.

It also follows from the proof that, taking $T'$ smaller if necessary, for every $\tilde y^0\in B_Z(y^0,r/2)$, there exists a unique solution $y(\cdot)\in\mathscr{C}^0([0,T'],Z)\cap\mathscr{C}^1([0,T'],X)$ of \eqref{abstract_quasilinear} such that $y(0)=\tilde y^0$, and taking its values in $B_Z(y^0,r)$ (similar statements can be found, e.g., in \cite{HughesKatoMarsden_ARMA1977}).
\end{remark}

\begin{proof}
The arguments can be found in \cite{Kato_1993, Sanekata_1989} (see also \cite{Kato_1975, Pazy} for a less general result but simpler proof), although not exactly in this form. We give a proof for completeness.

For every $r>0$,
for every $T'\in(0,T]$, let $\mathcal{S}_{T'}$ be the closed convex subset of all $z(\cdot)\in\mathscr{C}^0([0,T'],Z)$ such that $z(0)=y^0$ and $z(t)\in B_Z(y^0,r)$ for every $t\in[0,T']$.
Note that, by \ref{H_Z}, $\mathscr{C}^0([0,T'],Z) \subset \mathscr{C}^0([0,T'],X)$.
Given any $z(\cdot)\in\mathcal{S}_{T'}$, 
we consider the Cauchy problem
\begin{equation}\label{evol_eq_y_z}
\dot{y}(t) = A[t,z(t)]\, y(t) + f[t,z(t)] , \quad y(0)=y^0 \in Z .
\end{equation}
Using \ref{E_ode} and \eqref{evol_eq_y_z}, we obtain $\frac{d}{ds}(U_z(t,s)y(s))=U_z(t,s)f[s,y(s)]$, that we then integrate on $[0,t]$. Using \ref{H_f_Z} and \ref{E_UZ}, we conclude that \eqref{evol_eq_y_z} 
has a unique solution 
$y(\cdot)\in\mathscr{C}^0([0,T'],X)$, taking its values in $Z$, given by
\begin{equation}\label{sol_y_z}
y(t) = U_z(t,0)y^0 + \int_0^t U_z(t,s) f[s,z(s)]\, ds 
\end{equation}
for every $t\in[0,T']$, and we set $\Phi_{T'}(z(\cdot))=y(\cdot)$. This defines a map $\Phi_{T'}$ on $\mathcal{S}_{T'}$.

Let us prove that, actually, $y(\cdot)\in\mathscr{C}^0([0,T'],Z)\cap\mathscr{C}^1([0,T'],X)$.
Using \ref{H_S}, it suffices to prove that $Sy(\cdot)\in\mathscr{C}^0([0,T'],X)$.
We begin by noting that, using \ref{H_intertwining} and \ref{E_ode}, we have 
$SU_z(t,s) = U_z(t,s)S + \int_s^t U_z(t,\tau) B[\tau,z(\tau)] S U_z(\tau,s)\, d\tau$ 
on $Z$ (actually, this formula implies that $U_z(t,s)\in L(Z)$ depends continuously on $(t,s)$).
Then, using \eqref{sol_y_z}, we have 
\begin{multline}\label{sumoffourterms}
Sy(t) = U_z(t,0)Sy^0 + \int_0^t U_z(t,s) S f[s,z(s)] \, ds 
+ \int_0^t U_z(t,\tau) B[\tau,z(\tau)] S U_z(\tau,s) y^0 \, d\tau \\
+ \int_0^t \int_s^t U_z(t,\tau) B[\tau,z(\tau)] S U_z(\tau,s) f[s,z(s)] \, d\tau \, ds
\end{multline}
for every $t\in[0,T']$, 
and each of these four terms
is an element of $\mathscr{C}^0([0,T'],X)$ thanks to the various assumptions. The claim follows. 
In passing, note that, using the Fubini theorem in the fourth term at the right-hand side of \eqref{sumoffourterms}, the sum of the third and fourth terms is then equal to $\int_0^t U_z(t,\tau) B[\tau,z(\tau)] S y(\tau) \, d\tau$, and thus we get $Sy(\cdot) = \Psi_{T'}(z(\cdot),Sy(\cdot))$ where 
\begin{equation}\label{defPsiT}
\psi_{T'}(z(\cdot),x(\cdot))(t) = U_z(t,0)Sy^0 + \int_0^t U_z(t,s) S f[s,z(s)] \, ds 
+ \int_0^t U_z(t,\tau) B[\tau,z(\tau)] x(\tau) \, d\tau .
\end{equation}
This remark will be useful at the end of the proof.

Since $y(\cdot)=\Phi_{T'}(z(\cdot))\in\mathscr{C}^0([0,T'],Z)$, it follows that $\Phi_{T'}$ maps $\mathcal{S}_{T'}$ to $\mathcal{S}_{T'}$ if $T'$ is small enough.

Let us prove that $\Phi_{T'}$ is a contraction in $\mathscr{C}^0([0,T'],X)$ if $T'$ is small enough.
Since $U_{z_1}(t,s)-U_{z_2}(t,s) = -\int_s^t \frac{d}{dr} ( U_{z_1}(t,\tau) U_{z_2}(\tau,s) ) \, d\tau = \int_s^t U_{z_1}(t,\tau) ( A[\tau,z_1(\tau)] - A[\tau,z_2(\tau)] ) U_{z_2}(\tau,s) \, d\tau$, we infer from \ref{H_regZ}, \ref{E_estim_exp} and \ref{E_ode} that
\begin{equation}\label{Uz1z2}
\Vert (U_{z_1}(t,s)-U_{z_2}(t,s))\Vert_{L(Z,X)}
\leq C_4 M^2 T' e^{\vert\omega\vert T'} \Vert z_1(\cdot)-z_2(\cdot)\Vert_{\mathscr{C}^0([0,T'],X)} 
\end{equation}
for all $0\leq s\leq t\leq T'\leq T$ and all $z_1(\cdot),z_2(\cdot)\in\mathcal{S}_{T'}$. 
Applying \eqref{sol_y_z} to $y_1(\cdot)=\Phi_{T'}(z_1(\cdot))$ and $y_2(\cdot)=\Phi_{T'}(z_2(\cdot))$, we infer from \ref{H_f_Z}, \ref{H_f_Lip}, \ref{E_estim_exp} and \eqref{Uz1z2} that
$$
\Vert\Phi_{T'}(z_1(\cdot))-\Phi_{T'}(z_2(\cdot))\Vert_{\mathscr{C}^0([0,T'],X)} \leq T' C \Vert z_1(\cdot)-z_2(\cdot)\Vert_{\mathscr{C}^0([0,T'],X)} .
$$
for all $T'\in(0,T]$ and $z_1(\cdot),z_2(\cdot)\in\mathcal{S}_{T'}$, 
with $C= M e^{\vert\omega\vert T} ( C_7 + M C_4 + M T C_4 C_6 )$.
The contraction property follows by choosing $T'$ small enough.

In particular, $\Phi_{T'}$ is continuous in $\mathscr{C}^0([0,T'],X)$ norm. Hence, denoting by $\overline{\mathcal{S}}_{T'}$ the closure of $\mathcal{S}_{T'}$ in $\mathscr{C}^0([0,T'],X)$, $\Phi_{T'}$ maps the closed convex set $\overline{\mathcal{S}}_{T'}$ to itself and is a contraction, therefore it has a fixed point $y(\cdot)\in\overline{\mathcal{S}}_{T'}$ (in particular, $y(\cdot)\in\mathscr{C}^0([0,T'],X)$). 

It remains to prove that, actually, $y(\cdot)\in\mathcal{S}_{T'}$ (notably, $y(\cdot)\in\mathscr{C}^0([0,T'],Z)$). 
Note that $\overline{\mathcal{S}}_{T'} = \mathcal{S}_{T'}$ when $X$ and $Z$ are reflexive, so the following argument (developed in \cite{Kato_1993}) is only required in the absence of reflexivity.
Defining $y_0(\cdot)\in \mathscr{C}^0([0,T'],X)$ by $y_0(t)=y^0$ for any $t$, the fixed point $y(\cdot)$ is obtained as the limit in $\mathscr{C}^0([0,T'],X)$ of the sequence $(y_k(\cdot))_{k\in\N}$ of $\overline{\mathcal{S}}_{T'}$ defined by iteration $y_{k+1}(\cdot) = \Phi_{T'}(y_k(\cdot))$. 
Using the map $\Psi_{T'}$ defined by \eqref{defPsiT}, we therefore have $Sy_k(\cdot) = \Psi_{T'}(y_{k-1}(\cdot),Sy_k(\cdot))$ for every $k\in\N^*$.
It follows from \ref{H_intertwining} and \eqref{Uz1z2} that $\Psi_{T'}$ maps continuously $\overline{\mathcal{S}}_{T'}\times E$ to $E$, where $E$ is a closed ball of $\mathscr{C}^0([0,T'],X)$, of center $0$ and of sufficiently large radius, and moreover $\Psi_{T'}$ is contracting with respect to $x(\cdot)$ if $T'$ is chosen small enough. Let us prove that $(Sy_k(\cdot))_{k\in\N}$ is a Cauchy sequence in $\mathscr{C}^0([0,T'],X)$: this is then enough to conclude because it implies that $(y_k(\cdot))_{k\in\N}$ is a Cauchy sequence in $\mathscr{C}^0([0,T'],Z)$, hence it converges and the limit must be $y(\cdot)$.
Let $w(\cdot)\in E$ be such that $w(\cdot) = \Psi_{T'}(y(\cdot),w(\cdot))$ (it exists by the Banach fixed-point theorem). By the triangular inequality, we have
\begin{multline*}
\Vert Sy_k(\cdot)-w(\cdot)\Vert_{\mathscr{C}^0([0,T'],X)}
\leq \Vert \Psi_{T'}(y_{k-1}(\cdot),Sy_k(\cdot)) - \Psi_{T'}(y_{k-1}(\cdot),w(\cdot))\Vert_{\mathscr{C}^0([0,T'],X)} \\
+ \Vert \Psi_{T'}(y_{k-1}(\cdot),w(\cdot)) - \Psi_{T'}(y(\cdot),w(\cdot))\Vert_{\mathscr{C}^0([0,T'],X)} .
\end{multline*}
The first term at the right-hand side is less than $C\Vert Sy_k(\cdot)-w(\cdot)\Vert_{\mathscr{C}^0([0,T'],X)}$ for some $C>0$ because $\Psi_{T'}$ is contracting, and the second term converges to $0$ as $k\rightarrow+\infty$ by continuity of $\Psi_{T'}$. It follows that $Sy_k(\cdot)$ converges to $w(\cdot)$, which finishes the proof.
\end{proof}

\begin{remark}\label{rem_explicit}
When $A[t,y]=A$ does not depend on $(t,y)$ and generates a $C_0$ semigroup $(e^{tA})_{t\geq 0}$, we are in the very classical framework of semigroup theory (see \cite{EngelNagel,Pazy}). Assumptions \ref{H_Z} to \ref{H_intertwining} are satisfied with $Z=D(A)$, $S=A$, $B=0$, and we have $U(t,s)=e^{(t-s)A}$.

When $A[t,y]=A(t)$ does not depend on $y$, but depends on $t$, we are in the framework of linear evolution equations, treated for example in \cite[Chapter 5]{Pazy}. 

In these two cases where the operator does not depend on $y$, the Duhamel formula \eqref{duhamel_implicit} is explicit, because $U_y(t,s)$ does not depend on $y$.

\smallskip
We speak of a \emph{quasilinear} evolution equation when $A[t,y]$ depends on $y$. Then, $U_y(t,s)$ depends on $y(\cdot)$ and the Duhamel formula \eqref{duhamel_implicit} is implicit with respect to $y$.

The quasilinear theory for evolution systems has been considered and developed by many authors. Here, we have followed the presentation and assumptions done in \cite{Kato_1975, Kato_1993, Sanekata_1989} (see also \cite[Section 6.4]{Pazy}).
The framework covers, among many others, the following equations: 
Burgers, Korteweg-de Vries, 
hyperbolic systems of quasilinear partial differential equations of the first order,
Euler and Navier Stokes (incompressible) in $\R^3$, coupled Maxwell-Dirac, quasilinear waves, magnetohydrodynamics (including compressible fluids). 

An even more general theory exists, initiated by Kato in \cite{Kato_1967} with the notion of nonlinear semigroup, developed in the 70s with the famous Crandall-Liggett generation theorem (see \cite{CrandallLiggett}) or within maximal monotone operators (see \cite{Brezis_1973}). We refer to the unpublished book manuscript \cite{Benilan} and to the textbook \cite{ItoKappel} for a complete theory. 
For example, nonlinear semigroup theory covers the porous medium equation $\partial_t y = \triangle\varphi(u)$ and Hamilton-Jacobi equations, which are not covered by the quasilinear evolution equations theory.

Our choice of staying anyway in the quasilinear framework is motivated by its simplicity, by the fact that it already covers most of usual PDEs, and more technically, by the fact that the (implicit) Duhamel formula \eqref{duhamel_implicit} will be instrumental in deriving our main result, Theorem \ref{thm_approx_abstract}, in Section \ref{sec_abstract_main_result}.
\end{remark}

\subsection{A general approximation result}\label{sec_abstract_approx}
We assume that, for all $(t,z)\in[0,T]\times B_Z(y^0,r)$: 
\begin{enumerate}[label=$(H_{\arabic*})$] 
\setcounter{enumi}{7}
\item\label{H_Aepsilon} 
There exists a family of linear operators $A_\varepsilon[t,z]$ on $X$, indexed by $\varepsilon\in(0,1]$, satisfying Assumptions \ref{H_stab} to \ref{H_intertwining} (with $A$ replaced by $A_\varepsilon$) uniformly with respect to $\varepsilon$.
\item\label{H_Aepsilon_CV} There exists a Banach subspace $\hat Z$ of $Z$, dense in $Z$ and continuously embedded in $Z$, i.e., there exists $C_{\hat Z}>0$ such that $\Vert z\Vert_Z\leq C_{\hat Z}\Vert z\Vert_{\hat Z}$ for every $z\in\hat Z$,
and there exist $C_A>0$ and a continuous function $\chi_A:[0,1]\rightarrow[0,+\infty)$, satisfying $\chi_A(0)=0$, such that
$$
\Vert A_\varepsilon[t,z]-A[t,z]\Vert_{L(\hat Z,X)}\leq C_A \chi_A(\varepsilon)  \qquad\forall t\in[0,T] \qquad \forall z\in B_Z(y^0,r) \qquad\forall\varepsilon\in(0,1] .
$$
%
\end{enumerate}

We do not assume, for the moment, that $A_\varepsilon[t,z]$ is bounded, but in the next subsection we will focus on bounded approximations.
An example of bounded approximation operator $A_\varepsilon[t,z]$ satisfying Assumption \ref{H_Aepsilon}, not explicit but fully general, is given by the \emph{Yosida approximant} 
$$
A_\varepsilon[t,z] = J_\varepsilon[t,z] A[t,z] \qquad\textrm{where}\qquad J_\varepsilon[t,z] = \left( \mathrm{id}-\varepsilon A[t,z] \right)^{-1} ,
$$
which indeed satisfies, notably, the uniform stability estimate (see \cite{EngelNagel, Pazy}). 
Assumption \ref{H_Aepsilon_CV} refers to convergence estimates, which are often proved by explicit approximation constructions (see also \cite{ItoKappel_MC1998} for finite-dimensional approximations with error estimates), as we will do hereafter. 
For instance when $X=L^2(\Omega,\R^d)$, the Banach space $\hat Z$ may be a subspace of functions of $X$ having a certain number of bounded derivatives.

All in all, assuming \ref{H_stab} for $A_\varepsilon[t,z]$, uniformly with respect to $\varepsilon$, is the most stringent hypothesis. It is however classical in the Trotter-Kato theorem. It is usually established in practice by means of dissipativity properties, and this is also what we will do in the explicit construction hereafter. 

\medskip

Finally, we also assume that: 
\begin{enumerate}[label=$(H_{\arabic*})$, resume] 
\item\label{H_fepsilon} There exists a family of functions $f_\varepsilon:[0,T]\times X\rightarrow X$, indexed by $\varepsilon\in(0,1]$, satisfying Assumptions \ref{H_f_Z} and \ref{H_f_Lip} (with $f$ replaced by $f_\varepsilon$) uniformly with respect to $\varepsilon$.
\item\label{H_fepsilon_CV} There exist $C_f>0$ and a continuous function $\chi_f:[0,1]\rightarrow[0,+\infty)$, satisfying $\chi_f(0)=0$, such that
$$
\Vert f_\varepsilon[t,z]-f[t,z]\Vert_X\leq C_f \chi_f(\varepsilon)  \qquad\forall t\in[0,T] \qquad \forall z\in B_Z(y^0,r) \qquad\forall\varepsilon\in(0,1] .
$$
\end{enumerate}

For every $\varepsilon\in(0,1]$, we consider the quasilinear evolution equation
\begin{equation}\label{abstract_quasilinear_eps}
\boxed{
\dot y_\varepsilon(t) = A_\varepsilon[t,y_\varepsilon(t)] \, y_\varepsilon(t) + f_\varepsilon[t,y_\varepsilon(t)]
}
\end{equation}

\begin{proposition}\label{prop_yepsilon}
Under \ref{H_Z} to \ref{H_fepsilon}:
\begin{itemize}
\item For every $\varepsilon\in(0,1]$, there exists a unique solution $y_\varepsilon(\cdot)\in\mathscr{C}^0([0,T'],Z)\cap\mathscr{C}^1([0,T'],X)$ of \eqref{abstract_quasilinear_eps} such that $y_\varepsilon(0)=y^0$, where $T'\in(0,T]$ is the same as for the solution $y(\cdot)$ considered in Section \ref{sec_abstract_setting} and does not depend on $\varepsilon$. 
\item If $y\in L^\infty([0,T'],\hat Z)$ then
\begin{equation}\label{CV_yeps_abstract}
\begin{split}
\Vert y_\varepsilon(t)-y(t)\Vert_X
& \leq M ( C_A \Vert y\Vert_{L^\infty([0,T'],\hat Z)} \chi_A(\varepsilon) + C_f \chi_f(\varepsilon) ) \int_0^t e^{(\omega + M(C_4 r + C_7))s}\, ds \\
&\leq \mathrm{Cst}(\chi_A(\varepsilon) + C_f \chi_f(\varepsilon))
\end{split}
\end{equation}
for all $t\in[0,T']$ and $\varepsilon\in(0,1]$.
\end{itemize}
\end{proposition}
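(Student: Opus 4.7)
The first assertion follows directly from Proposition \ref{prop_existence_uniqueness} applied to \eqref{abstract_quasilinear_eps}: Assumptions \ref{H_Aepsilon} and \ref{H_fepsilon} provide constants in \ref{H_stab}--\ref{H_f_Lip} that are uniform in $\varepsilon\in(0,1]$, and by Remark \ref{rem_prop_existence_uniqueness} the existence time $T'$ depends only on those constants, so the same $T'$ as in Section \ref{sec_abstract_setting} works for every $\varepsilon\in(0,1]$, with $y_\varepsilon(t)\in B_Z(y^0,r)$ for all $t\in[0,T']$.

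For the convergence estimate, my plan is to express both $y$ and $y_\varepsilon$ through the \emph{same} evolution system $U^\varepsilon_{y_\varepsilon}(t,s)$ generated on $X$ by $s\mapsto A_\varepsilon[s,y_\varepsilon(s)]$ (which exists by applying the results recalled in Section \ref{sec_abstract_setting} to the path $y_\varepsilon(\cdot)$ and the operator family $A_\varepsilon$). Rewriting \eqref{abstract_quasilinear} as
$$\dot y(t) = A_\varepsilon[t,y_\varepsilon(t)]\,y(t) + \bigl(A[t,y(t)]-A_\varepsilon[t,y_\varepsilon(t)]\bigr)y(t) + f[t,y(t)],$$
and applying variation of constants with $U^\varepsilon_{y_\varepsilon}$ (using \ref{E_ode} applied to $s\mapsto U^\varepsilon_{y_\varepsilon}(t,s)y(s)$), then subtracting the Duhamel formula \eqref{duhamel_implicit} for $y_\varepsilon$, yields
$$y_\varepsilon(t)-y(t) = \int_0^t U^\varepsilon_{y_\varepsilon}(t,s)\Bigl\{\bigl(A_\varepsilon[s,y_\varepsilon(s)]-A[s,y(s)]\bigr)y(s) + f_\varepsilon[s,y_\varepsilon(s)]-f[s,y(s)]\Bigr\}\,ds.$$

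I would then split each difference into a Lipschitz-in-$z$ piece and an approximation piece,
$$A_\varepsilon[s,y_\varepsilon(s)]-A[s,y(s)] = \bigl(A_\varepsilon[s,y_\varepsilon(s)]-A_\varepsilon[s,y(s)]\bigr) + \bigl(A_\varepsilon[s,y(s)]-A[s,y(s)]\bigr),$$
and analogously for $f_\varepsilon-f$. Applied to $y(s)$, the first piece of the $A$-split is bounded in $X$-norm by a constant times $\Vert y_\varepsilon(s)-y(s)\Vert_X$ using the uniform \ref{H_regZ}-Lipschitz estimate (together with the bound on $\Vert y(s)\Vert_Z$ coming from $y(s)\in B_Z(y^0,r)$), and the second piece by $C_A\chi_A(\varepsilon)\Vert y(s)\Vert_{\hat Z}$ thanks to \ref{H_Aepsilon_CV} combined with the hypothesis $y\in L^\infty([0,T'],\hat Z)$. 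The $f$-differences are controlled in the same way via \ref{H_f_Lip} (uniformly in $\varepsilon$) and \ref{H_fepsilon_CV}. Combining with the uniform semigroup bound $\Vert U^\varepsilon_{y_\varepsilon}(t,s)\Vert_{L(X)}\leq Me^{\omega(t-s)}$ given by \ref{E_estim_exp}, one obtains an integral inequality of Gronwall type whose resolution produces exactly \eqref{CV_yeps_abstract}.

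The key structural trick is the use of a single evolution system (the one associated with the $\varepsilon$-equation) for representing both solutions, which sidesteps the delicate task of comparing two different evolution systems and mirrors the classical method for nonlinear perturbations of linear semigroups. The only genuinely sensitive point in the estimates is that the error $A_\varepsilon-A$ is controlled by $\chi_A(\varepsilon)$ only in the $\hat Z\to X$ operator norm, which is exactly why the supplementary regularity hypothesis $y\in L^\infty([0,T'],\hat Z)$ is unavoidable; everything else reduces to uniform-in-$\varepsilon$ bookkeeping of the constants already provided by \ref{H_Aepsilon} and \ref{H_fepsilon}.
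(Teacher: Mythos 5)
Your proposal is correct and follows essentially the same route as the paper: it represents $y_\varepsilon-y$ via the evolution system generated by $s\mapsto A_\varepsilon[s,y_\varepsilon(s)]$, decomposes the operator and source errors into a Lipschitz-in-$z$ piece (controlled by \ref{H_regZ} and \ref{H_f_Lip}) and an $\varepsilon$-approximation piece (controlled by \ref{H_Aepsilon_CV}, \ref{H_fepsilon_CV} and the hypothesis $y\in L^\infty([0,T'],\hat Z)$), and concludes by Gronwall. Your explicit notation $U^\varepsilon_{y_\varepsilon}$ is in fact a welcome clarification of what the paper denotes $U_{y_\varepsilon}$.
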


Of course, when $\hat Z=Z$, the assumption that $y\in L^\infty([0,T'],\hat Z)$ is satisfied and we have $\Vert y\Vert_{L^\infty([0,T'],\hat Z)} \leq r$.

Proposition \ref{prop_yepsilon} is similar to \cite[Theorem 7]{Kato_1975} and \cite[Theorem III]{Kato_1993}, where the above convergence result is proved for $\hat Z=Z$ without convergence estimate. 
Proposition \ref{prop_yepsilon} can thus be seen as a slight improvement, as we quantify the convergence. This is also why we have added the freedom of a space $\hat Z\subset Z$, anticipating that stronger convergence estimates can be obtained if the solution shares more regularity properties.

Note that Assumptions \ref{H_Aepsilon_CV} and \ref{H_fepsilon_CV} are not required in the first item of Proposition \ref{prop_yepsilon}.

Finally, recalling Remark \ref{rem_prop_existence_uniqueness}, we underline that the time $T'$ depends, in particular, on the spaces $X$ and $Z$. 

\begin{proof}
The existence and uniqueness on the whole interval $[0,T']$ comes from Proposition \ref{prop_existence_uniqueness} and Remark \ref{rem_prop_existence_uniqueness}.
Given any $\varepsilon\in(0,1]$, writing that 
$\frac{d}{dt}(y_\varepsilon(t) - y(t)) 
= A_\varepsilon[t,y_\varepsilon(t)]\, (y_\varepsilon(t)-y(t)) + (A_\varepsilon[t,y_\varepsilon(t)]-A[t,y(t)])\, y(t) + f_\varepsilon[t,y_\varepsilon(t)]-f[t,y(t)]$ and integrating, we infer from the Duhamel formula \eqref{duhamel_implicit} that, for every $t\in[0,T]$,
\begin{multline*}
y_\varepsilon(t)-y(t) = \int_0^t U_{y_\varepsilon}(t,s)\, \Big( \big( A_\varepsilon[s,y_\varepsilon(s)] - A_\varepsilon[s,y(s)] \big)\, y(s) \\ + \big( A_\varepsilon[s,y(s)] - A[s,y(s)] \big)\, y(s) 
+ f_\varepsilon[t,y_\varepsilon(t)]-f[t,y(t)] \Big) \, ds .
\end{multline*}
Noting that $y_\varepsilon(s), y(s)\in B_Z(y^0,r)$ by Proposition \ref{prop_existence_uniqueness}, using the (uniform) stability estimates \ref{E_estim_exp} for $U_{y_\varepsilon}$, the (uniform) Lipschitz properties \ref{H_regZ} for $A_\varepsilon$ and \ref{H_f_Lip} for $f_\varepsilon$, the convergence estimates \ref{H_Aepsilon_CV} and \ref{H_fepsilon_CV}, and the assumption that $y\in L^\infty([0,T'],\hat Z)$, it follows that
$$
\Vert y_\varepsilon(t)-y(t)\Vert_X
\leq M \int_0^t e^{\omega(t-s)} \big( (C_4 r + C_7) \Vert y_\varepsilon(s)-y(s)\Vert_X + C_A \Vert y\Vert_{L^\infty([0,T'],\hat Z)} \chi_A(\varepsilon) + C_f \chi_f(\varepsilon) \big) \, ds
$$
and therefore, by the Gronwall lemma applied to $e^{-\omega t}\Vert y_\varepsilon(t)-y(t)\Vert_X$, we obtain finally \eqref{CV_yeps_abstract}.
\end{proof}

\subsection{Finite particle approximation system}\label{sec_abstract_particle_approx}
In order to design a finite particle approximation system, we do a last assumption.
Recall that, at the beginning of Section \ref{sec_abstract}, we have assumed that the Banach space $X=\mathcal{F}(\Omega,\R^d)$ is a space of functions on $\Omega$.
This is now important, and in what follows we are going to use the Banach space $L^\infty(\Omega,\R^d)$, sometimes denoted $L^\infty$ for short.
%
Recall that $(\Omega,\mathrm{d}_\Omega)$ is a complete metric space, endowed with a probability measure  $\nu\in\mathcal{P}(\Omega)$, and having a family of tagged partitions.

In what follows, the set $\R^{d\times d}$ of square real-valued matrices of size $d$ is equipped with the matrix norm $\Vert\ \Vert_{\R^{d\times d}}$ induced by the Euclidean norm $\Vert\ \Vert_{\R^d}$ on $\R^d$. 

Keeping the same $r$ as in the previous assumptions, now we moreover assume that:
\begin{enumerate}[label=$(H_{\arabic*})$, resume] 
\item\label{H_sigmaepsilon} For every $\varepsilon\in(0,1]$, for all $(t,z)\in[0,T]\times B_{L^\infty}(y^0,r)$, the operator $A_\varepsilon[t,z]$ is well defined and bounded on $L^\infty(\Omega,\R^d)$, i.e., $A_\varepsilon[t,z]\in L(L^\infty)$, and can be written as
$$
A_\varepsilon[t,z]\, y(x) = \int_\Omega \sigma_\varepsilon[t,z](x,x')\, y(x')\, d\nu(x')
\qquad\forall y\in L^\infty(\Omega,\R^d) 
\qquad \forall x\in\Omega 
$$
where the kernel $\sigma_\varepsilon[t,z](x,x')\in\R^{d\times d}$ depends continuously on $(t,z,x,x')\in [0,T]\times B_{L^\infty}(y^0,r)\times\Omega\times\Omega$, is bounded and Lipschitz continuous with respect to $(z,x,x')$, uniformly with respect to $t$, i.e.,
\begin{equation*}
\begin{split}
\Vert\sigma_\varepsilon[t,z](x,x'))\Vert_{\R^{d\times d}} & \leq \Vert\sigma_\varepsilon\Vert_\infty ,  \\
 \Vert\sigma_\varepsilon[t,z_1](x_1,x'_1)-\sigma_\varepsilon[t,z_2](x_2,x'_2)\Vert_{\R^{d\times d}} 
 &\leq \Lip(\sigma_\varepsilon)\left( \Vert z_1-z_2\Vert_{L^\infty} + \mathrm{d}_\Omega(x_1,x_2) + \mathrm{d}_\Omega(x'_1,x'_2) \right) ,
\end{split}
\end{equation*}
for all $t\in[0,T]$, $z_1,z_2\in B_{L^\infty}(y^0,r)$ and $x,x',x_1,x_2,x'_1,x'_2\in\Omega$.
\item\label{H_fepsilon_Lip} $f_\varepsilon[t,z](x)$ depends continuously on $(t,z,x)\in [0,T]\times B_{L^\infty}(y^0,r)\times\Omega$, is bounded and Lipschitz continuous with respect to $(z,x)$, uniformly with respect to $t$, i.e.,
\begin{equation*}
\begin{split}
\Vert f_\varepsilon[t,z](x)\Vert_{\R^d} &\leq \Vert f_\varepsilon\Vert_\infty  ,  \\
\Vert f_\varepsilon[t,z_1](x_1)-f_\varepsilon[t,z_2](x_2)\Vert_{\R^d} 
&\leq \Lip(f_\varepsilon)\left( \Vert z_1-z_2\Vert_{L^\infty} + \mathrm{d}_\Omega(x_1,x_2) \right) ,
\end{split}
\end{equation*}
for all $t\in[0,T]$, $z_1,z_2\in B_{L^\infty}(y^0,r)$ and $x,x_1,x_2,x'_1,x'_2\in\Omega$.
\end{enumerate}

Assumption \ref{H_sigmaepsilon} is related to the regularity of the Schwartz kernel of $A_\varepsilon[t,z]$. For instance if $A[t,z]$ is a differential operator then \ref{H_sigmaepsilon} is satisfied by iterating the Yosida approximation, taking $A_\varepsilon[t,z]=J_\varepsilon^j A[t,z]$ for $j$ large enough. Of course, if $A[t,z]$ is unbounded then $\Vert\sigma_\varepsilon\Vert_\infty\rightarrow+\infty$ and $\Lip(\sigma_\varepsilon) \rightarrow+\infty$ as $\varepsilon\rightarrow 0$.

\medskip

Now, given any $\varepsilon\in(0,1]$, let us introduce the particle approximation of \eqref{abstract_quasilinear_eps}.
We use the family $(\mathcal{A}^N,x^N)_{N\in\N^*}$ of tagged partitions of $\Omega$ associated with $\nu$, satisfying \eqref{def_tagged}, with $\mathcal{A}^N=(\Omega_1^N,\ldots,\Omega_N^N)$ and $x^N=(x_1^N,\ldots,x_N^N)$.
Given any $N\in\N^*$, we consider the finite particle system 
\begin{equation}\label{particle_system_eps}
\boxed{
\dot\xi_{\varepsilon,i}^N(t) = \frac{1}{N}\sum_{j=1}^N \sigma_\varepsilon [t, y_\varepsilon^N(t)] (x_i^N,x_j^N) \, \xi_{\varepsilon,j}^N(t) + f_\varepsilon [t,y_\varepsilon^N(t)] (x_i^N)
\qquad \forall i\in\{1,\ldots,N\} 
}
\end{equation}
where $y_\varepsilon^N(t)\in L^\infty(\Omega,\R^d)$ is the piecewise function on $\Omega$ defined by
\begin{equation}\label{defyepsN}
\boxed{
y_\varepsilon^N(t,x) = \sum_{i=1}^N \xi_{\varepsilon,i}^N(t) \, \mathds{1}_{\Omega_i^N}(x) 
}
\end{equation}
where $\mathds{1}_{\Omega_i^N}$ is the characteristic function of $\Omega_i^N$, defined by $\mathds{1}_{\Omega_i^N}(x)=1$ if $x\in\Omega_i^N$ and $0$ otherwise. 
As said for $y$ at the beginning of Section \ref{sec_abstract}, we denote indifferently $y_\varepsilon^N(t)(x)=y_\varepsilon^N(t,x)$.
In particular, we have $y_\varepsilon^N(t,x_i^N)=\xi_{\varepsilon,i}^N(t)$ for every $i\in\{1,\ldots,N\}$. 

\begin{proposition}\label{prop_yepsilonN}
Assume that $y^0\in\mathscr{C}^0(\Omega,\R^d)$. For all $\varepsilon\in(0,1]$ and $N\in\N^*$, there exists a unique solution $t\mapsto\Xi_\varepsilon^N(t)=(\xi_{\varepsilon,1}^N(t),\ldots,\xi_{\varepsilon,N}^N(t))$ of \eqref{particle_system_eps} such that $\xi^N_i(0)=y^0(x^N_i)$ for every $i\in\{1,\ldots,N\}$, which is well defined on $[0,T']$ if $N$ is large enough. 
\end{proposition}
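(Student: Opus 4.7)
The plan is to view \eqref{particle_system_eps} as an ordinary differential equation in $\R^{Nd}$ for the unknown $\Xi_\varepsilon^N=(\xi_{\varepsilon,1}^N,\ldots,\xi_{\varepsilon,N}^N)$, apply the Cauchy--Lipschitz theorem for local well-posedness, and then extend the solution to the full interval $[0,T']$ through a continuation argument relying on the comparison with the continuous solution $y_\varepsilon$ of \eqref{abstract_quasilinear_eps} given by Proposition \ref{prop_yepsilon}.

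First, I would observe that the linear reconstruction map $\Xi_\varepsilon^N\mapsto y_\varepsilon^N=\sum_{i=1}^N\xi_{\varepsilon,i}^N\,\mathds{1}_{\Omega_i^N}$ from $\R^{Nd}$ (with the max norm) to $L^\infty(\Omega,\R^d)$ is isometric, satisfying $\Vert y_\varepsilon^N\Vert_{L^\infty}=\max_i\Vert\xi_{\varepsilon,i}^N\Vert_{\R^d}$. Combined with Assumptions \ref{H_sigmaepsilon} and \ref{H_fepsilon_Lip}, which provide boundedness and Lipschitz continuity of $\sigma_\varepsilon[t,\cdot]$ and $f_\varepsilon[t,\cdot]$ with respect to the $L^\infty$ variable on $B_{L^\infty}(y^0,r)$, this makes the right-hand side of \eqref{particle_system_eps} a Lipschitz function of $\Xi_\varepsilon^N$, uniformly in $t\in[0,T]$, so long as $y_\varepsilon^N(t)\in B_{L^\infty}(y^0,r)$. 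The Cauchy--Lipschitz theorem thus yields a unique maximal solution on some interval $[0,T_\varepsilon^N)$; moreover, the uniform bound $\Vert\dot\xi_{\varepsilon,i}^N(t)\Vert_{\R^d}\leq\Vert\sigma_\varepsilon\Vert_\infty\max_j\Vert\xi_{\varepsilon,j}^N(t)\Vert_{\R^d}+\Vert f_\varepsilon\Vert_\infty$ precludes finite-time blow-up as long as the solution remains in the ball, so $T_\varepsilon^N$ can only fail to reach $T'$ by exit from $B_{L^\infty}(y^0,r)$.

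It remains to show that, for $N$ sufficiently large, $y_\varepsilon^N(t)$ does not exit $B_{L^\infty}(y^0,r)$ on $[0,T']$. At $t=0$, the initial condition $\xi_{\varepsilon,i}^N(0)=y^0(x_i^N)$ gives
\[
\Vert y_\varepsilon^N(0)-y^0\Vert_{L^\infty(\Omega,\R^d)}\leq\max_{1\leq i\leq N}\sup_{x\in\Omega_i^N}\Vert y^0(x)-y^0(x_i^N)\Vert_{\R^d},
\]
which tends to $0$ as $N\rightarrow+\infty$ by (uniform) continuity of $y^0$ combined with the diameter bound $\mathrm{diam}_\Omega(\Omega_i^N)\leq C_\Omega/N^\gamma$ from \eqref{def_tagged}, so that $y_\varepsilon^N(0)\in B_{L^\infty}(y^0,r/2)$ for $N$ large. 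To propagate this inclusion in time, I would invoke Theorem \ref{thm_estim_graph} from Appendix \ref{app_thm_estim_graph}, which provides a convergence estimate $\Vert y_\varepsilon^N(t)-y_\varepsilon(t)\Vert_{L^\infty}\rightarrow 0$ as $N\rightarrow+\infty$, valid on every subinterval of $[0,T']$ on which $y_\varepsilon^N$ remains in $B_{L^\infty}(y^0,r)$. Since by Proposition \ref{prop_yepsilon} the solution $y_\varepsilon$ lies in $B_Z(y^0,r)$ on $[0,T']$ (with, in the intended applications, the continuous embedding $Z\hookrightarrow L^\infty$ placing $y_\varepsilon$ inside $B_{L^\infty}(y^0,r/2)$ upon possibly shrinking $r$), a triangular inequality keeps $y_\varepsilon^N(t)$ strictly inside $B_{L^\infty}(y^0,r)$ throughout $[0,T']$ for $N$ large.

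The main obstacle is the bootstrap character of this continuation, since both the local well-posedness of the ODE and the applicability of Theorem \ref{thm_estim_graph} require that $y_\varepsilon^N$ remain in the ball where $\sigma_\varepsilon,f_\varepsilon$ satisfy their Lipschitz bounds. I would handle it by setting $\tau_\varepsilon^N=\sup\{t\in[0,\min(T',T_\varepsilon^N)]\ :\ y_\varepsilon^N(s)\in B_{L^\infty}(y^0,r)\ \forall s\in[0,t]\}$, showing by the comparison estimate applied on $[0,\tau_\varepsilon^N]$ that $y_\varepsilon^N(\tau_\varepsilon^N)$ lies strictly inside $B_{L^\infty}(y^0,r)$ for $N$ large, and concluding by continuity and maximality that $\tau_\varepsilon^N=T'$ and hence $T_\varepsilon^N>T'$. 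A secondary point to verify is that $y_\varepsilon\in L^\infty$ (i.e., the embedding $Z\hookrightarrow L^\infty$, which holds, e.g., for $Z=H^s$ with $s>n/2$ or for $Z\subset\mathscr{C}^0$), without which the comparison cannot be closed in the $L^\infty$ norm.
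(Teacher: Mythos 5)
Your proof is essentially correct, but it goes by a somewhat different route than the paper. For the local step you treat \eqref{particle_system_eps} as what it is for fixed $N$, namely an ODE in $\R^{Nd}$, and invoke Cauchy--Lipschitz via the isometry $\Xi_\varepsilon^N\mapsto y_\varepsilon^N$ together with \ref{H_sigmaepsilon}--\ref{H_fepsilon_Lip}; the paper instead rewrites the particle system as the quasilinear integral evolution equation \eqref{eq_yN} with piecewise-constant kernel and applies the abstract Kato result (Proposition \ref{prop_existence_uniqueness} and Remark \ref{rem_prop_existence_uniqueness}) with $X=Z=L^\infty$, the point being that the constants, hence the local existence time, are uniform in $N$ (this is Proposition \ref{prop_existence_yN}); your route is more elementary and sidesteps the uniform-in-$N$ local time, since in your scheme the uniformity is recovered from the comparison estimate. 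For the prolongation to $[0,T']$ you then rebuild by hand exactly the bootstrap that the paper delegates to item \ref{item_prolong_yN} of Theorem \ref{thm_estim_graph} (maximal time in the ball, strict improvement of the bound for $N$ large, hence no exit), so the two arguments converge there. The one point to be more careful about, which you flag but do not fully close, is which continuum solution you compare with: Theorem \ref{thm_estim_graph} compares $y_\varepsilon^N$ with the $L^\infty$-solution of \eqref{abstract_quasilinear_eps}, whose existence time $T'_{L^\infty,L^\infty}$ and invariant ball $B_{L^\infty}(y^0,r)$ are a priori not those of the $(X,Z)$-solution $y_\varepsilon$ of Proposition \ref{prop_yepsilon}; identifying the two (by uniqueness, under $Z\hookrightarrow L^\infty$) and ensuring $y_\varepsilon(t)\in B_{L^\infty}(y^0,r/2)$ on all of $[0,T']$ needs more than ``possibly shrinking $r$'' stated in passing --- this is precisely the ``important issue'' about the two existence times that the paper's proof singles out and resolves by working with the $L^\infty$ solution directly. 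With that point made precise, your argument is a valid, slightly more self-contained alternative to the paper's reduction to Appendix \ref{app_abstract_quasilinear_integral}.
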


\begin{proof}
We are in the framework of Appendix \ref{app_abstract_quasilinear_integral}. Indeed, thanks to \ref{H_sigmaepsilon} and \ref{H_fepsilon_Lip}, for any fixed $\varepsilon$, the approximation evolution equation \eqref{abstract_quasilinear_eps} is a quasilinear integral evolution equation of the form \eqref{abstract_quasilinear_integral} in Appendix \ref{app_abstract_quasilinear_integral_setting}, and the finite particle system \eqref{particle_system_eps} is of the form \eqref{particle_system} in Appendix \ref{app_prop_existence_yN}.

There is however an important issue to be addressed. As already underline, the time $T'$ in Propositions \ref{prop_existence_uniqueness} and \ref{prop_yepsilon} depends, in particular, on the Banach spaces in which the equations are settled. We have first applied these propositions in the Banach spaces $X$ and $Z$, and obtained a time $T'$ of existence, that we denote by $T'_{X,Z}$ as in Remark \ref{rem_prop_existence_uniqueness}, for the solution $y$ of \eqref{abstract_quasilinear} and for the solution $y_\varepsilon$ of \eqref{abstract_quasilinear_eps}, and $T'_{X,Z}$ does not depend on $\varepsilon$.
Second, following Appendix \ref{app_abstract_quasilinear_integral}, for any $\varepsilon$ fixed, we apply Proposition \ref{prop_existence_uniqueness} to the approximation evolution equation \eqref{abstract_quasilinear_eps} in the Banach space $L^\infty(\Omega,\R^d)$ (i.e., with $X=Z=L^\infty$); we thus obtain another time of existence, denoted $T'_{L^\infty,L^\infty}$.

\end{proof}

Then, Proposition \ref{prop_yepsilonN} follows from Proposition \ref{prop_existence_yN} in Appendix \ref{app_prop_existence_yN}, the nontrivial fact being the well-posedness on the whole time interval $[0,T']$ if $N$ is large enough. 
Note that, in the classical semigroup case where $A[t,z]=A$, the finite particle system \eqref{particle_system_eps} is linear and solutions are defined on $\R$.


The particle system \eqref{particle_system_eps} is expected to provide a finite particle approximation of the quasilinear evolution equation \eqref{abstract_quasilinear}, in the sense that solutions $y$ of \eqref{abstract_quasilinear} are expected to be limits of $y_\varepsilon^N$ as $N\rightarrow+\infty$ and $\varepsilon\rightarrow 0$.
However, since the particle system \eqref{particle_system_eps} does not have any (classical) limit as $\varepsilon\rightarrow 0$, in order to derive convergence estimates we will have to let $N$ tend to $+\infty$ and $\varepsilon$ to $0$ at some appropriate scale. 
We therefore have to track the constants carefully in all estimates. This is what is done in Theorem \ref{thm_estim_graph} in Appendix \ref{app_thm_estim_graph}.

\subsection{Main result}\label{sec_abstract_main_result}
We work under Assumptions \ref{H_Z} to \ref{H_sigmaepsilon}, so that the results of Propositions \ref{prop_existence_uniqueness}, \ref{prop_yepsilon} and \ref{prop_yepsilonN} are available. 

\begin{theorem}\label{thm_approx_abstract}
Let $y^0\in Z\cap\Lip(\Omega,\R^d)$, and
let $y(\cdot)\in\mathscr{C}^0([0,T'],Z)\cap\mathscr{C}^1([0,T'],X)$ be the unique solution of \eqref{abstract_quasilinear} such that $y(0)=y^0$ (see Proposition \ref{prop_existence_uniqueness}). We assume that $y\in L^\infty([0,T'],\hat Z)$.

For any $\varepsilon\in(0,1]$ and any $N\in\N^*$ large enough, let $t\mapsto\Xi_\varepsilon^N(t)=(\xi_{\varepsilon,1}^N(t),\ldots,\xi_{\varepsilon,N}^N(t))$ be the unique solution of \eqref{particle_system_eps} on $[0,T']$ such that $\xi_{\varepsilon,i}^N(0)=y^0(x_i^N)$ for every $i\in\{1,\ldots,N\}$ (see Proposition \ref{prop_yepsilonN}), and let $y_\varepsilon^N$ be defined by \eqref{defyepsN}.
We set
\begin{equation*}
\begin{split}
a_1^\varepsilon &= C_\Omega\Lip(y^0) + \frac{C_\Omega}{a_3^\varepsilon} \left( 2\Lip(\sigma_\varepsilon)(r+\Vert y^0\Vert_{L^\infty}) + \Lip(f_\varepsilon) + \Vert\sigma_\varepsilon\Vert_\infty \Lip(y^0) \right),   \\
a_2^\varepsilon &= \frac{\Vert\sigma_\varepsilon\Vert_\infty}{a_3^\varepsilon} \left(  \Lip(\sigma_\varepsilon) r + \Lip(f) \right), \qquad
a_3^\varepsilon=\Lip(\sigma_\varepsilon)(r+\Vert y^0\Vert_{L^\infty}) + \Vert\sigma_\varepsilon\Vert_\infty + \Lip(f_\varepsilon) .
\end{split}
\end{equation*}
\begin{enumerate}[leftmargin=5mm,label=(\roman*)]
\item\label{abstracti} If there exists a continuous and dense embedding $L^\infty(\Omega,\R^d)\hookrightarrow X$, i.e., if there exists $C_\infty>0$ such that $\Vert z\Vert_X \leq C_\infty\Vert z\Vert_{L^\infty}$ for any $z\in L^\infty(\Omega,\R^d)$, then
\begin{multline}\label{thm_approx_abstract_estimate_1}
\Vert y_\varepsilon^N(t) - y(t) \Vert_X
\leq M ( C_A \Vert y\Vert_{L^\infty([0,T'],\hat Z)} \chi_A(\varepsilon) + C_f \chi_f(\varepsilon) ) \int_0^t e^{(\omega + M(C_4 r + C_7))s}\, ds \\
+ \frac{C_\infty}{N^\gamma} (a_1^\varepsilon+a_2^\varepsilon t) e^{a_3^\varepsilon t} 
\end{multline}
for all $t\in[0,T']$, $\varepsilon\in(0,1]$ and $N\in\N^*$ large enough.
\item\label{abstractii} If there exists a continuous and dense embedding $X\hookrightarrow L^\infty(\Omega,\R^d)$, i.e., if there exists $C_\infty>0$ such that $\Vert z\Vert_{L^\infty} \leq C_\infty\Vert z\Vert_X$ for any $z\in X$, then
\begin{multline}\label{thm_approx_abstract_estimate_2}
\Vert y_\varepsilon^N(t) - y(t) \Vert_{L^\infty}
\leq C_\infty M ( C_A \Vert y\Vert_{L^\infty([0,T'],\hat Z)} \chi_A(\varepsilon) + C_f \chi_f(\varepsilon) ) \int_0^t e^{(\omega + M(C_4 r + C_7))s}\, ds \\
+ \frac{1}{N^\gamma} (a_1^\varepsilon+a_2^\varepsilon t) e^{a_3^\varepsilon t} 
\end{multline}
for all $t\in[0,T']$, $\varepsilon\in(0,1]$ and $N\in\N^*$ large enough.
\end{enumerate}
\end{theorem}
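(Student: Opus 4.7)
My plan is to establish the estimate by the triangular inequality
\[
\Vert y_\varepsilon^N(t)-y(t)\Vert \leq \Vert y_\varepsilon^N(t)-y_\varepsilon(t)\Vert + \Vert y_\varepsilon(t)-y(t)\Vert,
\]
using the auxiliary solution $y_\varepsilon$ of \eqref{abstract_quasilinear_eps} with $y_\varepsilon(0)=y^0$ as an intermediate. The second term is directly controlled by Proposition \ref{prop_yepsilon}, which applies because $y\in L^\infty([0,T'],\hat Z)$ by assumption; this contributes precisely the first line of the right-hand sides of \eqref{thm_approx_abstract_estimate_1} and \eqref{thm_approx_abstract_estimate_2} (the latter up to a factor $C_\infty$ coming from the embedding $X\hookrightarrow L^\infty$).

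For the first term, the object $y_\varepsilon^N$ is, by construction, the piecewise-constant representative on the tagged partition $(\mathcal{A}^N,x^N)$ of the solution of the finite particle system \eqref{particle_system_eps}, which itself arises as the Riemann-sum discretization of the quasilinear integral evolution equation \eqref{abstract_quasilinear_eps} (with bounded kernel $\sigma_\varepsilon$, thanks to \ref{H_sigmaepsilon}, and bounded source $f_\varepsilon$, thanks to \ref{H_fepsilon_Lip}). This is exactly the situation addressed by Theorem \ref{thm_estim_graph} in the appendix, which, after one checks the standing hypotheses (Lipschitz constants and $L^\infty$-bounds of $\sigma_\varepsilon$ and $f_\varepsilon$, Lipschitz regularity of $y^0$, and the diameter bound \eqref{def_tagged} on the partition), yields an estimate of the form
\[
\Vert y_\varepsilon^N(t)-y_\varepsilon(t)\Vert_{L^\infty(\Omega,\R^d)} \leq \frac{1}{N^\gamma}(a_1^\varepsilon + a_2^\varepsilon t)\, e^{a_3^\varepsilon t},
\]
valid on $[0,T']$ provided $N$ is large enough so that the particle solution stays in the ball $B_{L^\infty}(y^0,r)$ where the fixed-point construction runs (as discussed at the end of the proof of Proposition \ref{prop_yepsilonN}). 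The explicit expressions of $a_1^\varepsilon,a_2^\varepsilon,a_3^\varepsilon$ are tailored to match those produced by Theorem \ref{thm_estim_graph}, so one just has to identify the constants.

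It then remains to transfer this $L^\infty$-estimate into the correct norm. In case \ref{abstracti}, the embedding $L^\infty(\Omega,\R^d)\hookrightarrow X$ gives $\Vert y_\varepsilon^N(t)-y_\varepsilon(t)\Vert_X \leq C_\infty \Vert y_\varepsilon^N(t)-y_\varepsilon(t)\Vert_{L^\infty}$, producing the second line of \eqref{thm_approx_abstract_estimate_1}. In case \ref{abstractii}, the embedding goes the other way: the particle term is already in $L^\infty$ and enters \eqref{thm_approx_abstract_estimate_2} as is, while we apply $C_\infty$ to the $X$-estimate of Proposition \ref{prop_yepsilon} to get the $L^\infty$-estimate of $y_\varepsilon-y$. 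Summing the two contributions yields \eqref{thm_approx_abstract_estimate_1} and \eqref{thm_approx_abstract_estimate_2} respectively.

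The main subtlety, rather than any deep obstacle, is bookkeeping on the time of existence and on the norms: one must check that $y$ and $y_\varepsilon$ live in $B_Z(y^0,r)$ on a common interval $[0,T']$ independent of $\varepsilon$ (Remark \ref{rem_prop_existence_uniqueness} and Proposition \ref{prop_yepsilon}) while the particle solution lives in $B_{L^\infty}(y^0,r)$ on the same interval for $N$ large enough (reconciling $T'_{X,Z}$ and $T'_{L^\infty,L^\infty}$ as flagged in the proof of Proposition \ref{prop_yepsilonN}); and one must track carefully the dependence of the constants $a_1^\varepsilon,a_2^\varepsilon,a_3^\varepsilon$ on $\varepsilon$ through $\Vert\sigma_\varepsilon\Vert_\infty$ and $\Lip(\sigma_\varepsilon)$, since these blow up as $\varepsilon\to 0$ whenever $A$ is unbounded, and the interplay of this blow-up with the $\chi_A(\varepsilon),\chi_f(\varepsilon)$ convergence will dictate the admissible scaling between $N$ and $\varepsilon$ when the result is subsequently applied.
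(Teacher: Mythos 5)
Your proposal is correct and follows essentially the same route as the paper's proof: the triangular inequality through the intermediate solution $y_\varepsilon$, with the term $\Vert y_\varepsilon(t)-y(t)\Vert$ controlled by Proposition \ref{prop_yepsilon} (estimate \eqref{CV_yeps_abstract}), the term $\Vert y_\varepsilon^N(t)-y_\varepsilon(t)\Vert_{L^\infty}$ controlled by Theorem \ref{thm_estim_graph} of Appendix \ref{app_thm_estim_graph}, and the embedding constant $C_\infty$ used to pass between the $X$ and $L^\infty$ norms in cases \ref{abstracti} and \ref{abstractii}. Your remarks on reconciling the times of existence and tracking the $\varepsilon$-dependence of the constants match the paper's own comments, so nothing further is needed.
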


\begin{proof}
%
%
The proof, which is easy, is done in three steps.

%
%

As a first step, we apply Proposition \ref{prop_yepsilon}, which establishes that $y_\varepsilon$ converges to $y$, with the convergence estimate \eqref{CV_yeps_abstract}.

As a second step, given any fixed $\varepsilon\in(0,1]$, we apply Theorem \ref{thm_estim_graph} of Appendix \ref{app_thm_estim_graph} to prove that $y_\varepsilon^N$ converges to $y_\varepsilon$, with the convergence estimate
\begin{equation}\label{estim_yepsilon_yNepsilon}
\Vert y_\varepsilon(t,x)-y_\varepsilon^N(t,x)\Vert_{\R^d} 
\leq \frac{1}{N^\gamma} (a_1^\varepsilon+a_2^\varepsilon t) e^{a_3^\varepsilon t} 
\end{equation}
for all $(t,x)\in[0,T']\times\Omega$ and $N\in\N^*$ large enough.

As a third step, we conclude by the triangular inequality, as follows.
In case \ref{abstracti}, we have $\Vert y_\varepsilon^N(t) - y_\varepsilon(t,\cdot) \Vert_{X} \leq C_\infty \Vert y_\varepsilon^N(t) - y_\varepsilon(t) \Vert_{L^\infty}$.
Using the triangular inequality, 
we infer \eqref{thm_approx_abstract_estimate_1} from \eqref{CV_yeps_abstract} and \eqref{estim_yepsilon_yNepsilon}.
The argument is similar in case \ref{abstractii}.
%
%
\end{proof}

\begin{remark}[Comments on Theorem \ref{thm_approx_abstract}.]\label{rem_comments_thm_approx_abstract}
To illustrate and understand the convergence estimates \eqref{thm_approx_abstract_estimate_1} and \eqref{thm_approx_abstract_estimate_2}, let us assume that $\chi_A(\varepsilon) + \chi_f(\varepsilon)\sim\varepsilon$ and that $\Vert\sigma_\varepsilon\Vert_\infty+\Lip(\sigma_\varepsilon)\sim\frac{1}{\varepsilon^k}$ for some $k\in\N^*$ (this will be the case in the explicit construction hereafter), as $\varepsilon\rightarrow 0$.
Then, ignoring constants, the right-hand side of \eqref{thm_approx_abstract_estimate_1} or \eqref{thm_approx_abstract_estimate_2} is of the order of 
$$
\varepsilon + \frac{1}{N^\gamma} \frac{e^{1/\varepsilon^k}}{\varepsilon^k} .
$$
In order to pass to the limit as $N\rightarrow+\infty$ and $\varepsilon\rightarrow 0$, it is appropriate to choose parameters such that this term tends to $0$.
An optimization argument shows that the best choice for $\varepsilon$ in function of $N$ is
$
\varepsilon_N \sim 1/(\ln N)^{1/k}
$
as $N\rightarrow+\infty$, and in this case the estimate \eqref{thm_approx_abstract_estimate_1}, applied, typically, with $X=L^2(\Omega,\R^d)$ gives
$$
\Vert y_{\varepsilon_N}^N(t) - y(t) \Vert_{L^2}
\leq \frac{\mathrm{Cst}}{(\ln N)^{1/k}}  .
$$
%
The above general estimate can certainly be improved under additional assumptions, for example, taking into account the physical context like energy conservation properties. Indeed, the estimate \eqref{estim_yepsilon_yNepsilon}, yielded by Theorem \ref{thm_estim_graph} of Appendix \ref{app_thm_estim_graph}, is obtained under general assumptions. 
\end{remark}

\begin{remark}
As a consequence of the first item of Theorem \ref{thm_estim_graph} of Appendix \ref{app_thm_estim_graph}, we could relax in Theorem \ref{thm_approx_abstract} the assumption that $y^0\in\Lip(\Omega,\R^d)$ to $y^0\in\mathscr{C}^0(\Omega,\R^d)$, but in this case we would get weaker estimates \eqref{thm_approx_abstract_estimate_1} and \eqref{thm_approx_abstract_estimate_2}: the term $\frac{1}{N^\gamma}$ is replaced by a $\mathrm{o}(1)$ as $N\rightarrow+\infty$, and we cannot do the diagonal convergence argument of Remark \ref{rem_comments_thm_approx_abstract}. This is why tracking convergence estimates is crucial in our analysis. 
\end{remark}

\section{Application to quasilinear PDEs}\label{sec_PDE}
\subsection{A general class of quasilinear PDEs}
Let us first define a domain $\Omega$, equipped with a metric $\mathrm{d}_\Omega$ and with a probability measure $\nu$, for setting a PDE on it, in view, then, of considering particles evolving in $\Omega$, approximating the solutions of this evolution equation. Let $n\in\N^*$. In the sequel we assume either that:
\begin{enumerate}[label=$(O_\arabic*)$, topsep=2mm]
\item\label{O1} $\Omega$ is the compact closure of a bounded open subset of $\R^n$ with a Lipschitz boundary, $\mathrm{d}_\Omega$ is the induced Euclidean distance, $\nu$ is the Lebesgue measure on $\Omega$ and (without loss of generality) the Lebesgue volume of $\Omega$ is equal to $1$;
\end{enumerate}
or that:
\begin{enumerate}[label=$(O_\arabic*)$,resume, topsep=2mm]
\item\label{O2} $\Omega$ is a smooth compact Riemannian manifold of dimension $n$ (without boundary), $\mathrm{d}_\Omega$ is its Riemannian distance, and $\nu$ is the canonical Riemannian probability measure.
\end{enumerate}
In the case \ref{O1}, $\Omega$ is usually called a Lipschitz compact domain of $\R^n$. 
In the case \ref{O2}, for example $\Omega$ may be the sphere or the torus of dimension $n$.
In what follows, in both cases, in spite of a slight ambiguity in case \ref{O2}, to simplify the notations an integral $\int_\Omega f\, d\nu$ will be denoted $\int_\Omega f(x)\, dx$; it is thus understood that $\int_\Omega 1\, dx = 1$.

Under \ref{O1} or \ref{O2}, there always exist families of tagged partitions associated with $\nu$ satisfying \eqref{def_tagged} with $\gamma=1/n$. 


\medskip

In local coordinates $x$ on $\Omega$, we denote $D^\alpha = \partial_1^{\alpha_1}\cdots\partial_n^{\alpha_n}$ where $\partial_i$ is the partial derivative with respect to the $i^\textrm{th}$ variable of $x$ (which we do not denote by $x_i$ because the notation is already used for the tagged partitions), where $\alpha=(\alpha_1,\ldots,\alpha_n)\in\N^n$ and we set $\vert\alpha\vert=\sum_{i=1}^n\alpha_i$.

Let $p\in\N^*$ and $T>0$ be arbitrary. 
Throughout the section, 
we assume that $X=L^2(\Omega,\R^d)$ 
and that $Z\subset H^s(\Omega,\R^d)$ for some $s\in\R$ large enough, to be chosen. 
We consider the quasilinear partial differential system
\begin{equation}\label{general_quasilinear_PDE}
\boxed{
\partial_t y(t,x) 
= \sum_{\vert\alpha\vert\leq p} a_\alpha[t,y(t)](x) \, D^\alpha y(t,x)+f[t,y(t)](x)
}
\end{equation}
where $y(t,x)\in\R^d$, with some prescribed conditions at the boundary of $\Omega$ when $\Omega$ has a boundary (they are involved in the definition of the domain of the operator), and with an initial condition $y(0)=y^0\in Z$. 
Here, for every $\alpha\in\N^k$ such that $\vert\alpha\vert\leq p$, 
for all $(t,z)\in[0,T]\times Z$, $a_\alpha[t,z]$ is a function on $\Omega$, taking its values in the set $\R^{d\times d}$ of real square matrices of size $d$. 
The system \eqref{general_quasilinear_PDE} is of the form \eqref{abstract_quasilinear} with 
$$
A[t,z] = \sum_{\vert\alpha\vert\leq p} a_\alpha[t,z](\cdot) \, D^\alpha  . 
$$
We assume that 
there exists $r>0$ such that, for all $(t,z)\in[0,T]\times B_Z(y^0,r)$:
\begin{itemize}
\item The operator $A[t,z]$ on $X=L^2(\Omega,\R^d)$ is defined on a domain $D(A[t,z])\subset H^p(\Omega,\R^d)$, dense in $L^2(\Omega,\R^d)$, which may encode some Dirichlet or Neumann like boundary conditions, maybe of higher order.
\item $s$ is large enough so that $Z\subset D(A[t,z])$ (a necessary condition is that $s\geq p$);
\item $s>d/2$, so that $Z\hookrightarrow L^\infty(\Omega,\R^d)$ by Sobolev embedding.
\item $Z$ is such that \ref{H_S} and \ref{H_intertwining} are satisfied.
\item For every $\alpha\in\N^n$ such that $\vert\alpha\vert\leq p$, $a_\alpha[t,z](x)\in\R^{d\times d}$ depends continuously on $(t,z,x)\in[0,T]\times B_Z(y^0,r)\times\Omega$ and is bounded and Lipschitz continuous with respect to $(z,x)$, uniformly with respect to $t$, i.e., there exists $C_a>0$ such that
\begin{equation*}
\begin{split}
\Vert a_\alpha[t,z](x)\Vert_{\R^{d\times d}} &\leq C_a , \\
\Vert a_\alpha[t,z_1](x_1)-a_\alpha[t,z_2](x_2)\Vert_{\R^{d\times d}} 
&\leq C_a \left( \Vert z_1-z_2\Vert_X + d_\Omega(x_1,x_2) \right) ,
\end{split}
\end{equation*}
for all $t\in[0,T]$, $z_1,z_2\in B_Z(y^0,r)$ and $x,x_1,x_2\in\Omega$.
\item $A[t,z]-\omega\,\mathrm{id}$ is $m$-dissipative, which means that it is dissipative (i.e., $\langle(A[t,z]-\omega\,\mathrm{id})f,f\rangle_{L^2}\leq 0$ for every $f\in D(A[t,z])$) and that $\mathrm{Ran}((\omega+1)\,\mathrm{id}-A[t,z]) = ((\omega+1)\,\mathrm{id}-A[t,z])D(A[t,z])=L^2(\Omega,\R^d)$.
Equivalently, by the Lumer-Phillips theorem (see \cite{EngelNagel}), $A[t,z]-\omega\,\mathrm{id}$ generates a $C_0$ semigroup of contractions in $X=L^2(\Omega,\R^d)$.
Equivalently, \ref{H_stab} is satisfied with $M=1$. 
\end{itemize}
Under the above assumptions, Assumptions \ref{H_Z} to \ref{H_f_Lip} are satisfied.

The Lipschitz property with respect to $x$ in the second requirement and the third requirement are not necessary to apply Proposition \ref{prop_existence_uniqueness} but they will be used in the explicit approximation procedure, later. 

By Proposition \ref{prop_existence_uniqueness}, there exists a unique solution $y(\cdot)\in\mathscr{C}^0([0,T'],Z)\cap\mathscr{C}^1([0,T'],X)$ of \eqref{general_quasilinear_PDE} such that $y(0)=y^0$, for some $T'\in(0,T]$. 
Moreover, $y(t)\in B_Z(y^0,r)$ for every $t\in[0,T']$.

As an application of Theorem \ref{thm_approx_abstract}, our objective is to prove that, under appropriate assumptions, the solution $y(\cdot)$ can be approximated by the solutions of a family of finite particle systems, for which we design an explicit construction.


\subsection{Main result}

\paragraph{Explicit particle approximation.}
Let $\eta\in \mathscr{C}^\infty_c(\R^n)$ be a nonnegative symmetric smooth real-valued function on $\R^n$, of compact support contained in the unit ball $B_{\R^n}(0,1)$, 
such that $\int_{\R^n}\eta(x)\, dx=1$. 
Here, symmetric means that $\eta(x)=\eta(-x)$ for every $x\in\R^n$.
We set $C_\eta = \int_{\R^n}\Vert x\Vert\eta(x)\, dx$.
For example, we can take
$$
\eta(x) = \left\{\begin{array}{ll} 
c\, e^{1/(\Vert x\Vert^2-1)} & \textrm{if}\ \Vert x\Vert < 1, \\
0 & \textrm{otherwise} ,
\end{array}\right.
$$
where $c>0$ is a normalization constant.
Given any $\varepsilon\in(0,1]$, we denote by $\eta_\varepsilon\in \mathscr{C}^\infty_c(\R^n)$  the (mollifier) function given by
$$
\eta_\varepsilon(x) = \frac{1}{\varepsilon^n}\eta\left(\frac{x}{\varepsilon}\right) \qquad\forall x\in\R^n .
$$
For all $(t,z)\in[0,T]\times Z$, we define $\sigma_\varepsilon[t,z](x,x')\in\R^{d\times d}$  by
\begin{equation}\label{def_sigma_eps}
\sigma_\varepsilon[t,z](x,x') 
=  \sum_{\vert\alpha\vert\leq p} \int_\Omega \eta_\varepsilon(x-x'') a_\alpha[t,z](x'')  (D^\alpha\eta_\varepsilon)(x''-x') \, d\nu(x'')
\qquad \forall x,x'\in\Omega\times\Omega . 
\end{equation}
We have $\sigma_\varepsilon[t,z]\in\mathscr{C}^\infty(\Omega\times\Omega,\R^{d\times d})$ (it is smooth up to the boundary) and in particular it is bounded above by $\Vert\sigma_\varepsilon\Vert_\infty$ and Lipschitz continuous, uniformly with respect to $(t,z)\in[0,T]\times B_Z(y^0,r)$, with
$$
\Vert\sigma_\varepsilon\Vert_\infty \leq \frac{C_\sigma}{\varepsilon^{n+p}} 
\qquad\textrm{and}\qquad
\Lip(\sigma_\varepsilon) \leq \frac{C_\sigma}{\varepsilon^{n+p+1}} 
$$
for some constant $C_\sigma>0$ depending on $\eta$ and on the bounds on $a_\alpha$ but not depending on $\varepsilon$.

\medskip
Let $(\mathcal{A}^N,x^N)_{N\in\N^*}$ be a family of tagged partitions of $\Omega$ associated with $\nu$ satisfying \eqref{def_tagged} with $\gamma=1/n$, with $\mathcal{A}^N=(\Omega_1^N,\ldots,\Omega_N^N)$ and $x^N=(x_1^N,\ldots,x_N^N)$.
Given any $\varepsilon\in(0,1]$ and any $N\in\N^*$, we consider the finite particle system \eqref{particle_system_eps}, with $\sigma_\varepsilon$ defined by \eqref{def_sigma_eps}, i.e.,
$$
\dot\xi_{\varepsilon,i}^N(t) = \frac{1}{N}\sum_{j=1}^N \sum_{\vert\alpha\vert\leq p} \int_\Omega \eta_\varepsilon(x_i^N-x'') a_\alpha[t,y_\varepsilon^N(t)](x'')  (D^\alpha\eta_\varepsilon)(x''-x_j^N) \, dx'' \, \xi_{\varepsilon,j}^N(t) + f [t,y_\varepsilon^N(t)] (x_i^N)
$$
for every $i\in\{1,\ldots,N\}$, where $y_\varepsilon(t) = \sum_{i=1}^N \xi_{\varepsilon,i}^N(t) \, \mathds{1}_{\Omega_i^N}$ (see \eqref{defyepsN}.
Recall that, by Proposition \ref{prop_yepsilonN}, if $y^0\in\Lip(\Omega,\R^d)$ then the particle system has a unique solution such that $\xi_{\varepsilon,i}^N(0)=y^0(x_i^N)$ for every $i\in\{1,\ldots,N\}$, well defined on on $[0,T']$ for $N$ large enough.

\begin{theorem}\label{thm_approx_PDE_particle}
%
Assume that $y^0\in Z\cap\Lip(\Omega,\R^d)$ 
and that $y\in L^\infty([0,T'], W^{p+1,\infty}(\Omega,\R^d))$.
%
There exists $C>0$ such that
\begin{equation}\label{thm_approx_PDE_particle_estimate}
\Vert y_\varepsilon^N(t) - y(t) \Vert_{L^2}
\leq C\left( \varepsilon + \frac{1}{N^{1/n}} \frac{e^{1/\varepsilon^{n+p+1}}}{\varepsilon^{n+p}} \right) 
\end{equation}
for all $t\in[0,T']$, $\varepsilon\in(0,1]$ and $N\in\N^*$ large enough,
except in case \ref{O1} when $n=1$, in which case the first term $\varepsilon$ in the parenthesis at the right-hand side of \eqref{thm_approx_PDE_particle_estimate} must be replaced with $\sqrt{\varepsilon}$.

As a consequence, taking $\varepsilon=\varepsilon_N = 1/(\ln N)^{1/(n+p+1)}$ as $N\rightarrow+\infty$ (see Remark \ref{rem_comments_thm_approx_abstract}), 
\eqref{thm_approx_PDE_particle_estimate} gives
\begin{equation*}
\boxed{
\Vert y_{\varepsilon_N}^N(t) - y(t) \Vert_{L^2}
\leq \frac{C}{(\ln N)^{1/(n+p+1)}}  
}
\end{equation*}
for all $t\in[0,T']$ and $N\in\N^*$ large enough. 
\end{theorem}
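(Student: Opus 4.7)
The plan is to apply Theorem \ref{thm_approx_abstract} in case \ref{abstracti}, with $X = L^2(\Omega,\R^d)$, $Z$ as specified in the setting of \eqref{general_quasilinear_PDE} (so that $Z \hookrightarrow H^s \hookrightarrow L^\infty$ for $s > n/2$ large enough), and $\hat Z = W^{p+1,\infty}(\Omega,\R^d) \cap Z$, to which $y(t)$ belongs by the standing hypothesis. Since $\nu$ is a probability measure, $\Vert \cdot \Vert_{L^2} \leq \Vert \cdot \Vert_{L^\infty}$, which gives the embedding required in case \ref{abstracti} with $C_\infty = 1$. We take $f_\varepsilon = f$, so that $\chi_f \equiv 0$ in \ref{H_fepsilon_CV} and no approximation of the source is needed.

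The main technical task is to verify, for the explicit kernel $\sigma_\varepsilon$ of \eqref{def_sigma_eps}, the uniform stability \ref{H_stab} (with $A$ replaced by $A_\varepsilon$) and the convergence rate \ref{H_Aepsilon_CV}; the bounds $\Vert \sigma_\varepsilon \Vert_\infty \leq C_\sigma/\varepsilon^{n+p}$ and $\Lip(\sigma_\varepsilon) \leq C_\sigma/\varepsilon^{n+p+1}$ stated just after \eqref{def_sigma_eps} already yield \ref{H_sigmaepsilon}. For \ref{H_Aepsilon_CV}, Fubini and integration by parts in \eqref{def_sigma_eps} rewrite, for $g \in \hat Z$,
$$
A_\varepsilon[t,z]g(x) = \sum_{\vert\alpha\vert \leq p} \int_\Omega \eta_\varepsilon(x-x'')\, a_\alpha[t,z](x'')\, (\eta_\varepsilon \ast D^\alpha g)(x'')\, d\nu(x'') + R^\alpha_\varepsilon(x),
$$
where $R^\alpha_\varepsilon$ collects boundary contributions. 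Standard mollification estimates give $\Vert \eta_\varepsilon \ast h - h \Vert_{L^\infty} = O(\varepsilon \Vert h \Vert_{W^{1,\infty}})$ on subsets at distance $>\varepsilon$ from $\partial\Omega$, so that the interior part of $\Vert A_\varepsilon[t,z]g - A[t,z]g \Vert_{L^2}$ is of order $\varepsilon \Vert g \Vert_{W^{p+1,\infty}}$. In case \ref{O2} there is no boundary and nothing more to do; in case \ref{O1} with $n \geq 2$ the boundary contributions are still $O(\varepsilon)$ after extending $g$ across $\partial\Omega$ using the Lipschitz regularity of the boundary; for $n=1$, the boundary layer of length $O(\varepsilon)$ around the two endpoints contributes only $O(\sqrt\varepsilon)$ in $L^2$ norm, which is the origin of the anomaly stated in the theorem. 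Uniform stability \ref{H_stab} for $A_\varepsilon$ is obtained by checking that the quadratic form $\langle A_\varepsilon[t,z]g, g \rangle_{L^2}$ inherits the $m$-dissipativity shift $\omega$ of $A$ up to an $\varepsilon$-independent constant: after integration by parts, the ``smoothed symbol'' of $A_\varepsilon$ is pointwise bounded uniformly in $\varepsilon$ and the dissipativity of $A$ transfers.

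Plugging $\chi_A(\varepsilon) = \varepsilon$ (respectively $\sqrt\varepsilon$ in case \ref{O1} with $n=1$) and $\chi_f \equiv 0$ into \eqref{thm_approx_abstract_estimate_1}, and using $a_3^\varepsilon \leq C/\varepsilon^{n+p+1}$, $a_1^\varepsilon, a_2^\varepsilon = O(1/\varepsilon^{n+p+1})$, together with $\gamma = 1/n$, gives \eqref{thm_approx_PDE_particle_estimate} after absorbing polynomial prefactors into the exponential. The final scaling $\varepsilon_N = 1/(\ln N)^{1/(n+p+1)}$, taken with a multiplicative constant large enough that $N^{-1/n} e^{1/\varepsilon_N^{n+p+1}} \to 0$, then balances the two terms according to the optimization described in Remark \ref{rem_comments_thm_approx_abstract} and yields the stated $(\ln N)^{-1/(n+p+1)}$ rate.

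The main obstacle is the verification of \ref{H_Aepsilon_CV} with the sharp rate: controlling the boundary-layer contribution to $\Vert A_\varepsilon g - Ag \Vert_{L^2}$ in the presence of a Lipschitz boundary (case \ref{O1}) is delicate because the mollifier reaches across $\partial\Omega$ and $D^\alpha g$ is not canonically defined outside $\Omega$, and it is precisely this step that produces the $\sqrt\varepsilon$ loss in dimension one. The uniform dissipativity \ref{H_stab} of the smoothed operator is the second delicate point, since naive $L(L^2)$ bounds on $A_\varepsilon$ diverge as $\varepsilon\to 0$ and stability must instead be extracted from the explicit integration-by-parts structure of \eqref{def_sigma_eps}.
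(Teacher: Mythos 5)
Your overall strategy is the paper's: apply Theorem \ref{thm_approx_abstract}, case \ref{abstracti}, with $X=L^2(\Omega,\R^d)$, $\hat Z=W^{p+1,\infty}(\Omega,\R^d)$, $f_\varepsilon=f$, the bounds $\Vert\sigma_\varepsilon\Vert_\infty\lesssim\varepsilon^{-(n+p)}$, $\Lip(\sigma_\varepsilon)\lesssim\varepsilon^{-(n+p+1)}$ for \ref{H_sigmaepsilon}, and a mollification estimate giving \ref{H_Aepsilon_CV} with $\chi_A(\varepsilon)\sim\varepsilon$ except for an $\varepsilon^{n/2}$ boundary-collar contribution in case \ref{O1}; your accounting of that collar (interior $O(\varepsilon)$ via an extension and the mean value theorem, collar of measure $O(\varepsilon^n)$ contributing $\varepsilon^{n/2}$ in $L^2$, hence $\sqrt\varepsilon$ only when $n=1$) matches Lemma \ref{lem_CV_Aeps}, and your insertion of these rates into \eqref{thm_approx_abstract_estimate_1} and the choice of $\varepsilon_N$ (with a constant ensuring $N^{-1/n}e^{1/\varepsilon_N^{n+p+1}}$ stays controlled) is the optimization of Remark \ref{rem_comments_thm_approx_abstract}.

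The genuine gap is the uniform stability \ref{H_stab} for $A_\varepsilon$. Your proposed mechanism --- ``after integration by parts, the smoothed symbol of $A_\varepsilon$ is pointwise bounded uniformly in $\varepsilon$ and the dissipativity of $A$ transfers'' --- is not an argument in this setting: dissipativity cannot be read off a pointwise bound on a symbol, and no symbol calculus is available when the coefficients $a_\alpha[t,z](x)$ are merely Lipschitz in $x$ on a Lipschitz bounded domain with boundary conditions encoded in $D(A[t,z])$. The paper's device is structural and exact: by Fubini applied to \eqref{def_sigma_eps} --- note that no integration by parts is needed, since the derivative already falls on the mollifier, so there is no boundary remainder $R^\alpha_\varepsilon$ in the rewriting you propose --- one gets the factorization $A_\varepsilon g=\eta_\varepsilon\star_\Omega A(\eta_\varepsilon\star_\Omega g)$ (Lemma \ref{lem_Aeps}); since $\eta$ is even, the truncated convolution $\eta_\varepsilon\star_\Omega$ is symmetric on $L^2(\Omega,\R^d)$ (Lemma \ref{convol_Omega_sym}), so $\langle A_\varepsilon g,g\rangle_{L^2}=\langle A(\eta_\varepsilon\star_\Omega g),\eta_\varepsilon\star_\Omega g\rangle_{L^2}$, and the $m$-dissipativity of $A-\omega\,\mathrm{id}$ transfers to $A_\varepsilon-\omega\,\mathrm{id}$, giving \ref{H_stab} with $M=1$ uniformly in $\varepsilon$ via Lumer--Phillips (Lemma \ref{lem_Aeps_dissip}). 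This sandwich structure is exactly why the kernel \eqref{def_sigma_eps} mollifies on \emph{both} sides; the one-sided choice $A_\varepsilon g=A(\eta_\varepsilon\star_\Omega g)$ would not be dissipative, which is why your step cannot be repaired by a generic ``transfer'' claim. Apart from this (and a harmless miscount of the orders of $a_1^\varepsilon,a_2^\varepsilon$, which is absorbed in the exponential anyway), the remainder of your argument follows the paper's proof.
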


\begin{remark}
In the above explicit finite particle system, we have let untouched the function $f$ but we could of course approximate it as well, for example with a convolution.

A second remark is that we have used the functional space $X=L^2(\Omega,\R^d)$, but we can develop the same strategy for other spaces, like $X=\mathscr{C}^0(\Omega,\R^d)$. 
\end{remark}

\subsection{Proof of Theorem \ref{thm_approx_PDE_particle}}
\label{sec_proof_thm_approx_PDE_particle}
We are going to apply the item \ref{abstracti} of Theorem \ref{thm_approx_abstract} with 
the above spaces $X$ and $Z$, and with $\hat Z=W^{p+1,\infty}(\Omega,\R^d)$. The only thing we have to ensure is that Assumption \ref{H_Aepsilon_CV} is satisfied, with $\chi_A(\varepsilon) = \mathrm{O}(\sqrt{\varepsilon^2+\varepsilon^n})$ as $\varepsilon\rightarrow 0$.
This will be established in Lemma \ref{lem_CV_Aeps} at the end of this section.

Recall that the operator $A_\varepsilon[t,z]$ is defined by $A_\varepsilon[t,z] y(x) = \int_{\Omega} \sigma_\varepsilon[t,z](x,x') y(x')\, dx'$ for every $y\in Z$ and every $x\in\Omega$. 
To establish the required convergence estimate, we are going to express $A_\varepsilon[t,z]$ using an unusual convolution that we introduce next.
Since all what is developed hereafter is done for fixed $(t,z)\in[0,T]\times Z$, in the sequel we denote for short $A_\varepsilon=A_\varepsilon[t,z]$, $\sigma_\varepsilon=\sigma_\varepsilon[t,z]$ and $a_\alpha=a_\alpha[t,z]$.

\paragraph{Definition and properties of a convolution operator.}
Given any $g\in L^2(\Omega,\R^d)$, let us define and give some properties of the smooth approximation $\eta_\varepsilon\star_\Omega g\in \mathscr{C}^\infty(\Omega,\R^d)$ of $g$ for any $\varepsilon\in(0,1]$, defined hereafter. 

In the case \ref{O2}, i.e., when $\Omega$ is a smooth compact Riemannian manifold of dimension $n$ (without boundary), using a smooth partition of unity over an atlas of $\Omega$, we can always write $g=\sum_{i=1}^m g_i$ for some $m\in\N^*$ and for some functions $g_i\in L^k(\Omega,\R^d)$ whose essential support is contained in a chart of the atlas. In each chart, $\eta_\varepsilon\star g_i$ can thus be defined as the standard convolution in $\R^n$ for every $\varepsilon>0$ sufficiently small. At the global level, this defines the function $\eta_\varepsilon\star_\Omega g\in \mathscr{C}^\infty(\Omega)$.

In the case \ref{O1}, i.e., when $\Omega$ is the compact closure of a bounded open subset of $\R^n$ with a Lipschitz boundary, we have to be careful with the boundary. 
Given any $g\in L^2(\Omega,\R^d)$, for any $\varepsilon\in(0,1]$, we define the function $\eta_\varepsilon\star_\Omega g:\Omega\rightarrow\R^d$ by
$$
\eta_\varepsilon\star_\Omega g(x) = \int_\Omega \eta_\varepsilon(x-x') g(x')\, dx' \qquad\forall x\in\Omega
$$
but we stress that this is not a usual convolution (the integral is performed on $\Omega$ only) and thus the usual properties of the convolution cannot be used directly. This is why, hereafter, we relate this unusual convolution with the usual one, by extending functions on $\Omega$ to $\R^n$ by $0$ outside of $\Omega$. 
Given any $g\in L^2(\Omega,\R^d)$, 
we denote by $\tilde g = g\, \mathds{1}_\Omega\in L^2(\Omega,\R^d)$ the extension of $g$ to $\R^n$ by $0$.
For any $\varepsilon\in(0,1]$, we consider the function $\eta_\varepsilon\star\tilde g\in \mathscr{C}^\infty_c(\R^n,\R^d)$ defined by the usual convolution
$$
(\eta_\varepsilon\star\tilde g)(x) = \int_{\R^n} \eta_\varepsilon(x-x')\tilde g(x')\, dx' 
= \int_\Omega \eta_\varepsilon(x-x') g(x')\, dx' 
\qquad\forall x\in\R^n ,
$$
whose support satisfies $\supp(\eta_\varepsilon\star\tilde g)\subset\Omega+\overline{B}_{\R^n}(0,\varepsilon)$. We have
$$
\eta_\varepsilon\star_\Omega g = (\eta_\varepsilon\star\tilde g)_{\vert\Omega} ,
$$
i.e., $\eta_\varepsilon\star_\Omega g$ is the restriction of $\eta_\varepsilon\star\tilde g$ to $\Omega$.
Hence $\eta_\varepsilon\star_\Omega g\in \mathscr{C}^\infty(\Omega,\R^d)$: it is smooth up to the boundary of the compact domain $\Omega$. 
We also have $\eta_\varepsilon\star_\Omega g = (\tilde g\star\eta_\varepsilon)_{\vert\Omega}$.
Finally, as a consequence of the properties of the usual convolution, we have $\eta_\varepsilon\star_\Omega g\rightarrow g$ in $L^2(\Omega,\R^d)$ as $\varepsilon\rightarrow 0$.

More generally, for every $\alpha=(\alpha_1,\ldots,\alpha_n)\in\N^n$, the functions $D^\alpha(\eta_\varepsilon\star_\Omega g)$, $D^\alpha(\eta_\varepsilon)\star_\Omega g$ and $\eta_\varepsilon\star_\Omega D^\alpha g$ (provided that $D^\alpha g\in L^2(\Omega,\R^d)$ in the latter case) 
are smooth on $\Omega$ and are 
the restrictions to $\Omega$ of the smooth functions $D^\alpha(\eta_\varepsilon\star\tilde g)$, $D^\alpha(\eta_\varepsilon)\star \tilde g$ and $\eta_\varepsilon\star \widetilde{D^\alpha g}$ on $\R^n$, respectively. 
In particular, the function $A(\eta_\varepsilon\star_\Omega g)$ is the restriction to $\Omega$ of $A(\eta_\varepsilon\star\tilde g)$.

With these definitions, in both cases \ref{O1} and \ref{O2}, for every $\alpha=(\alpha_1,\ldots,\alpha_n)\in\N^n$ we have $D^\alpha (\eta_\varepsilon\star_\Omega g) = (D^\alpha \eta_\varepsilon)\star_\Omega g = \eta_\varepsilon\star_\Omega D^\alpha g$ for every $g\in L^2(\Omega,\R^d)$ (such that $D^\alpha g\in L^2(\Omega,\R^d)$ for the last equality) 
and $D^\alpha (\eta_\varepsilon\star_\Omega g)\rightarrow D^\alpha g$ in $L^2(\Omega,\R^d)$ as $\varepsilon\rightarrow 0$ if $D^\alpha g\in L^2(\Omega,\R^d)$.
%

Note that the $\R^{d\times d}$-valued function $\sigma_\varepsilon$ defined by \eqref{def_sigma_eps} is also given by
$$
\sigma_\varepsilon(x,x') = \Big( \eta_\varepsilon \star_\Omega \sum_{\vert\alpha\vert\leq p} a_\alpha(\cdot) (D^\alpha\eta_\varepsilon)(\cdot-x') \Big) (x) \qquad \forall x,x'\in\Omega.
$$

\paragraph{Expressing $A_\varepsilon$ with the convolution operator $\eta_\varepsilon\star_\Omega$.}
\begin{lemma}\label{lem_Aeps}
Given any $\varepsilon\in(0,1]$ and any $g\in L^2(\Omega,\R^d)$, we have\footnote{The function $A(\eta_\varepsilon\star_\Omega g)$ is the restriction of $A(\eta_\varepsilon\star \tilde g)$ to $\Omega$ and, denoting by $g_\varepsilon = A(\eta_\varepsilon\star \tilde g)\mathds{1}_\Omega$ the extension of the function $A(\eta_\varepsilon\star_\Omega g)$ to $\R^n$ by $0$, the function $\eta_\varepsilon\star_\Omega A(\eta_\varepsilon\star_\Omega g)$ is the restriction to $\Omega$ of the function $\eta_\varepsilon\star g_\varepsilon$.}
$$
A_\varepsilon g=\eta_\varepsilon\star_\Omega A(\eta_\varepsilon\star_\Omega g)
= \Big( \eta_\varepsilon\star \big( A(\eta_\varepsilon\star\tilde g)\mathds{1}_\Omega \big) \Big)_{\vert\Omega}  .   
$$
\end{lemma}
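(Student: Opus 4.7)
The plan is to simply unfold the definition of $A_\varepsilon g$, apply Fubini to swap the order of the double integral, and then recognize that the inner integral against $(D^\alpha\eta_\varepsilon)(x''-x')$ produces exactly $D^\alpha(\eta_\varepsilon\star_\Omega g)(x'')$. Concretely, for $g\in L^2(\Omega,\R^d)$ and $x\in\Omega$, I start from
$$
A_\varepsilon g(x) = \int_\Omega \sigma_\varepsilon(x,x')\, g(x')\, dx'
= \sum_{\vert\alpha\vert\leq p} \int_\Omega\!\!\int_\Omega \eta_\varepsilon(x-x'')\, a_\alpha(x'')\, (D^\alpha\eta_\varepsilon)(x''-x')\, dx''\, g(x')\, dx' .
$$
Since $\eta_\varepsilon$ is smooth with compact support, the integrand is bounded and Fubini applies; swapping the order of integration gives a factor
$$
\int_\Omega (D^\alpha\eta_\varepsilon)(x''-x')\, g(x')\, dx' = D^\alpha\!\!\left(\int_\Omega \eta_\varepsilon(\cdot-x')\, g(x')\, dx'\right)\!(x'') = D^\alpha(\eta_\varepsilon\star_\Omega g)(x'') ,
$$
where the differentiation under the integral sign is legitimate because $\eta_\varepsilon\in\mathscr{C}^\infty_c(\R^n)$ (so its derivatives in the first variable are continuous and bounded).

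Summing over $\alpha$ with coefficients $a_\alpha(x'')$ reconstructs $A(\eta_\varepsilon\star_\Omega g)(x'')$ by the very definition $A=\sum_{\vert\alpha\vert\leq p}a_\alpha(\cdot)D^\alpha$. One is left with
$$
A_\varepsilon g(x) = \int_\Omega \eta_\varepsilon(x-x'')\, A(\eta_\varepsilon\star_\Omega g)(x'')\, dx'' = \eta_\varepsilon\star_\Omega A(\eta_\varepsilon\star_\Omega g)(x) ,
$$
which is the first identity. The second identity is then immediate from the relation between $\star_\Omega$ and the standard convolution on $\R^n$ that was set up right before the statement: by definition, $\eta_\varepsilon\star_\Omega h = (\eta_\varepsilon\star \tilde h)_{\vert\Omega}$ for any $h\in L^2(\Omega,\R^d)$, applied to $h=A(\eta_\varepsilon\star_\Omega g)$, whose extension by zero to $\R^n$ is exactly $A(\eta_\varepsilon\star\tilde g)\mathds{1}_\Omega$ in view of the pointwise identity $A(\eta_\varepsilon\star_\Omega g)=\bigl(A(\eta_\varepsilon\star\tilde g)\bigr)_{\vert\Omega}$ already recorded above.

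The only mildly delicate point is the justification of Fubini and of the differentiation under the integral when $\Omega$ has a boundary (case \ref{O1}) or when $\Omega$ is a manifold (case \ref{O2}); in both cases the smoothness and compact support of $\eta_\varepsilon$, combined with the boundedness of $a_\alpha$ from the standing assumptions, make the double integral absolutely convergent and its partial derivatives in $x''$ interchange freely with the $x'$-integration. No step requires more than routine bookkeeping.
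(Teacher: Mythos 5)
Your proof is correct and follows essentially the same route as the paper: unfold the kernel $\sigma_\varepsilon$, exchange the two integrations, and use the identity $(D^\alpha\eta_\varepsilon)\star_\Omega g = D^\alpha(\eta_\varepsilon\star_\Omega g)$ established just before the lemma to reassemble $A(\eta_\varepsilon\star_\Omega g)$ inside the outer $\eta_\varepsilon\star_\Omega$. The extra details you supply (Fubini, differentiation under the integral sign, and the passage to the restricted-convolution form via $\tilde h$) are exactly the justifications the paper leaves implicit or records in the surrounding discussion and footnote.
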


\begin{proof}
For $x\in\Omega$ fixed, we have, using that $D^\alpha\eta_\varepsilon\star_\Omega g = D^\alpha(\eta_\varepsilon\star_\Omega g)$,
$$
(A_\varepsilon g)(x) = \int_\Omega \sigma_\varepsilon(x,x')g(x')\, dx' = \Big( \eta_\varepsilon\star_\Omega \sum_{\vert\alpha\vert\leq p} a_\alpha D^\alpha\eta_\varepsilon\star_\Omega g \Big) (x)  = \Big( \eta_\varepsilon\star_\Omega A(\eta_\varepsilon\star_\Omega g) \Big) (x) ,
$$
thus giving the lemma.
\end{proof}

\paragraph{Uniform stability property.}
Thanks to Lemma \ref{lem_Aeps}, we can now establish that \ref{H_stab} is satisfied with $M=1$ (contraction semigroup) uniformly, with respect to $\varepsilon\in(0,1]$.

\begin{lemma}\label{convol_Omega_sym}
For all $g,h\in L^2(\Omega,\R^d)$, we have $\langle\eta_\varepsilon\star_\Omega g,h\rangle_{L^2} = \langle g,\eta_\varepsilon\star_\Omega h\rangle_{L^2}$.
\end{lemma}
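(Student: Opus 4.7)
The proof plan is a short direct computation, exploiting the two structural facts explicitly built into the definition of $\eta_\varepsilon\star_\Omega$: first, the scalar mollifier $\eta$ is symmetric ($\eta(-x)=\eta(x)$), so $\eta_\varepsilon(x-x')=\eta_\varepsilon(x'-x)$; second, the convolution is performed by integrating over $\Omega$ against $\eta_\varepsilon(x-x')$ only (no extension to $\R^n$ is needed for this symmetry).

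The plan is to unfold the $L^2(\Omega,\R^d)$ inner product, write $(\eta_\varepsilon\star_\Omega g)(x)=\int_\Omega \eta_\varepsilon(x-x')g(x')\, dx'$, and apply Fubini's theorem to exchange the order of integration. The hypotheses make Fubini routine: $\eta_\varepsilon$ is bounded (of compact support on $\R^n$), $\nu$ is a probability measure, and by the Cauchy--Schwarz inequality $(x,x')\mapsto \eta_\varepsilon(x-x')|\langle g(x'),h(x)\rangle_{\R^d}|$ is integrable on $\Omega\times\Omega$. After exchanging, I use $\eta_\varepsilon(x-x')=\eta_\varepsilon(x'-x)$ to recognize the inner integral as $(\eta_\varepsilon\star_\Omega h)(x')$, and recollect the outer integral as $\langle g,\eta_\varepsilon\star_\Omega h\rangle_{L^2}$.

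Symbolically, the chain is
\begin{equation*}
\langle\eta_\varepsilon\star_\Omega g,h\rangle_{L^2}
=\int_\Omega\!\!\int_\Omega \eta_\varepsilon(x-x')\,\langle g(x'),h(x)\rangle_{\R^d}\, dx'\, dx
=\int_\Omega\!\!\int_\Omega \eta_\varepsilon(x'-x)\,\langle g(x'),h(x)\rangle_{\R^d}\, dx\, dx'
=\langle g,\eta_\varepsilon\star_\Omega h\rangle_{L^2},
\end{equation*}
where the middle equality uses both Fubini and the symmetry of $\eta$. In case \ref{O2}, exactly the same argument applies after unfolding via a partition of unity subordinated to an atlas, since $\eta_\varepsilon\star_\Omega$ was defined locally in each chart. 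There is essentially no obstacle; the only point requiring a sentence of justification is the applicability of Fubini, which is immediate from boundedness of $\eta_\varepsilon$ together with $g,h\in L^2$ on a set of finite $\nu$-measure.
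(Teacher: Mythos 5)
Correct, and essentially the paper's argument: the paper passes to the zero-extensions $\tilde g,\tilde h$ and invokes the symmetry of the usual convolution by the symmetric kernel $\eta_\varepsilon$ on $\R^n$, which is exactly the Fubini-plus-symmetry computation you carry out directly on $\Omega\times\Omega$, so the substance is the same. Your one-line treatment of case \ref{O2} is admittedly brief (the chart/partition-of-unity definition makes the symmetry slightly less immediate), but the paper's own proof is written only in the Euclidean setting as well, so this is not a gap relative to the paper.
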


\begin{proof}
Using that $\eta_\varepsilon\star_\Omega g = (\eta_\varepsilon\star\tilde g)_{\vert\Omega}$ and that $\tilde h=0$ on $\R^n\setminus\Omega$, we have $\langle \eta_\varepsilon\star_\Omega g,h\rangle_{L^2} = \langle \eta_\varepsilon\star\tilde g, \tilde h\rangle_{L^2(\R^n,\R^d)}$. Now, using the fact that $\eta_\varepsilon$ is symmetric, i.e., that $\eta_\varepsilon(x)=\eta_\varepsilon(-x)$ for any $x\in\R^n$, ensuring that the usual convolution by $\eta_\varepsilon$ is symmetric in $L^2(\R^n,\R^d)$, we have $\langle \eta_\varepsilon\star\tilde g, \tilde h\rangle_{L^2(\R^n,\R^d)} = \langle\tilde g, \eta_\varepsilon\star\tilde h\rangle_{L^2(\R^n,\R^d)}$. But the latter term is equal to $\langle g,\eta_\varepsilon\star_\Omega h\rangle_{L^2}$ because $\tilde g=0$ on $\R^n\setminus\Omega$. The lemma follows. 
\end{proof}

\begin{lemma}\label{lem_Aeps_dissip}
Like the operator $A-\omega\,\mathrm{id}$, the operator $A_\varepsilon-\omega\,\mathrm{id}$ is $m$-dissipative on $L^2(\Omega,\R^d)$, for any $\varepsilon\in(0,1]$. As a consequence, \ref{H_stab} is satisfied (with $M=1$), uniformly with respect to $\varepsilon\in(0,1]$.
\end{lemma}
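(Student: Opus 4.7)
The plan is to establish dissipativity of $A_\varepsilon - \omega\,\mathrm{id}$ on $L^2(\Omega,\R^d)$ from the sandwich formula $A_\varepsilon = C_\varepsilon A C_\varepsilon$ of Lemma \ref{lem_Aeps}, where I write $C_\varepsilon g := \eta_\varepsilon \star_\Omega g$, combined with two basic properties of $C_\varepsilon$ on $L^2(\Omega,\R^d)$: self-adjointness (Lemma \ref{convol_Omega_sym}) and the contraction estimate $\Vert C_\varepsilon g\Vert_{L^2} \leq \Vert g\Vert_{L^2}$. The latter is immediate from Young's inequality applied to the $\R^n$-convolution $\eta_\varepsilon \star \tilde g$, using $\Vert\eta_\varepsilon\Vert_{L^1(\R^n)} = 1$ and $\Vert\tilde g\Vert_{L^2(\R^n)} = \Vert g\Vert_{L^2(\Omega)}$, together with the identity $C_\varepsilon g = (\eta_\varepsilon \star \tilde g)_{\vert \Omega}$ (the analogous statements in case \ref{O2} being obtained by the chart-and-partition-of-unity construction already used to define $\star_\Omega$).

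For dissipativity, given $g \in L^2(\Omega,\R^d)$ I set $g_\varepsilon := C_\varepsilon g$, which is smooth and, I claim, lies in $D(A[t,z])$. By Lemma \ref{lem_Aeps} and self-adjointness of $C_\varepsilon$, the pairing collapses:
\[
\langle A_\varepsilon g, g\rangle_{L^2} \;=\; \langle C_\varepsilon A g_\varepsilon, g\rangle_{L^2} \;=\; \langle A g_\varepsilon, C_\varepsilon g\rangle_{L^2} \;=\; \langle A g_\varepsilon, g_\varepsilon\rangle_{L^2}.
\]
The dissipativity of $A - \omega\,\mathrm{id}$ at $g_\varepsilon$ then yields $\langle A g_\varepsilon, g_\varepsilon\rangle_{L^2} \leq \omega\, \Vert g_\varepsilon\Vert_{L^2}^2$. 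Up to replacing $\omega$ by $\max(\omega,0)$ (a harmless relaxation, since increasing $\omega$ only weakens \ref{H_stab}), I may assume $\omega \geq 0$; the contraction property then gives $\omega\,\Vert g_\varepsilon\Vert_{L^2}^2 \leq \omega\,\Vert g\Vert_{L^2}^2$, whence $\langle (A_\varepsilon - \omega\,\mathrm{id}) g, g\rangle_{L^2} \leq 0$ for every $g \in L^2(\Omega,\R^d)$, uniformly in $\varepsilon\in(0,1]$.

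For $m$-dissipativity, I would observe that $A_\varepsilon$ is a bounded operator on $L^2(\Omega,\R^d)$ (its kernel $\sigma_\varepsilon$ is bounded on the compact $\Omega\times\Omega$, so $A_\varepsilon$ is even Hilbert-Schmidt), and invoke the standard fact that any bounded dissipative operator $B$ on a Hilbert space is $m$-dissipative: the bound $\Vert(\lambda\,\mathrm{id} - B)g\Vert^2 \geq \lambda^2 \Vert g\Vert^2$ for $\lambda > 0$ gives injectivity and closed range, while dissipativity of the adjoint $B^*$ (which holds on a Hilbert space whenever $B$ is bounded and dissipative) yields $\ker(\lambda\,\mathrm{id} - B^*)=\{0\}$, hence dense range, hence surjectivity. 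Applied to $B = A_\varepsilon - \omega\,\mathrm{id}$ with $\lambda = 1$, this gives surjectivity of $(\omega+1)\,\mathrm{id} - A_\varepsilon$ onto $L^2(\Omega,\R^d)$. The Lumer-Phillips theorem then yields \ref{H_stab} with $M=1$ for $(e^{sA_\varepsilon})_{s\geq 0}$, uniformly in $\varepsilon$.

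The one delicate point sits in the dissipativity step, namely the claim that $g_\varepsilon = C_\varepsilon g \in D(A[t,z])$. In case \ref{O2} this is automatic, since $D(A)$ imposes only regularity and $g_\varepsilon$ is smooth. In case \ref{O1} with nontrivial boundary conditions encoded in $D(A)$, a generic $g_\varepsilon$ need not satisfy them; I expect the standard way around this is a density argument, first establishing the bound for $g \in \mathscr{C}^\infty_c(\mathrm{int}(\Omega),\R^d)$ (for which $g_\varepsilon \in \mathscr{C}^\infty_c(\mathrm{int}(\Omega),\R^d) \subset D(A)$ when $\varepsilon$ is small enough) and then extending to $g \in L^2$ by continuity of both sides of the dissipativity inequality in $g$. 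This compatibility with boundary conditions is the main obstacle I would expect the actual proof to confront.
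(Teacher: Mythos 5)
Your proposal follows essentially the same route as the paper: the sandwich formula of Lemma \ref{lem_Aeps}, the symmetry of $\eta_\varepsilon\star_\Omega$ from Lemma \ref{convol_Omega_sym} to collapse $\langle A_\varepsilon g,g\rangle_{L^2}$ onto $\langle A(\eta_\varepsilon\star_\Omega g),\eta_\varepsilon\star_\Omega g\rangle_{L^2}$, boundedness of $A_\varepsilon$ plus dissipativity of the adjoint for the $m$-part, and Lumer--Phillips. The domain/boundary-condition subtlety you flag is not confronted by the paper either (it applies the dissipativity of $A-\omega\,\mathrm{id}$ directly at $\eta_\varepsilon\star_\Omega g$ for smooth $g$, without comment), and your handling of the $\omega$-term via the $L^2$-contractivity of $\eta_\varepsilon\star_\Omega$ (with $\omega\geq 0$) is in fact slightly more careful than the paper's displayed identity, which tacitly treats $\eta_\varepsilon\star_\Omega(\omega\,\eta_\varepsilon\star_\Omega g)$ as $\omega g$.
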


\begin{proof}
Given any $g\in\mathscr{C}^\infty(\Omega,\R^d)$, applying Lemma \ref{convol_Omega_sym} to $g=(A-\omega\,\mathrm{id})(\eta_\varepsilon\star_\Omega g)$, we have
$$
\langle (A_\varepsilon-\omega\,\mathrm{id}) g,g\rangle_{L^2} 
= \langle \eta_\varepsilon\star_\Omega (A-\omega\,\mathrm{id})(\eta_\varepsilon\star_\Omega g),g\rangle_{L^2} 
= \langle (A-\omega\,\mathrm{id})(\eta_\varepsilon\star_\Omega g), \eta_\varepsilon\star_\Omega g\rangle_{L^2} 
\leq 0
$$
because $A-\omega\,\mathrm{id}$ is dissipative.
Since $A_\varepsilon$ is bounded on $L^2(\Omega,\R^d)$, we have $D(A_\varepsilon)=L^2(\Omega,\R^d)$, and thus its adjoint $A_\varepsilon^*$ is bounded and $D(A_\varepsilon^*)=L^2(\Omega,\R^d)$. Then, obviously, $A_\varepsilon^*-\omega\,\mathrm{id}$ is also dissipative.
The conclusion now follows from the Lumer-Phillips theorem (see \cite[Chapter II, Corollary 3.17]{EngelNagel} or \cite[Chapter 1, Theorem 4.3]{Pazy}).
\end{proof}

\begin{remark}
Lemma \ref{lem_Aeps_dissip} is the key step where we use the particular form $A_\varepsilon g=\eta_\varepsilon\star_\Omega A(\eta_\varepsilon\star_\Omega g)$, in order to ensure dissipativity. 
The dissipativity property is not satisfied if we choose $A_\varepsilon g=A(\eta_\varepsilon\star_\Omega g)$.
Note that, as already mentioned, \ref{H_stab} is always satisfied (with $M=1$) when choosing the Yosida approximant $A_\varepsilon = \left( \mathrm{id}-\varepsilon A \right)^{-1}A$. The interest of the above construction is that it is fully explicit.
\end{remark}

\paragraph{A first convergence property of $A_\varepsilon$.}

\begin{lemma}\label{lem_Aeps2}
Given any $g\in\mathscr{C}^\infty(\Omega,\R^d)$, we have $A_\varepsilon g \rightarrow Ag$ in $L^2(\Omega,\R^d)$ as $\varepsilon\rightarrow 0$.
\end{lemma}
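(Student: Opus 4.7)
The natural route is to feed the factored expression from Lemma \ref{lem_Aeps} into a triangle inequality and then exploit two basic properties of the boundary-aware mollifier $\eta_\varepsilon\star_\Omega$: its $L^2$-contractivity and the standard mollifier convergence. The latter is already known for the usual convolution on $\R^n$, and it transfers to $\star_\Omega$ via the identity $\eta_\varepsilon\star_\Omega h=(\eta_\varepsilon\star\tilde h)_{|\Omega}$ with $\tilde h=h\,\mathds{1}_\Omega$, which the paper has set up precisely for this purpose.

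First I would invoke Lemma \ref{lem_Aeps} to write $A_\varepsilon g=\eta_\varepsilon\star_\Omega A(\eta_\varepsilon\star_\Omega g)$ and split
\[
A_\varepsilon g-Ag = \eta_\varepsilon\star_\Omega\bigl(A(\eta_\varepsilon\star_\Omega g)-Ag\bigr) + \bigl(\eta_\varepsilon\star_\Omega Ag - Ag\bigr).
\]
For the second bracket, $g\in\mathscr{C}^\infty(\Omega,\R^d)$ and continuity of the $a_\alpha$ give $Ag\in\mathscr{C}^0(\Omega,\R^d)\subset L^2(\Omega,\R^d)$, so the standard mollifier convergence recalled in the preceding discussion (i.e., $\eta_\varepsilon\star_\Omega h\to h$ in $L^2$ for any $h\in L^2(\Omega,\R^d)$) yields $\eta_\varepsilon\star_\Omega Ag\to Ag$ in $L^2$.

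For the first bracket, the key observation is that $\eta_\varepsilon\star_\Omega$ is an $L^2$-contraction: using $\eta_\varepsilon\star_\Omega h=(\eta_\varepsilon\star\tilde h)_{|\Omega}$, Young's inequality and $\Vert\eta_\varepsilon\Vert_{L^1(\R^n)}=1$ give
\[
\Vert\eta_\varepsilon\star_\Omega h\Vert_{L^2(\Omega)}\leq \Vert\eta_\varepsilon\star\tilde h\Vert_{L^2(\R^n)}\leq \Vert\tilde h\Vert_{L^2(\R^n)}=\Vert h\Vert_{L^2(\Omega)}.
\]
Applied with $h=A(\eta_\varepsilon\star_\Omega g)-Ag$, this reduces matters to showing $A(\eta_\varepsilon\star_\Omega g)\to Ag$ in $L^2$. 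Expanding,
\[
A(\eta_\varepsilon\star_\Omega g)-Ag = \sum_{|\alpha|\leq p} a_\alpha\bigl(D^\alpha(\eta_\varepsilon\star_\Omega g)-D^\alpha g\bigr),
\]
and invoking the commutation $D^\alpha(\eta_\varepsilon\star_\Omega g)=\eta_\varepsilon\star_\Omega D^\alpha g$ recorded earlier in the paper, each difference $\eta_\varepsilon\star_\Omega D^\alpha g-D^\alpha g$ tends to $0$ in $L^2(\Omega,\R^d)$ because $D^\alpha g\in L^2(\Omega,\R^d)$ (as $g$ is smooth on the compact $\Omega$). The uniform bound $\Vert a_\alpha\Vert_{\R^{d\times d}}\leq C_a$ and the finiteness of the sum over $|\alpha|\leq p$ then finish off this term.

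\textbf{Main obstacle.} The delicate point is the interplay between the derivative and the boundary-restricted convolution in case \ref{O1}: one must rely on the fact, established just before the lemma, that $D^\alpha(\eta_\varepsilon\star_\Omega g)$ coincides with $\eta_\varepsilon\star_\Omega D^\alpha g$ on $\Omega$ (rather than merely on the $\varepsilon$-interior) when $D^\alpha g\in L^2(\Omega,\R^d)$. Once that identity is accepted, the argument is purely a matter of combining $L^2$-contractivity of the mollifier with the standard $L^2$-convergence of $\eta_\varepsilon\star_\Omega h\to h$, together with boundedness of the coefficients $a_\alpha$.
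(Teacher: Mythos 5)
Your proposal is correct and follows essentially the same route as the paper's proof: the same triangle-inequality splitting based on Lemma \ref{lem_Aeps}, the same Young-inequality ($L^1$--$L^2$) bound to control the outer convolution of the remainder, and the same commutation identity $D^\alpha(\eta_\varepsilon\star_\Omega g)=\eta_\varepsilon\star_\Omega D^\alpha g$ together with boundedness of the coefficients $a_\alpha$ to conclude that $A(\eta_\varepsilon\star_\Omega g)\to Ag$ in $L^2$.
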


\begin{proof}
By the triangular inequality, using the expression of $A_\varepsilon$ given by Lemma \ref{lem_Aeps},
$$
\Vert A_\varepsilon g-Ag\Vert_{L^2} \leq
\Vert \eta_\varepsilon\star_\Omega (A(\eta_\varepsilon\star_\Omega g) - Ag)\Vert_{L^2} + \Vert \eta_\varepsilon\star_\Omega Ag-Ag\Vert_{L^2} .
$$
The second term at the right-hand side of that inequality converges to $0$ as $\varepsilon\rightarrow 0$, because $Ag\in L^2(\Omega)$. 
To handle the first term, we use the Young inequality $\Vert F\star G\Vert_{L^r(\R^n)} \leq B_{p,q} \Vert F\Vert_{L^p(\R^n)} \Vert G\Vert_{L^q(\R^n)}$ with $F=\eta_\varepsilon$, $G=\tilde r_\varepsilon$ where $r_\varepsilon=A(\eta_\varepsilon\star_\Omega g)-Ag$, and with $r=2$, $p=1$ and $q=2$, obtaining 
$$
\Vert \eta_\varepsilon\star_\Omega r_\varepsilon\Vert_{L^2} 
= \Vert (\eta_\varepsilon\star \tilde r_\varepsilon)_{\vert\Omega}\Vert_{L^2(\R^n,\R^d)} 
\leq \Vert \eta_\varepsilon\star \tilde r_\varepsilon\Vert_{L^2(\R^n)} 
\leq B_{1,2} \Vert \eta_\varepsilon\Vert_{L^1(\Omega)} \Vert \tilde r_\varepsilon\Vert_{L^2(\R^n,\R^d)} 
= B_{1,2} \Vert r_\varepsilon\Vert_{L^2} 
$$
because $\Vert \eta_\varepsilon\Vert_{L^1(\Omega)}=1$, and we conclude that $A_\varepsilon g \rightarrow Ag$ in $L^2(\Omega)$ by noticing that $r_\varepsilon\rightarrow 0$ in $L^2(\Omega,\R^d)$ because
$$
A(\eta_\varepsilon\star_\Omega g)=\sum_{\vert\alpha\vert\leq p}a_\alpha D^\alpha(\eta_\varepsilon\star_\Omega g)=\sum_{\vert\alpha\vert\leq p}a_\alpha \, \eta_\varepsilon\star_\Omega D^\alpha g 
\longrightarrow \sum_{\vert\alpha\vert\leq p}a_\alpha D^\alpha g = Ag
$$
in $L^2(\Omega,\R^d)$ as $\varepsilon\rightarrow 0$. 
\end{proof}

In terms of Schwartz kernels, the kernel of $A_\varepsilon$ is obtained by convoluting to the left and ``to the right" (in some sense) the Schwartz kernel of $A$ with $\eta_\varepsilon$, and that, for every $x\in\Omega$ fixed, the $\R^{d\times d}$-valued function $x'\mapsto\sigma_\varepsilon(x,x')$ converges in the distributional sense to the $\R^{d\times d}$-valued distribution 
$\sum_{\vert\alpha\vert\leq p} (-1)^{\vert\alpha\vert} a_\alpha \delta_x^{(\alpha)}$ as $\varepsilon\rightarrow 0$ (where $\delta_x^{(\alpha)}$ is a distributional derivative of the Dirac $\delta_x$ at $x$), which is the Schwartz kernel of $A$.

The convergence property stated in Lemma \ref{lem_Aeps2} is not enough to get \ref{H_Aepsilon_CV}: we need to refine the analysis in order to get convergence estimates.
We start by refining our analysis of the unusual convolution operator introduced previously.

\paragraph{Convergence estimates for the convolution operator $\eta_\varepsilon\star_\Omega$.}

We introduce the following notation: in the case \ref{O1}, for any $\varepsilon\in(0,1]$ we define the compact subset $\Omega_\varepsilon$ of the interior of $\Omega$ by
\begin{equation*}
\Omega_\varepsilon = \{ x\in\Omega\ \mid\ \mathrm{d}_\Omega(x,\partial\Omega)\geq\varepsilon \} .
\end{equation*}
There exists a constant $C_{\partial\Omega}>0$ such that 
\begin{equation*}
\nu(\Omega\setminus\Omega_\varepsilon) \leq C_{\partial\Omega} \, \varepsilon^n \qquad \forall \varepsilon\in(0,1] .
\end{equation*}
In the case \ref{O2} we simply set $\Omega_\varepsilon=\Omega$ and $C_{\partial\Omega}=0$.

We will also need to use extension operators in the case \ref{O1}:
according to \cite[Chap. VI, Sec. 3, Theorem 5]{Stein_1970} (see also \cite[Chap. 12]{Tartar}), there exist $C_E>0$ and a linear continuous operator $E$ mapping functions on $\Omega$ to functions defined on the whole $\R^n$, such that the restriction of $Eg$ to $\Omega$ coincides with $g$ and $\Vert Eg\Vert_{W^{j,k}(\R^n)} \leq C_E\Vert g\Vert_{W^{j,k}(\Omega)}$ for every $g\in W^{j,k}(\Omega)$ and for every $j\in\N$ and every $k\in[1,+\infty]$ (Stein extension). 
In the case \ref{O2}, accordingly, we set $C_E=1$. Note that this is \emph{not} the extension by zero.

\begin{lemma}\label{lem_speed_CV_dirac}
Given any $\varepsilon\in(0,1]$, we have
$$
\Vert \eta_\varepsilon\star_\Omega g - g\Vert_{L^\infty} \leq 2\Vert g\Vert_{L^\infty} \qquad \forall g\in L^\infty(\Omega,\R^d) ,
$$
$$
\vert \eta_\varepsilon\star_\Omega g(x) - g(x)\vert \leq  C_E C_\eta \varepsilon \Vert g\Vert_{W^{1,\infty}} \qquad\forall x\in\Omega_\varepsilon\qquad \forall g\in W^{1,\infty}(\Omega,\R^d). 
$$
\end{lemma}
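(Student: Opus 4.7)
The plan is to prove both inequalities in a straightforward way, using Young's inequality for the first and a pointwise Taylor-type estimate for the second.

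\textbf{First inequality.} I would start by observing that $\eta_\varepsilon\geq 0$ with $\int_{\R^n}\eta_\varepsilon=1$, so for every $x\in\Omega$,
$$
\int_\Omega \eta_\varepsilon(x-x')\,dx' \leq \int_{\R^n}\eta_\varepsilon(y)\,dy = 1,
$$
and hence $|\eta_\varepsilon\star_\Omega g(x)|\leq \Vert g\Vert_{L^\infty}$ for every $x\in\Omega$. The triangle inequality $\Vert \eta_\varepsilon\star_\Omega g-g\Vert_{L^\infty}\leq \Vert \eta_\varepsilon\star_\Omega g\Vert_{L^\infty}+\Vert g\Vert_{L^\infty}$ then yields the claimed bound $2\Vert g\Vert_{L^\infty}$.

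\textbf{Second inequality.} The key point is that, for $x\in\Omega_\varepsilon$, the ball $B_{\R^n}(x,\varepsilon)$ is entirely contained in $\Omega$ (by definition of $\Omega_\varepsilon$) and contains the support of $x'\mapsto\eta_\varepsilon(x-x')$. So I would work with the Stein extension $Eg$ provided earlier (with $C_E=1$ in case \ref{O2}, using the partition-of-unity convention), which satisfies $\Vert Eg\Vert_{W^{1,\infty}(\R^n)}\leq C_E\Vert g\Vert_{W^{1,\infty}(\Omega)}$ and agrees with $g$ on $\Omega$. For $x\in\Omega_\varepsilon$, since the integrand lives inside $\Omega$,
$$
\eta_\varepsilon\star_\Omega g(x) = \int_{\R^n}\eta_\varepsilon(x-x')Eg(x')\,dx' = (\eta_\varepsilon\star Eg)(x).
$$
Using $\int\eta_\varepsilon=1$ and the fundamental theorem of calculus applied to $s\mapsto Eg(x-sy)$ on $[0,1]$ (which makes sense in $\R^n$ thanks to the extension),
$$
(\eta_\varepsilon\star Eg)(x)-Eg(x) = \int_{\R^n}\eta_\varepsilon(y)\big(Eg(x-y)-Eg(x)\big)\,dy = -\int_{\R^n}\eta_\varepsilon(y)\int_0^1 \nabla Eg(x-sy)\cdot y\,ds\,dy.
$$
Taking norms, since $Eg(x)=g(x)$ for $x\in\Omega$,
$$
|\eta_\varepsilon\star_\Omega g(x)-g(x)| \leq \Vert\nabla Eg\Vert_{L^\infty(\R^n)}\int_{\R^n}\eta_\varepsilon(y)\Vert y\Vert\,dy.
$$
A change of variables $y=\varepsilon z$ gives $\int\eta_\varepsilon(y)\Vert y\Vert\,dy = \varepsilon\int\eta(z)\Vert z\Vert\,dz = C_\eta\varepsilon$, and the extension estimate gives $\Vert\nabla Eg\Vert_{L^\infty(\R^n)}\leq C_E\Vert g\Vert_{W^{1,\infty}}$. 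Combining these yields the desired bound $C_E C_\eta\varepsilon\Vert g\Vert_{W^{1,\infty}}$.

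\textbf{Main obstacle.} The argument is essentially mechanical once one recognizes the role of $\Omega_\varepsilon$, which is to ensure that no boundary effects interfere. The only mildly subtle point is the unified treatment of cases \ref{O1} and \ref{O2}: in the manifold case one must check that the same estimate holds through the partition-of-unity definition of $\eta_\varepsilon\star_\Omega$, with $C_E=1$. This is why the proof is phrased via the extension $E$, which packages both cases into a single line.
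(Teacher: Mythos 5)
Your proof is correct and follows essentially the same route as the paper: the first bound via $\Vert\eta_\varepsilon\Vert_{L^1}=1$ and the triangle inequality, and the second by noting that for $x\in\Omega_\varepsilon$ the support of $\eta_\varepsilon(x-\cdot)$ lies in $\Omega$, then invoking the Stein extension $Eg$ (to avoid any convexity assumption on $\Omega$) together with a first-order estimate and the change of variables giving $C_\eta\varepsilon$. The only cosmetic difference is that you use the fundamental theorem of calculus along segments where the paper applies the mean value (Lipschitz) estimate to $Eg$ directly; these are interchangeable.
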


\begin{proof}
The first inequality is obviously obtained by using that $\Vert \eta_\varepsilon\star \tilde g\Vert_{L^\infty(\R^n)} \leq \Vert  \tilde g\Vert_{L^\infty(\R^n,\R^d)}$ because $\Vert \eta_\varepsilon\Vert_{L^1(\R^n,\R^d)}=1$.

Let $x\in\Omega_\varepsilon$ be arbitrary. 
Since $\supp(\eta_\varepsilon)\subset\overline B_{\R^n}(0,\varepsilon)$ and thus $\supp(\eta_\varepsilon(x-\cdot))\subset\overline B_{\R^n}(x,\varepsilon)\subset\Omega$, we have $\int_\Omega\eta_\varepsilon(x-x')\, dx' = \int_{\R^n}\eta_\varepsilon(x-x')\, dx' = 1$, hence $g(x) = \int_\Omega\eta_\varepsilon(x-x') g(x)\, dx'$ and 
$$
\vert\eta_\varepsilon\star g(x) - g(x)\vert 
\leq \int_\Omega \eta_\varepsilon(x-x') \vert g(x')- g(x)\vert\, dx' 
\leq \int_{\R^n} \eta_\varepsilon(x-x') \vert Eg(x')- Eg(x)\vert\, dx' 
$$
where $Eg$ is the Stein extension of $g$ (actually the latter inequality is even an equality because $\eta_\varepsilon(x-x')=0$ for any $x'\in\R^n\setminus\Omega$, since $x\in\Omega_\varepsilon$).
It follows from the mean value theorem that 
$$
\vert Eg(x')- Eg(x)\vert \leq \Vert Eg\Vert_{W^{1,\infty}(\R^n,\R^d)} \Vert x-x'\Vert
\leq C_E \Vert g\Vert_{W^{1,\infty}} \Vert x-x'\Vert .
$$
Hence
$$
\vert\eta_\varepsilon\star g(x) - g(x)\vert \leq C_E \Vert g\Vert_{W^{1,\infty}} \int_{\R^n} \frac{1}{\varepsilon^n}\eta\left(\frac{x-x'}{\varepsilon}\right) \Vert x-x'\Vert\, dx' = C_E C_\eta \varepsilon \Vert g\Vert_{W^{1,\infty}}
$$
by using the change of variable $x'=x-\varepsilon s$.

Note that, in the above argument, we have used a $W^{1,\infty}$ extension of $g$ (and not the extension by $0$, which is not in $W^{1,\infty}(\R^n,\R^d)$) in order to use the mean value theorem, because, in the case \ref{O1}, $\Omega$ may not be convex.
\end{proof}

\paragraph{Convergence properties of $A_\varepsilon$.}
We are now in a position to establish \ref{H_Aepsilon_CV}.
Recall that, by assumption, $\max_{\vert\alpha\vert\leq p} \Vert a_\alpha\Vert_{W^{1,\infty}}\leq C_a$.

\begin{lemma}\label{lem_CV_Aeps}
Given any $\varepsilon\in(0,1]$, we have
\begin{equation}\label{lem_CV_Aeps_1}
\Vert (A_\varepsilon-A)g\Vert_{L^\infty} \leq 4n^{p+1} C_a \Vert g\Vert_{W^{p,\infty}} \qquad \forall g\in W^{p,\infty}(\Omega,\R^d) ,
\end{equation}
\begin{equation}\label{lem_CV_Aeps_2}
\vert (A_\varepsilon-A)g (x) \vert \leq 3n^{p+1} C_E C_\eta C_a \varepsilon \Vert g\Vert_{W^{p+1,\infty}}\qquad \forall x\in\Omega_\varepsilon\qquad \forall g\in W^{p+1,\infty}(\Omega,\R^d) .
\end{equation}
As a consequence,
\begin{equation}\label{lem_CV_Aeps_3}
\Vert (A_\varepsilon-A)g\Vert_{L^2} \leq 3 n^{p+1} C_a \sqrt{C_E^2 C_\eta^2 \varepsilon^2 + 4 C_{\partial\Omega} \varepsilon^n} \Vert g\Vert_{W^{p+1,\infty}} \qquad \forall g\in W^{p+1,\infty}(\Omega,\R^d) ,
\end{equation}
and therefore \ref{H_Aepsilon_CV} is satisfied with $C_A=2n^{p+1}C_a$ and $\chi_A(\varepsilon) = \sqrt{C_E^2 C_\eta^2 \varepsilon^2 + 4 C_{\partial\Omega} \varepsilon^n}$.
\end{lemma}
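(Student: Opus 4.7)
The starting point is the identity $A_\varepsilon g = \eta_\varepsilon\star_\Omega A(\eta_\varepsilon\star_\Omega g)$ from Lemma \ref{lem_Aeps}, which gives the decomposition
\[
A_\varepsilon g - Ag = \bigl(\eta_\varepsilon\star_\Omega A(\eta_\varepsilon\star_\Omega g) - A(\eta_\varepsilon\star_\Omega g)\bigr) + \bigl(A(\eta_\varepsilon\star_\Omega g) - Ag\bigr) .
\]
The first bracket is the ``left'' mollification error of the auxiliary function $h_\varepsilon := A(\eta_\varepsilon\star_\Omega g)$; the second bracket, using that $D^\alpha(\eta_\varepsilon\star_\Omega g) = \eta_\varepsilon\star_\Omega D^\alpha g$, rewrites as $\sum_{|\alpha|\le p} a_\alpha\bigl(\eta_\varepsilon\star_\Omega D^\alpha g - D^\alpha g\bigr)$, a sum of ``right'' mollification errors. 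Each piece will be estimated via Lemma \ref{lem_speed_CV_dirac}.

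\textbf{First inequality (global $L^\infty$ bound).} Apply the crude estimate $\Vert\eta_\varepsilon\star_\Omega h - h\Vert_{L^\infty}\le 2\Vert h\Vert_{L^\infty}$ to both brackets. For the second bracket this gives directly $\sum_{|\alpha|\le p}C_a\cdot 2\Vert D^\alpha g\Vert_{L^\infty}\le 2n^{p+1}C_a\Vert g\Vert_{W^{p,\infty}}$ using the crude count $\#\{\alpha:|\alpha|\le p\}\le n^{p+1}$. For the first bracket, bound $\Vert h_\varepsilon\Vert_{L^\infty}\le\sum_{|\alpha|\le p}\Vert a_\alpha\Vert_{L^\infty}\Vert\eta_\varepsilon\star_\Omega D^\alpha g\Vert_{L^\infty}\le n^{p+1}C_a\Vert g\Vert_{W^{p,\infty}}$, since $\eta_\varepsilon\star_\Omega$ is $L^\infty$-contractive (Young with $\Vert\eta_\varepsilon\Vert_{L^1}=1$). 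Adding the two contributions yields $4n^{p+1}C_a\Vert g\Vert_{W^{p,\infty}}$.

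\textbf{Second inequality (pointwise bound on $\Omega_\varepsilon$).} Apply instead the sharper bound $|\eta_\varepsilon\star_\Omega h(x) - h(x)|\le C_E C_\eta\varepsilon\Vert h\Vert_{W^{1,\infty}}$ of Lemma \ref{lem_speed_CV_dirac}, valid for $x\in\Omega_\varepsilon$. For the second bracket, take $h=D^\alpha g$ termwise and use $\Vert D^\alpha g\Vert_{W^{1,\infty}}\le\Vert g\Vert_{W^{p+1,\infty}}$. For the first bracket, take $h=h_\varepsilon=A(\eta_\varepsilon\star_\Omega g)$ and control $\Vert h_\varepsilon\Vert_{W^{1,\infty}}$ via the Leibniz rule: each term $a_\alpha\eta_\varepsilon\star_\Omega D^\alpha g$ contributes at most $(C_a+\Vert Da_\alpha\Vert_{L^\infty})\Vert\eta_\varepsilon\star_\Omega D^\alpha g\Vert_{W^{1,\infty}}$, and the convolution commutes with $D$ while contracting $L^\infty$, giving $\Vert h_\varepsilon\Vert_{W^{1,\infty}}\lesssim n^{p+1}C_a\Vert g\Vert_{W^{p+1,\infty}}$ uniformly in $\varepsilon$. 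Absorbing combinatorial constants yields the claimed bound $3n^{p+1}C_E C_\eta C_a\varepsilon\Vert g\Vert_{W^{p+1,\infty}}$.

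\textbf{Third inequality ($L^2$ bound) and conclusion.} Split
\[
\Vert(A_\varepsilon-A)g\Vert_{L^2}^2 = \int_{\Omega_\varepsilon}|(A_\varepsilon-A)g|^2\,d\nu + \int_{\Omega\setminus\Omega_\varepsilon}|(A_\varepsilon-A)g|^2\,d\nu.
\]
On $\Omega_\varepsilon$ insert the pointwise bound from the second inequality (using $\nu(\Omega_\varepsilon)\le 1$); on the boundary strip $\Omega\setminus\Omega_\varepsilon$ use the global $L^\infty$ bound from the first inequality together with $\nu(\Omega\setminus\Omega_\varepsilon)\le C_{\partial\Omega}\varepsilon^n$. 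Taking square roots produces exactly the factor $\sqrt{C_E^2C_\eta^2\varepsilon^2+4C_{\partial\Omega}\varepsilon^n}$ (after being a little loose on constants). Assumption \ref{H_Aepsilon_CV} then follows with $\hat Z = W^{p+1,\infty}(\Omega,\R^d)$, $C_A = 2n^{p+1}C_a$, and $\chi_A(\varepsilon) = \sqrt{C_E^2C_\eta^2\varepsilon^2+4C_{\partial\Omega}\varepsilon^n}$.

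\textbf{Main obstacle.} The only subtle point is the handling of the boundary strip in case \ref{O1}: the $\varepsilon$-refined estimate of Lemma \ref{lem_speed_CV_dirac} genuinely requires $x\in\Omega_\varepsilon$, because otherwise the ``mass defect'' $1-\int_\Omega\eta_\varepsilon(x-x')\,dx'$ no longer vanishes and the Stein extension argument breaks down. Bookkeeping this forces the two-regime split and produces the extra $\varepsilon^n$-loss in the $L^2$ estimate. A secondary point to watch is that the Stein-extension constant $C_E$ must be used on the $\varepsilon$-dependent function $h_\varepsilon=A(\eta_\varepsilon\star_\Omega g)$, which is permissible because $C_E$ is absolute (independent of the function) and $\Vert h_\varepsilon\Vert_{W^{1,\infty}}$ is bounded uniformly in $\varepsilon$ by the above Leibniz computation. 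In case \ref{O2} the strip disappears ($C_{\partial\Omega}=0$) and only the $\varepsilon$-term survives, which explains the dichotomy in Theorem \ref{thm_approx_PDE_particle}.
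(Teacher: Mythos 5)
Your proposal is correct and follows essentially the same route as the paper: the identical decomposition $A_\varepsilon g-Ag=\bigl(\eta_\varepsilon\star_\Omega A(\eta_\varepsilon\star_\Omega g)-A(\eta_\varepsilon\star_\Omega g)\bigr)+\bigl(A(\eta_\varepsilon\star_\Omega g)-Ag\bigr)$, with both brackets estimated via Lemma \ref{lem_speed_CV_dirac} (the crude $L^\infty$ bound globally, the $\varepsilon$-refined bound on $\Omega_\varepsilon$ with the Leibniz control of $\Vert A(\eta_\varepsilon\star_\Omega g)\Vert_{W^{1,\infty}}\leq 2n^{p+1}C_a\Vert g\Vert_{W^{p+1,\infty}}$, which is where the precise factor $2+1=3$ you absorbed comes from), followed by the same split of the $L^2$ norm over $\Omega_\varepsilon$ and the boundary strip of measure at most $C_{\partial\Omega}\varepsilon^n$.
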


Note that, in the case \ref{O2}, we have $C_{\partial\Omega}=0$ and then the above estimate is in $\varepsilon$. Actually, in both cases \ref{O1} and \ref{O2} the estimate is in $\varepsilon$ as $\varepsilon\rightarrow 0$ except in the case \ref{O1} when moreover $n=1$, in which case the estimate is in $\sqrt{\varepsilon}$.

\begin{proof}
For any $\alpha\in\N^k$ such that $\vert\alpha\vert\leq p$, noting that $D^\alpha(\eta_\varepsilon\star_\Omega g-g) = \eta_\varepsilon\star_\Omega D^\alpha g - D^\alpha g$, we infer from Lemma \ref{lem_speed_CV_dirac} applied to $D^\alpha g$ that 
$$
\Vert D^\alpha(\eta_\varepsilon\star_\Omega g-g)\Vert_{L^\infty} \leq 2\Vert f\Vert_{W^{p,\infty}}
$$
$$
\vert D^\alpha(\eta_\varepsilon\star_\Omega g-g)(x) \vert
= \vert \eta_\varepsilon\star_\Omega (D^\alpha g)(x)-D^\alpha g(x)\vert\leq C_E C_\eta \varepsilon \Vert g\Vert_{W^{\vert\alpha\vert+1,\infty}} \qquad \forall x\in\Omega_\varepsilon 
$$
and thus, using that $A=\sum_{\vert\alpha\vert\leq p} a_\alpha D^\alpha$ and that $\Vert a_\alpha\Vert_{L^\infty(\Omega)}\leq C_a$, and since the number of $\alpha\in\N^n$ such that $\vert\alpha\vert\leq p$ is $1+n+\cdots+n^p=\frac{n^{p+1}-1}{n-1}\leq n^{p+1}$, we obtain
$$
\Vert A(\eta_\varepsilon\star_\Omega g-g) \Vert_{L^\infty(\Omega)} \leq 2n^{p+1} C_a \Vert g\Vert_{W^{p,\infty}}
$$
$$
\vert A(\eta_\varepsilon\star_\Omega g-g)(x)\vert \leq n^{p+1} C_E C_\eta C_a \varepsilon\Vert g\Vert_{W^{p+1,\infty}}  \qquad \forall x\in\Omega_\varepsilon  .
$$
Besides, we infer from Lemma \ref{lem_speed_CV_dirac} applied to $A(\eta_\varepsilon\star_\Omega g)$, using that $\Vert a_\alpha\Vert_{W^{1,\infty}}\leq C_a$, that
\begin{multline*}
\Vert \eta_\varepsilon \star_\Omega A(\eta_\varepsilon\star_\Omega g) - A(\eta_\varepsilon\star_\Omega g) \Vert_{L^\infty} \leq 2 \Vert A(\eta_\varepsilon\star_\Omega g) \Vert_{L^\infty} \\
\leq 2n^{p+1} C_a \Vert \eta_\varepsilon\star_\Omega g\Vert_{W^{p,\infty}}
\leq 2n^{p+1} C_a \Vert g\Vert_{W^{p,\infty}}
\end{multline*}
and
\begin{multline*}
\vert \eta_\varepsilon \star_\Omega A(\eta_\varepsilon\star_\Omega g)(x) - A(\eta_\varepsilon\star_\Omega g)(x) \vert \leq C_E C_\eta \varepsilon \Vert A(\eta_\varepsilon\star_\Omega g) \Vert_{W^{1,\infty}}  \\
\leq 2 n^{p+1} C_E C_\eta C_a \varepsilon \Vert \eta_\varepsilon\star_\Omega g \Vert_{W^{p+1,\infty}}
\leq 2 n^{p+1} C_E C_\eta C_a \varepsilon \Vert g\Vert_{W^{p+1,\infty}} \qquad \forall x\in\Omega_\varepsilon
\end{multline*}
where we have used that $\Vert \eta_\varepsilon\star h\Vert_{L^\infty(\R^n)} \leq \Vert h\Vert_{L^\infty(\R^n)}$ for any $h\in L^\infty(\R^n)$ (recall that $\Vert\eta_\varepsilon\Vert_{L^1(\R^n)}=1$).
Finally, by the triangular inequality, we have
$$
\vert (A_\varepsilon-A)g(x) \vert \leq \vert \eta_\varepsilon \star_\Omega A(\eta_\varepsilon\star_\Omega g)(x) - A(\eta_\varepsilon\star_\Omega g) (x) \vert + \vert A(\eta_\varepsilon\star_\Omega g-g) (x)\vert
$$
and the estimates \eqref{lem_CV_Aeps_1} and \eqref{lem_CV_Aeps_2} follow.

To establish \eqref{lem_CV_Aeps_3}, we write
$$
\Vert (A_\varepsilon-A)g\Vert_{L^2}^2 = \int_{\Omega_\varepsilon} \vert (A_\varepsilon-A)g(x)\vert^2\, dx +\int_{\Omega\setminus\Omega_\varepsilon} \vert (A_\varepsilon-A)g(x)\vert^2\, dx .
$$
Using \eqref{lem_CV_Aeps_2}, the first term is estimated by
$$
\int_{\Omega_\varepsilon} \vert (A_\varepsilon-A)g(x)\vert^2\, dx \leq 9 n^{2(p+1)} C_E^2 C_\eta^2 C_a^2 \varepsilon^2 \Vert g\Vert_{W^{p+1,\infty}}^2 ,
$$
and using \eqref{lem_CV_Aeps_1}, the second term is estimated by
$$
\int_{\Omega\setminus\Omega_\varepsilon} \vert (A_\varepsilon-A)g(x)\vert^2\, dx \leq \Vert (A_\varepsilon-A)g\Vert_{L^\infty}^2 \nu(\Omega\setminus\Omega_\varepsilon)
\leq 36 n^{2(p+1)} C_a^2 \Vert g\Vert_{W^{p,\infty}}^2 C_{\partial\Omega}\, \varepsilon^n ,
$$
and the conclusion follows.
\end{proof}

\section{Examples}\label{examples}
In this section we state  the expression of the particle systems approximating a few emblematic quasilinear PDEs.

\subsection{Transport equations}
At a pedagogical level let us consider the most simple transport equation
\begin{equation}\nonumber
\partial_ty(t,x)+\partial_xy(t,x)=0
\end{equation}

The approximating particle systems reads
$$
\dot\xi_{\varepsilon,i}^N(t) = \frac{1}{N}\sum_{j=1}^N \eta_\varepsilon^{*2}(x_i^N-x_j^N)
\xi_{\varepsilon,j}^N(t) 
$$
for every $i\in\{1,\ldots,N\}$, where
$$
\eta^{*2}(x)=
\int_\Omega \eta_\varepsilon(x-x'') D\eta_\varepsilon(x'')dx''.
$$
Already on this example one sees the difference between the method of approximation developed in the present article with a simple discretization one: in our case all the values of $x^N_j$ are involved in the equation satisfied by $
\xi_{\varepsilon,i}^N(t) $, unlike the nearest neighbour's ones  for a simple discretization.  Note however that the limit $\varepsilon\to 0$ selects the values $x^N_j\sim x^N_i$.
\subsection{Burgers equation}
The Burgers equation reads
\begin{equation}\nonumber
\partial_ty(t,x)+y(t,x)\partial_xy(t,x)=0
\end{equation}
The associated particle systems is then

$$
\dot\xi_{\varepsilon,i}^N(t) = \frac{1}{N}\sum_{j,k=1}^N \eta_\varepsilon^{\Omega_k}(x_i^N,x_j^N)
\xi_{\varepsilon,k}^N(t) 
\xi_{\varepsilon,j}^N(t) 
$$
for every $i\in\{1,\ldots,N\}$, where
$$
\eta_\varepsilon^{\Omega_k}(x_i^N,x_j^N)=
\int_{\Omega_k} \eta_\varepsilon(x_i^N-x'') D\eta_\varepsilon(x''-x_j^N)dx''.
$$

\subsection{KdV equation}
The Korteweg–De Vries equation reads
\begin{equation}\nonumber
\partial_ty(t,x)+\partial^3_xy(t,x)+6y(t,x)\partial_xy(t,x)=0
\end{equation}
Therefore the system of particle approximating KdV is

$$
\dot\xi_{\varepsilon,i}^N(t) = 6\frac{1}{N}\sum_{j=1}^N D^2\eta_\varepsilon^{*2}(x_i^N-x_j^N)
\xi_{\varepsilon,j}^N(t) 
+
\frac{1}{N}\sum_{j,k=1}^N \eta_\varepsilon^{\Omega_k}(x_i^N,x_j^N)
\xi_{\varepsilon,k}^N(t) 
\xi_{\varepsilon,j}^N(t) 
$$
for every $i\in\{1,\ldots,N\}$, where again
$$
\eta^{*2}(x)=
\int_\Omega \eta_\varepsilon(x-x'') D\eta_\varepsilon(x'')dx''
$$
and
$$
\eta_\varepsilon^{\Omega_k}(x_i^N,x_j^N)=
\int_{\Omega_k} \eta_\varepsilon(x_i^N-x'') D\eta_\varepsilon(x''-x_j^N)dx''.
$$
\section{Further remarks}\label{sec_without_semigroup}
In this section, we show that the particle approximation result stated in Theorem \ref{thm_approx_abstract} can be extended to some cases where the operator does even not generate a semigroup, like the case of the backward heat equation $\partial_t y = -\triangle y$, {and even to some nonlinear cases}.

We have seen in Section \ref{sec_PDE} that the strategy to approximate a given solution $y$ to a quasilinear PDE  goes in two steps: first, find an adequate  bounded approximation $A_\varepsilon$ of $A$, and $y_\varepsilon$ of $y$; second, take the particle approximation $y_\varepsilon^N$ of $y_\varepsilon$.
The second step is an automatic consequence of Theorem \ref{thm_estim_graph}  and is thus general. 
The first step has been performed in Sections \ref{sec_abstract} and \ref{sec_PDE} by applying the Duhamel formula, within the semigroup context, which required the instrumental uniform stability estimate \ref{H_sigmaepsilon} this is in such a way that, in the first step of the proof of Theorem \ref{thm_approx_abstract}, we have established the inequality \eqref{CV_yeps_abstract}, i.e., 
\begin{equation}\label{inegepsi}
\Vert y_\varepsilon(t)-y(t)\Vert_X \leq C \chi(\varepsilon) \Vert y\Vert_{L^1([0,T],Z)} \qquad\forall t\in[0,T]
\end{equation}
for some $C>0$. 
But this first step, requiring the demanding estimate \ref{sec_PDE}, can be dropped if one is able to design a bounded approximation $A_\varepsilon$ of $A$ and an approximation $y_\varepsilon$ of $y$ such that the estimate \eqref{inegepsi} is satisfied. 
And indeed this can often be done, without requiring any semigroup property. 
Let us give some examples. 

\paragraph{Backward heat equation.} Consider the backward heat equation $\partial_t y = -\triangle y$ and its approximation $\partial_t y_\varepsilon = -\triangle_\varepsilon y_\varepsilon$ where $\triangle_\varepsilon$ is a bounded approximation of $\triangle$ as done in the previous sections. 
Assuming that $y(0)=e^{t\triangle}f$ for some $g\in L^2(\Omega,\R^d)$, we have $y(t) = e^{(T-t)\triangle}f$ for every $t\in[0,T]$. Now, we take $y_\varepsilon(0)=e^{t\triangle_\varepsilon}f$ and we have as well $y_\varepsilon(t) = e^{(T-t)\triangle_\varepsilon}f$ for every $t\in[0,T]$. Then, obviously, the Duhamel formula gives \eqref{inegepsi}. 

Of course, this works because we have considered a very regular initial condition.
More generally, the argument works for operators with constant coefficients, taking Fourier transforms and considering initial conditions whose Fourier transform has a compact support. 

\paragraph{Variational inequalities.} There exists a wide existing literature on variational inequalities, with the objective of establishing the existence of a solution to a nonlinear equation $\partial_t y=A(y)$ by approximating the nonlinear unbounded operator $A$ with a bounded operator $A_\varepsilon$. The estimate \eqref{inegepsi} can then obtained from energy considerations, or from Galerkin approximation considerations, etc. 
Most of known equations having a physical meaning enter in such a framework.

\paragraph{Acknowledgment.}
We are indebted to Claude Bardos, Julien Barr\'e, Arnaud Debussche, Nicolas Fournier, Isabelle Gallagher, Thierry Gallay, Alain Joye, Beno\^{\i}t Perthame and Laure Saint-Raymond for useful discussions.

\appendix

\section{Appendix: Finite particle approximation of quasilinear integral evolution equations}\label{app_abstract_quasilinear_integral}
As at the beginning of Section \ref{sec_abstract}, let $(\Omega,\mathrm{d}_\Omega)$ be a complete metric space, endowed with a probability measure $\nu\in\mathcal{P}(\Omega)$, having a family $(\mathcal{A}^N,x^N)_{N\in\N^*}$ of tagged partitions associated with $\nu$, satisfying \eqref{def_tagged}.
Hereafter, the set $\R^{d\times d}$ of square real-valued matrices of size $d$ is equipped with the matrix norm $\Vert\ \Vert_{\R^{d\times d}}$ induced by the Euclidean norm $\Vert\ \Vert_{\R^d}$ on $\R^d$. 

\subsection{Quasilinear integral evolution equation}\label{app_abstract_quasilinear_integral_setting}
Let $T>0$.
Throughout this appendix, we consider the quasilinear integral evolution equation on $[0,T]$
\begin{equation}\label{abstract_quasilinear_integral}
\boxed{
\partial_t y(t,x) = \int_\Omega \sigma[t,y(t)](x,x') \, y(t,x')\, d\nu(x') + f[t,y(t)](x)
}
\end{equation}
where $\sigma[t,y(t)](x,x')\in\R^{d\times d}$ and $y(t)(x)=y(t,x)\in\R^d$.
We follow the framework of Section \ref{sec_abstract} with $X=Z=L^\infty(\Omega,\R^d)$ (sometimes denoted in short, in what follows, by $L^\infty$): the equation \eqref{abstract_quasilinear_integral} is of the form \eqref{abstract_quasilinear} with the \emph{bounded} linear operator $A[t,z]$ defined by 
$$
A[t,z]\, y(x) = \int_\Omega \sigma[t,z](x,x') \, y(x')\, d\nu(x') .
$$
Let $y^0\in L^\infty(\Omega,\R^d)$.
We assume that
there exists $r>0$ such that, for all $(t,z)\in[0,T]\times B_{L^\infty}(y^0,r)$:
\begin{enumerate}[label=$(A_{\arabic*})$]
\item\label{A_sigma} the kernel $\sigma[t,z](x,x')\in\R^{d\times d}$ depends continuously on $(t,z,x,x')\in [0,T]\times B_{L^\infty}(y^0,r)\times\Omega\times\Omega$ and is bounded and Lipschitz continuous with respect to $z$, uniformly with respect to $(t,x,x')$, i.e., 
\begin{equation*}
\begin{split}
\Vert\sigma[t,z](x,x'))\Vert_{\R^{d\times d}} &\leq \Vert\sigma\Vert_\infty , \\
\Vert\sigma[t,z_1](x,x')-\sigma[t,z_2](x,x')\Vert_{\R^{d\times d}} &\leq \Lip(\sigma) \Vert z_1-z_2\Vert_{L^\infty} ,
\end{split}
\end{equation*}
for all $t\in[0,T]$, $z_1,z_2\in B_{L^\infty}(y^0,r)$ and $x,x'\in\Omega$.
\item\label{A_f} 
$f[t,z](x)$ depends continuously on $(t,z,x)\in [0,T]\times B_{L^\infty}(y^0,r)\times\Omega$ and is bounded and Lipschitz continuous with respect to $z$, uniformly with respect to $(t,x)$, i.e., 
\begin{equation*}
\begin{split}
\Vert f[t,z](x)\Vert_{\R^d} &\leq \Vert f\Vert_\infty , \\
\Vert f[t,z_1]-f[t,z_2]\Vert_{L^\infty} &\leq \Lip(f) \Vert z_1-z_2\Vert_{L^\infty} ,
\end{split}
\end{equation*}
for all $t\in[0,T]$, $z_1,z_2\in B_{L^\infty}(y^0,r)$ and $x\in\Omega$.
\end{enumerate}
It is then easy to see that Assumptions \ref{A_sigma} and \ref{A_f} imply Assumptions \ref{H_Z} to \ref{H_f_Lip} of Section \ref{sec_abstract} with $X=Z=L^\infty(\Omega,\R^d)$, $S=\mathrm{id}$, $C_4=\Lip(\sigma)$, $B=0$, $C_6=\Vert f\Vert_\infty$, $C_7=\Lip(f)$. 

By Proposition \ref{prop_existence_uniqueness}, there exists a unique solution $y(\cdot)\in\mathscr{C}^1([0,T'],L^\infty(\Omega,\R^d))$ of \eqref{abstract_quasilinear_integral} such that $y(0)=y^0$, for some $T'\in(0,T]$. Moreover, we have $y(t)\in B_{L^\infty}(y^0,r)$ for every $t\in[0,T']$. 
We recall and insist on the fact that the time $T'$ depends, in particular, on the choice of the space $L^\infty(\Omega,\R^d)$.

\subsection{Finite particle approximation}\label{app_prop_existence_yN}
Following the framework of Section \ref{sec_abstract_particle_approx}, 
we use the family $(\mathcal{A}^N,x^N)_{N\in\N^*}$ of tagged partitions of $\Omega$ associated with $\nu$, satisfying \eqref{def_tagged}, with $\mathcal{A}^N=(\Omega_1^N,\ldots,\Omega_N^N)$ and $x^N=(x_1^N,\ldots,x_N^N)$,
and we consider the finite particle system that is naturally associated with \eqref{abstract_quasilinear_integral}, given for any $N\in\N^*$ by
\begin{equation}\label{particle_system}
\boxed{
\dot\xi_i^N(t) = \frac{1}{N}\sum_{j=1}^N \sigma [t, y^N(t)] (x_i^N,x_j^N) \, \xi_j^N(t) + f[t,y^N(t)](x_i^N)
\qquad \forall i\in\{1,\ldots,N\} 
}
\end{equation}
where $y^N(t)\in L^\infty(\Omega,\R^d)$ is the piecewise constant function on $\Omega$ defined by
\begin{equation}\label{defyN}
\boxed{
y^N(t)(x) = y^N(t,x) = \sum_{i=1}^N \xi_i^N(t) \, \mathds{1}_{\Omega_i^N}(x) 
}
\end{equation}
In particular, we have $y^N(t,x_i^N)=\xi_i^N(t)$ for every $i\in\{1,\ldots,N\}$. 
Thanks to \eqref{defyN}, the particle system \eqref{particle_system} is equivalent to the evolution equation
\begin{equation}\label{eq_yN}
\boxed{
\partial_t y^N(t,x) = \int_\Omega \sigma^N [t, y^N(t)] (x,x') \, y^N(t,x')\, d\nu(x') + f^N[t,y^N(t)](x)
}
\end{equation}
where $\sigma^N [t, y^N(t)]$ and $f^N[t,y^N(t)]$ are also piecewise constant: for any $z\in L^\infty(\Omega,\R^d)$, we set
$\sigma^N[t,z](x,x') = \sigma[t,z](x_i^N,x_j^N)$
and
$f^N[t,z](x) = f[t,z](x_i^N)$ if $x\in\Omega_i^N$ and $x'\in\Omega_j^N$. 
Solutions $t\mapsto(\xi_1^N(t),\ldots,\xi_N^N(t))\in(\R^d)^N$ of the particle system \eqref{particle_system} are indifferently seen as solutions $t\mapsto y^N(t)$ of the quasilinear integral evolution equation \eqref{eq_yN} and conversely. 


\begin{proposition}\label{prop_existence_yN}
Assume that $y^0\in\mathscr{C}^0(\Omega,\R^d)$.
For every $N\in\N^*$, there exists a unique solution $t\mapsto(\xi_1^N(t),\ldots,\xi_N^N(t))\in(\R^d)^N$ of \eqref{particle_system} such that $\xi_i^N(0)=y^0(x_i^N)$ for every $i\in\{1,\ldots,N\}$, well defined on the time interval $[0,T'']$ if $N\geq N_0$, for some $T''\in(0,T']$ and some $N_0\in\N^*$; moreover, $y^N(t)\in B_{L^\infty}(y^0,r)$ for every $t\in[0,T'']$ and every $N\geq N_0$.
\end{proposition}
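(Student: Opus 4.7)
The plan is to recast the particle system \eqref{particle_system} as a special instance of the quasilinear integral evolution equation \eqref{abstract_quasilinear_integral} in the Banach space $L^\infty(\Omega,\R^d)$, with kernel $\sigma^N$ and source $f^N$ in place of $\sigma$ and $f$, and then invoke Proposition \ref{prop_existence_uniqueness} together with Remark \ref{rem_prop_existence_uniqueness} \emph{uniformly in} $N$.

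For each fixed $N$, standard finite-dimensional Picard--Lindel\"of already gives local existence and uniqueness of $(\xi_1^N,\ldots,\xi_N^N)$: the right-hand side of \eqref{particle_system} is locally Lipschitz in $(\xi_1^N,\ldots,\xi_N^N)$, since $y^N$ depends linearly on these variables through \eqref{defyN} and since $\sigma[t,\cdot]$, $f[t,\cdot]$ are Lipschitz on $L^\infty$ by \ref{A_sigma}--\ref{A_f}. The real question is whether the time of existence can be taken independent of $N$. Using the equivalence of \eqref{particle_system} with \eqref{eq_yN}, I would check directly that $\sigma^N$ and $f^N$ still satisfy \ref{A_sigma}--\ref{A_f} with the \emph{same} constants $\Vert\sigma\Vert_\infty$, $\Lip(\sigma)$, $\Vert f\Vert_\infty$, $\Lip(f)$ as $\sigma$ and $f$: sampling at the tag points $(x_i^N,x_j^N)$ and $x_i^N$ only selects finitely many values of the originals, so the $L^\infty$ bounds, the Lipschitz estimate in $z$, and the continuity in $t$ are inherited verbatim. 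Consequently, Assumptions \ref{H_Z}--\ref{H_f_Lip} hold for the operator $A^N[t,z]\,y(x)=\int_\Omega\sigma^N[t,z](x,x')y(x')\,d\nu(x')$ and the source $f^N[t,z]$ with constants that do not depend on $N$.

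I next control the initial datum. By construction $y^N(0,x)=y^0(x_i^N)$ for $x\in\Omega_i^N$, and because $y^0\in\mathscr{C}^0(\Omega,\R^d)$ is uniformly continuous on the compact set $\Omega$, the diameter bound in \eqref{def_tagged} yields
\[
\Vert y^N(0)-y^0\Vert_{L^\infty}\ \leq\ \omega_{y^0}\!\left(C_\Omega/N^\gamma\right)\ \xrightarrow[N\to\infty]{}\ 0,
\]
where $\omega_{y^0}$ denotes a modulus of continuity of $y^0$. Hence there exists $N_0\in\N^*$ such that $y^N(0)\in B_{L^\infty}(y^0,r/2)$ for every $N\geq N_0$.

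To conclude, I would apply Remark \ref{rem_prop_existence_uniqueness}: because the Banach space $L^\infty(\Omega,\R^d)$ is fixed and all constants in \ref{H_Z}--\ref{H_f_Lip} associated with $(\sigma^N,f^N)$ are independent of $N$, there is a uniform time $T''\in(0,T']$ such that, for \emph{any} initial datum in $B_{L^\infty}(y^0,r/2)$, the Cauchy problem \eqref{eq_yN} admits a unique solution on $[0,T'']$ remaining in $B_{L^\infty}(y^0,r)$. Applied with $(\sigma^N,f^N)$ and initial datum $y^N(0)$ for $N\geq N_0$, this produces the solution $y^N$, which via \eqref{defyN} is exactly the sought particle trajectory. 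The only genuinely delicate point in the argument, and the one I would single out as the main obstacle, is precisely this uniformity in $N$: what makes it work is the observation that the finite particle approximation is an \emph{exact} embedding of a finite system into the integral evolution framework, with constants inherited verbatim from $\sigma$ and $f$, so that the quantitative blow-ups in $\varepsilon$ that appear elsewhere in the paper never enter this existence step.
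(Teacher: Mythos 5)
Your proof is correct and follows essentially the same route as the paper: you recast \eqref{particle_system} as the integral evolution equation \eqref{eq_yN} in $L^\infty(\Omega,\R^d)$, observe that $\sigma^N,f^N$ inherit the bounds of $\sigma,f$ verbatim (so \ref{H_stab}--\ref{H_f_Lip} hold with $N$-independent constants), use uniform continuity of $y^0$ to place $y^N(0)\in B_{L^\infty}(y^0,r/2)$ for $N\geq N_0$, and then invoke Proposition~\ref{prop_existence_uniqueness} together with the shifted-initial-datum part of Remark~\ref{rem_prop_existence_uniqueness} to get a common time $T''$. The only cosmetic difference is that you spell out the finite-dimensional Picard--Lindel\"of step and the explicit inheritance of constants, which the paper leaves implicit.
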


\begin{proof}
The nontrivial statement is the fact that the time interval of definition is uniform with respect to sufficiently large values of $N$. This is obtained by applying Proposition \ref{prop_existence_uniqueness} and Remark \ref{rem_prop_existence_uniqueness} to the evolution equation \eqref{eq_yN} with $X=Z=L^\infty(\Omega,\R^d)$ as in Appendix \ref{app_abstract_quasilinear_integral_setting}, but with the operator $A^N[t,z]$ defined by $A^N[t,z]\, y(x) = \int_\Omega \sigma^N[t,z](x,x') \, y(x')\, d\nu(x')$ and with the function $f^N[t,z]$, noting that they satisfy Assumptions \ref{H_stab} to \ref{H_f_Lip} uniformly with respect to $N$.
The initial condition for \eqref{eq_yN} is $y^N(0) = \sum_{i=1}^N y^0(x_i^N) \, \mathds{1}_{\Omega_i^N}$.
Since $y^0$ is continuous on $\Omega$, the function $y^N(0)$ converges uniformly to $y^0$ (i.e., in $L^\infty$ topology) as $N\rightarrow+\infty$, hence $y^N(0)\in B_{L^\infty}(y^0,r/2)$ if $N$ is large enough. This proves the claim.
\end{proof}

\begin{remark}
In Proposition \ref{prop_existence_yN}, we have assumed that $y^0$ is continuous in order to ensure that, for $N$ large enough, $y^N(0)$ is sufficiently close to $y^0$ in $L^\infty$. An alternative, if one does not want to make this additional assumption, is to strengthen Assumptions \ref{A_sigma} and \ref{A_f} (and thus, Assumptions \ref{H_stab} to \ref{H_f_Lip}) by assuming that they are satisfied for every $r>0$ (while we have assumed that they are satisfied for one given $r$). Indeed, then, given any $y^0\in L^\infty(\Omega,\R^d)$, one chooses $r>0$ sufficiently large so that $y^N(0)\in B_{L^\infty}(y^0,r/2)$: we obtain Proposition \ref{prop_existence_yN} as well, with solutions defined on $[0,T'']$ for every $N\in\N^*$ (not only for $N$ large enough). 

In \ref{H_stab} to \ref{H_f_Lip}, we have assumed that ``there exists $r>0$", to follow the classical references \cite{Kato_1975, Kato_1993, Pazy}. But given that, in this appendix, we take $X=Z=L^\infty(\Omega,\R^d)$, in practice for most examples the assumptions are also satisfied for every $r>0$.
\end{remark}

\subsection{Convergence result}\label{app_thm_estim_graph}

\begin{theorem}\label{thm_estim_graph}
In the framework of Proposition \ref{prop_existence_yN}:
\begin{enumerate}[label=(\roman*), leftmargin=*,parsep=0mm, itemsep=0mm, topsep=1mm]
\item\label{item_CV_y_yN} For every $t\in[0,T'']$, we have $y(t,\cdot)\in\mathscr{C}^0(\Omega,\R^d)$ and
\begin{equation}\label{CV_y_yN}
\lim_{N\rightarrow+\infty} y^N(t,x) = y(t,x)
\end{equation}
uniformly with respect to $(t,x)\in[0,T'']\times\Omega$. In particular,
\begin{equation*}
\lim_{N\rightarrow+\infty} \xi_i^N(t) = y(t,x_i^N)\qquad\forall i\in\{1,\ldots,N\}.
\end{equation*}
\item\label{item_estim_y_yN} We assume moreover that $y^0$ is Lipschitz continuous on $\Omega$, with a Lipschitz constant $\Lip(y^0)$, and that, additionally to \ref{A_sigma} and \ref{A_f}, the functions $\sigma[t,z]$ and $f[t,z]$ are also Lipschitz continuous, i.e., we assume that
\begin{equation*}
\begin{split}
\Vert\sigma[t,z_1](x_1,x'_1)-\sigma[t,z_2](x_2,x'_2)\Vert_{\R^{d\times d}} 
&\leq \Lip(\sigma)\left( \Vert z_1-z_2\Vert_{L^\infty} + \mathrm{d}_\Omega(x_1,x_2) + \mathrm{d}_\Omega(x'_1,x'_2) \right) , \\
\Vert f[t,z_1](x_1)-f[t,z_2](x_2)\Vert_{\R^d} 
&\leq \Lip(f)\left( \Vert z_1-z_2\Vert_{L^\infty} + \mathrm{d}_\Omega(x_1,x_2) \right) ,
\end{split}
\end{equation*}
for all $t\in[0,T]$, $z_1,z_2\in B_{L^\infty}(y^0,r)$ and $x_1,x_2,x'_1,x'_2\in\Omega$.
Then, for every $t\in[0,T'']$, the function $y(t,\cdot)$ is Lipschitz continuous on $\Omega$, and
\begin{equation}\label{estim_y_yN}
\Vert y(t,x)-y^N(t,x)\Vert_{\R^d} \leq \frac{1}{N^\gamma} (a_1+a_2t) e^{a_3t} \leq \frac{\mathrm{Cst}}{N^\gamma}
\end{equation}
for all $(t,x)\in[0,T'']\times\Omega$ and $N\geq N_0$, with 
$$
a_1 = C_\Omega\Lip(y^0) + \frac{C_\Omega}{a_3} \left( 2\Lip(\sigma)(r+\Vert y^0\Vert_{L^\infty}) + \Lip(f) +\Vert\sigma\Vert_\infty \Lip(y^0) \right) ,
$$
$$
a_2 = \frac{\Vert\sigma\Vert_\infty}{a_3} \left(\Lip(\sigma) r + \Lip(f) \right),
\qquad
a_3=\Lip(\sigma)(r+\Vert y^0\Vert_{L^\infty}) + \Vert\sigma\Vert_\infty + \Lip(f) .
$$
In particular,
\begin{equation*}
\Vert y(t,x_i^N)-\xi_i^N(t)\Vert_{\R^d} \leq \frac{\mathrm{Cst}}{N^\gamma} .
\end{equation*}
\item\label{item_prolong_yN} In addition to \ref{item_estim_y_yN}, we assume that $y(\cdot)$ is well defined on the whole interval $[0,T]$ (recall that $T''\leq T'\leq T$) and that $y(t)\in B_{L^\infty}(y^0,r/2)$ for every $t\in[0,T]$. 
Then, there exists $N_1\geq N_0$ such that, for every $N\geq N_1$, $y^N(\cdot)$ is well defined on the whole interval $[0,T]$ and $y^N(t)\in B_{L^\infty}(y^0,r/4)$ for every $t\in[0,T]$. 
Moreover, the estimate \eqref{estim_y_yN} is valid on $[0,T]$.
\end{enumerate}
\end{theorem}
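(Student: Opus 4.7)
The three items should be proved in the order (ii) $\Rightarrow$ (i) $\Rightarrow$ (iii), since (i) is essentially a soft version of (ii) and (iii) follows from (ii) by a continuation argument. My central object will be the pointwise error $e^N(t,x)=y(t,x)-y^N(t,x)$, with the understanding that $y^N(t,x)=\xi_i^N(t)$ for $x\in\Omega_i^N$, and its uniform norm $E^N(t)=\Vert e^N(t,\cdot)\Vert_{L^\infty}$.

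For item (ii), the plan is to write both $\partial_t y(s,x)$ and $\dot\xi_i^N(s)$ as integrals over $\Omega$: after replacing $\frac{1}{N}\sum_j$ by $\sum_j\int_{\Omega_j^N}\,d\nu$ and using that $y^N$ is constant on each $\Omega_j^N$, the particle equation reads $\dot\xi_i^N(s)=\int_\Omega\sigma^N[s,y^N(s)](x_i^N,x')y^N(s,x')\,d\nu(x')+f[s,y^N(s)](x_i^N)$. For $x\in\Omega_i^N$ and $x'\in\Omega_j^N$, I would then decompose the integrand difference as
\begin{equation*}
\sigma[s,y(s)](x,x')\,y(s,x')-\sigma[s,y^N(s)](x_i^N,x_j^N)\,y^N(s,x')
\end{equation*}
into three contributions: the factor $\sigma[s,y(s)](x,x')(y-y^N)(s,x')$, bounded by $\Vert\sigma\Vert_\infty E^N(s)$; the Lipschitz-in-$z$ difference $\big(\sigma[s,y(s)]-\sigma[s,y^N(s)]\big)(x_i^N,x_j^N)\,y^N(s,x')$, bounded by $\Lip(\sigma)(r+\Vert y^0\Vert_{L^\infty})E^N(s)$; and the Lipschitz-in-$(x,x')$ remainder, bounded by $2\Lip(\sigma)(r+\Vert y^0\Vert_{L^\infty})C_\Omega/N^\gamma$ using $\diam(\Omega_i^N),\diam(\Omega_j^N)\leq C_\Omega/N^\gamma$. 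The analogous split for $f[s,y(s)](x)-f[s,y^N(s)](x_i^N)$ yields $\Lip(f)E^N(s)+\Lip(f)C_\Omega/N^\gamma$. Combining,
\begin{equation*}
\Vert \partial_t y(s,x)-\dot\xi_i^N(s)\Vert_{\R^d}\leq a_3 E^N(s)+\big(2\Lip(\sigma)(r+\Vert y^0\Vert_{L^\infty})+\Lip(f)\big)\frac{C_\Omega}{N^\gamma}.
\end{equation*}
The initial error on $\Omega_i^N$ is $\Vert y^0(x)-y^0(x_i^N)\Vert\leq \Lip(y^0)C_\Omega/N^\gamma$. Integrating in time and invoking Gronwall's inequality then yields a bound of the desired form $\frac{1}{N^\gamma}(a_1+a_2 t)e^{a_3 t}$; the precise coefficients $a_1,a_2$ in the statement follow by applying a slightly refined integral Gronwall that factors out $a_3$ (via $\int_0^t e^{-a_3 s}(\alpha+\beta s)\,ds$), and by additionally using that $\Lip(y(t,\cdot))\leq \Lip(y^0)+t(\Lip(\sigma)(r+\Vert y^0\Vert_{L^\infty})+\Lip(f))$, obtained by differentiating the integral equation in $x$ and applying the spatial Lipschitz bounds on $\sigma,f$. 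The Lipschitz continuity of $y(t,\cdot)$ claimed in the statement is itself a by-product of that computation.

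For item (i), the assumption $y^0\in\mathscr{C}^0(\Omega,\R^d)$ replaces the $\Lip(y^0)C_\Omega/N^\gamma$ initial bound by $\mathrm{o}(1)$, using uniform continuity of $y^0$ on the compact (hence totally bounded) $\Omega$. The Lipschitz-in-$(x,x')$ estimate for $\sigma$ is no longer available, but the Riemann-sum convergence in \eqref{CV_Riemann_sum_1}, applied to the continuous integrands $x'\mapsto\sigma[s,z](x,x')y(s,x')$ and to $f$, delivers a $\mathrm{o}(1)$ uniform in $(s,x)$ on the compact time-space cylinder. Re-running the Gronwall step with $\mathrm{o}(1)$ in place of $C/N^\gamma$ yields uniform convergence $y^N(t,x)\to y(t,x)$, and continuity of $y(t,\cdot)$ is propagated by the same calculation used for Lipschitz regularity but with moduli of continuity.

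For item (iii), the plan is a continuation argument: since $y(t)\in B_{L^\infty}(y^0,r/2)$ for all $t\in[0,T]$ and the estimate of (ii) gives $\Vert y(t)-y^N(t)\Vert_{L^\infty}\leq \mathrm{Cst}/N^\gamma$ on every subinterval where $y^N$ exists, one can pick $N_1\geq N_0$ so large that this gap is smaller than $r/4$. Then, whenever $y^N$ is defined up to some $t^\star\leq T$, it satisfies $y^N(t^\star)\in B_{L^\infty}(y^0,r/2)$, so Proposition \ref{prop_existence_yN} (applied with initial datum $y^N(t^\star)$) extends $y^N$ by a further uniform time $T''$. Since $T''$ does not depend on $N$ nor on the particular starting point in $B_{L^\infty}(y^0,r/2)$, finitely many such extensions reach $T$, and the Gronwall estimate applied globally yields \eqref{estim_y_yN} on $[0,T]$.

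The main obstacle is the careful tracking of constants in the Gronwall step so that the stated expressions for $a_1,a_2$ emerge, and ensuring that in the continuation argument of (iii) the Gronwall constants remain uniform — both of which rely crucially on the fact that the $L^\infty$-norm of $y^N(t)$ is controlled by $r+\Vert y^0\Vert_{L^\infty}$ as long as $y^N(t)\in B_{L^\infty}(y^0,r)$, so the nonlinear dependencies in $\sigma$ and $f$ only contribute linearly-in-$E^N$ source terms.
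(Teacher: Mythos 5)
Your overall architecture (prove (ii) first, deduce (i) as the ``soft'' version, then (iii) by continuation) matches the paper, but your handling of (ii) is a genuinely different decomposition. The paper compares $\int_\Omega\sigma[t,y(t)](x_i^N,x')y(t,x')\,d\nu(x')$ with its Riemann sum at the tag points, which forces it to invoke the quantitative Riemann-sum estimate \eqref{CV_Riemann_sum_3} for the function $x'\mapsto\sigma[t,y(t)](x_i^N,x')y(t,x')$ and hence the time-growing Lipschitz constant of $y(t,\cdot)$ (Lemma \ref{lem_y_Lip}); this is precisely where the $a_2t$ term and the $\Vert\sigma\Vert_\infty\Lip(y^0)$ contribution to $a_1$ come from. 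You instead compare directly with the piecewise-constant kernel acting on $y^N$ (i.e., with \eqref{eq_yN}), so all the discretization error is absorbed by the Lipschitz continuity of $\sigma$ and $f$ in $(x,x')$ together with the sampling error of $y^0$. That decomposition is correct, needs no Riemann-sum lemma and no bound on $\Lip(y(t,\cdot))$ for the error estimate, and after Gronwall gives the bound $\frac{1}{N^\gamma}\bigl(C_\Omega\Lip(y^0)+\frac{C_\Omega}{a_3}(2\Lip(\sigma)(r+\Vert y^0\Vert_{L^\infty})+\Lip(f))\bigr)e^{a_3t}$, which is smaller than the right-hand side of \eqref{estim_y_yN} and therefore suffices. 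But be aware that your account of how the ``precise coefficients $a_1,a_2$ emerge'' is inconsistent with your own computation: $\Lip(y(t,\cdot))$ never enters your decomposition, so you will not recover $a_2$ that way — either state the sharper constant and observe it implies \eqref{estim_y_yN}, or switch to the paper's Riemann-sum comparison for that one term. (The Lipschitz regularity of $y(t,\cdot)$ still has to be proved separately, as you indicate, since it is part of the statement.) Similarly, in (i) the appeal to \eqref{CV_Riemann_sum_1} is superfluous under your decomposition; what you actually need is uniform continuity of $(t,x,x')\mapsto\sigma[t,y(t)](x,x')$, of $f[t,y(t)]$, and of $y^0$ on the compact $\Omega$, which is available and is essentially what the paper uses.

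Item (iii) as you wrote it has a real flaw in the radius bookkeeping. From $y(t^\star)\in B_{L^\infty}(y^0,r/2)$ and an error at most $r/4$ you only get $y^N(t^\star)\in B_{L^\infty}(y^0,3r/4)$, not $B_{L^\infty}(y^0,r/2)$, and no choice of $N_1$ can push this below $r/2$ since $y$ itself may sit at distance $r/2$ from $y^0$. Consequently the uniform re-start time you invoke is not the one the paper provides: Remark \ref{rem_prop_existence_uniqueness} gives a uniform existence time only for initial data in $B_Z(y^0,r/2)$, and Proposition \ref{prop_existence_yN} concerns data sampled from the continuous $y^0$, not arbitrary piecewise-constant restart data. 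The idea is repairable, either by re-running the fixed-point argument to get a uniform extension time for data in $B_{L^\infty}(y^0,3r/4)$ with solutions confined to $B_{L^\infty}(y^0,r)$, or, more simply, by the paper's maximality argument: define $T_N\in[T'',T]$ as the maximal time such that $y^N(t)\in B_{L^\infty}(y^0,r)$ on $[0,T_N]$; on that interval the estimate of (ii) holds with constants evaluated at $T$, so for $N\geq N_1$ the solution actually stays in $B_{L^\infty}(y^0,3r/4)$, strictly inside the ball where the vector field of \eqref{eq_yN} is defined and Lipschitz, which forces $T_N=T$ and gives \eqref{estim_y_yN} on all of $[0,T]$.
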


Items \ref{item_CV_y_yN} and \ref{item_estim_y_yN} of Theorem \ref{thm_estim_graph} are reminiscent of the graph limit convergence result stated in \cite[Theorem 2.2]{PaulTrelat}. 

\begin{proof}
We start by proving \ref{item_estim_y_yN}. Hence, we assume that $\sigma$ and $f$ are Lipschitz continuous with respect to $z$ and $x$.

\begin{lemma}\label{lem_y_Lip}
We have $y(t)\in\Lip(\Omega,\R^d)$ 
for every $t\in[0,T']$, and $\Lip(y(t)) = \Lip(y^0) + t(\Lip(\sigma) r + \Lip(f))$.
\end{lemma}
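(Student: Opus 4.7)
The plan is to use the integrated (Duhamel) form of the quasilinear integral equation \eqref{abstract_quasilinear_integral} and simply bound the increment $y(t,x_1)-y(t,x_2)$ term by term with the Lipschitz assumptions now imposed on $\sigma$ and $f$ in $x$. Since in this setting $X=Z=L^\infty(\Omega,\R^d)$, $S=\mathrm{id}$, and Proposition \ref{prop_existence_uniqueness} gives a classical solution $y\in\mathscr{C}^1([0,T'],L^\infty)$ with $y(s)\in B_{L^\infty}(y^0,r)$ for all $s\in[0,T']$, integrating \eqref{abstract_quasilinear_integral} in time yields, for every $x\in\Omega$,
\[
y(t,x)=y^0(x)+\int_0^t\!\int_\Omega \sigma[s,y(s)](x,x')\,y(s,x')\,d\nu(x')\,ds+\int_0^t f[s,y(s)](x)\,ds.
\]

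First I would fix $x_1,x_2\in\Omega$ and subtract this identity at $x_1$ and $x_2$. The initial term contributes $\|y^0(x_1)-y^0(x_2)\|_{\R^d}\leq\Lip(y^0)\,\mathrm{d}_\Omega(x_1,x_2)$. For the integral term, I would use that the only $x$-dependence inside the integrand is through $\sigma[s,y(s)](x,x')$, so the strengthened Lipschitz hypothesis in item \ref{item_estim_y_yN} gives
\[
\bigl\Vert\sigma[s,y(s)](x_1,x')-\sigma[s,y(s)](x_2,x')\bigr\Vert_{\R^{d\times d}}\leq \Lip(\sigma)\,\mathrm{d}_\Omega(x_1,x_2),
\]
and then, since $\nu$ is a probability measure and $\Vert y(s)\Vert_{L^\infty}\leq\Vert y^0\Vert_{L^\infty}+r$ (because $y(s)\in B_{L^\infty}(y^0,r)$), the integrand is bounded pointwise in $x'$ by $\Lip(\sigma)(\Vert y^0\Vert_{L^\infty}+r)\,\mathrm{d}_\Omega(x_1,x_2)$. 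The $f$ term contributes $\Lip(f)\,\mathrm{d}_\Omega(x_1,x_2)$ per unit time by the new Lipschitz assumption on $f[s,z](\cdot)$. Summing these three contributions yields
\[
\Vert y(t,x_1)-y(t,x_2)\Vert_{\R^d}\leq \bigl(\Lip(y^0)+t\bigl(\Lip(\sigma)(\Vert y^0\Vert_{L^\infty}+r)+\Lip(f)\bigr)\bigr)\mathrm{d}_\Omega(x_1,x_2),
\]
which is the claimed bound (with $r$ in the statement understood as the uniform bound $\Vert y^0\Vert_{L^\infty}+r$ available on $\Vert y(s)\Vert_{L^\infty}$, consistently with the constant $a_3$ of Theorem \ref{thm_estim_graph}).

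There is essentially no obstacle: no Gronwall loop is required because $x$ enters neither through $y(s,x')$ in the integral nor through $y(s)$ in the bracket arguments $[s,y(s)]$, so the $x$-dependence inside the integral factors completely through $\sigma$ and $f$ and is absorbed in one shot. The only mild care point is to justify the uniform bound $\Vert y(s)\Vert_{L^\infty}\leq\Vert y^0\Vert_{L^\infty}+r$, which is given for free by Proposition \ref{prop_existence_uniqueness}, and to observe that the inequality holds for every $t\in[0,T']$ and hence a fortiori on the possibly smaller interval $[0,T'']$ appearing in Proposition \ref{prop_existence_yN}.
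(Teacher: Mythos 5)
Your proof is correct and follows essentially the same route as the paper's: the paper bounds $\Vert\partial_t y(t,x)-\partial_t y(t,x')\Vert_{\R^d}$ pointwise using the Lipschitz-in-$x$ hypotheses on $\sigma[t,y(t)]$ and $f[t,y(t)]$ and then integrates in time, which is the same computation as your integrated form, with no Gronwall loop needed in either version. The only difference is in the constant: you carry the bound $\Vert y(s)\Vert_{L^\infty}\leq r+\Vert y^0\Vert_{L^\infty}$, whereas the paper's statement writes $\Lip(\sigma)\,r$, a harmless discrepancy that you correctly flag and which is consistent with how the constants $a_1,a_2,a_3$ are used later.
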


\begin{proof}[Proof of Lemma \ref{lem_y_Lip}.]
Since $y(t)\in B_{L^\infty}(y^0,r)$, $\sigma[t,y(t)]$ and $f[t,y(t)]$ are Lipschitz continuous with respect to $x$ and thus, using \eqref{abstract_quasilinear_integral}, we have
\begin{equation*}
\begin{split}
\Vert \partial_t y(t,x)-\partial_t y(t,x')\Vert_{\R^d} 
&\leq \int_\Omega \Vert\sigma[t,y(t)](x,x'')-\sigma[t,y(t)](x',x'')\Vert_{\R^{d\times d}} \Vert y(t,x'')\Vert_{\R^d}\, d\nu(x'') \\
&\qquad\qquad + \Vert f[t,y(t)](x)-f[t,y(t)](x')\Vert_{\R^d} \\
&\leq (\Lip(\sigma) r+\Lip(f)) \mathrm{d}_\Omega(x,x')
\end{split}
\end{equation*}
and by integration we infer that $\Vert y(t,x)-y(t,x')\Vert_{\R^d} \leq \Vert y^0(x)-y^0(x')\Vert_{\R^d} + t(\Lip(\sigma) r+\Lip(f)) \mathrm{d}_\Omega(x,x')$. The result follows. 
\end{proof}

Let us establish \eqref{estim_y_yN}. We set $r^N(t)(x)=r^N(t,x) = y(t,x) - y^N(t,x)$, for all $t\in[0,T']$ and $x\in\Omega$. By definition, given any $i\in\{1,\ldots,N\}$ and any $x\in\Omega_i^N$, we have
\begin{eqnarray}
\partial_t r^N(t,x) &=& 
\int_\Omega \sigma[t,y(t)](x,x') y(t,x')\, d\nu(x') - \int_\Omega \sigma[t,y(t)](x_i^N,x') y(t,x')\, d\nu(x') \label{rNdot1} \\
&&+ \int_\Omega \sigma[t,y(t)](x_i^N,x') y(t,x')\, d\nu(x') - \frac{1}{N} \sum_{i=1}^N \sigma[t,y(t)](x_i^N,x_j^N) y(t,x_j^N) \label{rNdot2} \\
&& + f[t,y(t)](x) - f[t,y(t)](x_i^N) \label{rNdot5} \\
&&+ \frac{1}{N} \sum_{i=1}^N \left( \sigma[t,y(t)](x_i^N,x_j^N) - \sigma[t,y^N(t)](x_i^N,x_j^N) \right) y(t,x_j^N) \label{rNdot3} \\
&&+ \frac{1}{N} \sum_{i=1}^N \sigma[t,y^N(t)](x_i^N,x_j^N) \left( y(t,x_j^N) - y^N(t,x_j^N) \right) \label{rNdot4} \\
&& + f[t,y(t)](x_i^N) - f[t,y^N(t)](x_i^N) \label{rNdot6}
\end{eqnarray}
Let us estimate the various terms \eqref{rNdot1} to \eqref{rNdot5} in $\R^d$ norm.
Since $y(t)\in B_{L^\infty}(y^0,r)$ (and thus, $\Vert y(t,\cdot)\Vert_{L^\infty}\leq r+\Vert y^0\Vert_{L^\infty}$):
\begin{itemize}
\item Using the Lipschitz property of $\sigma[t,y(t)]$ assumed in \ref{item_estim_y_yN}, the norm of the term \eqref{rNdot1} is bounded by $\Lip(\sigma)(r+\Vert y^0\Vert_{L^\infty}) \mathrm{d}_\Omega(x,x_i^N)$, which is less than $\Lip(\sigma)(r+\Vert y^0\Vert_{L^\infty}) \frac{C_\Omega}{N^\gamma}$ by using \eqref{def_tagged}.
\item $\sigma[t,y(t)]$ is bounded by $\Vert\sigma\Vert_\infty$ and is Lipschitz continuous by the assumption done in \ref{item_estim_y_yN}; besides, $y(t)$ is bounded by $r+\Vert y^0\Vert_{L^\infty}$ and is $\Lip(y(t))$-Lipschitz continuous on $\Omega$ by Lemma \ref{lem_y_Lip}. Therefore, the function $x'\mapsto\sigma[t,y(t)](x,x')y(t,x')$ is Lipschitz continuous on $\Omega$, uniformly with respect to $x\in\Omega$, of Lipschitz constant $L_{\sigma y(t)} = \Lip(\sigma) (r+\Vert y^0\Vert_{L^\infty})+\Vert\sigma\Vert_\infty \Lip(y(t))$.
Now, using the estimate \eqref{CV_Riemann_sum_3} (Riemann sum theorem), it follows that the norm of \eqref{rNdot2} is bounded by $\frac{L_{\sigma y(t)}C_\Omega}{N^\gamma}$.
\item Using the Lipschitz property of $f[t,y(t)]$ assumed in \ref{item_estim_y_yN}, the norm of \eqref{rNdot5} is bounded by $\Lip(f)\mathrm{d}_\Omega(x,x_i^N)\leq\Lip(f) \frac{C_\Omega}{N^\gamma}$ (by using \eqref{def_tagged}).
\end{itemize}
Since $y(t)\in B_{L^\infty}(y^0,r)$ and $y^N(t)\in B_{L^\infty}(y^0,r)$ for $N\geq N_0$ by Proposition \ref{prop_existence_yN} (and thus, $\Vert y(t,\cdot)\Vert_{L^\infty}\leq r+\Vert y^0\Vert_{L^\infty}$ and $\Vert y^N(t,\cdot)\Vert_{L^\infty}\leq r+\Vert y^0\Vert_{L^\infty}$):
\begin{itemize}
\item Using \ref{A_sigma}, we have $\Vert \sigma[t,y(t)](x_i^N,x_j^N) - \sigma[t,y^N(t)](x_i^N,x_j^N)\Vert_{\R^{d\times d}} \leq \Lip(\sigma)\Vert y(t)-y^N(t)\Vert_{L^\infty}$, and thus the norm of \eqref{rNdot3} is bounded by $\Lip(\sigma)(r+\Vert y^0\Vert_{L^\infty})\Vert y(t)-y^N(t)\Vert_{L^\infty}$.
\item $\sigma[t,y^N(t)](x_i^N,x_j^N)\leq \Vert\sigma\Vert_\infty$, hence the norm of \eqref{rNdot4} is bounded by $\Vert\sigma\Vert_\infty\Vert y(t)-y^N(t)\Vert_{L^\infty}$.
\item Using \ref{A_f}, the norm of \eqref{rNdot6} is bounded by $\Lip(f)\Vert y(t)-y^N(t)\Vert_{L^\infty}$.
\end{itemize}
Therefore, we obtain
$$
\Vert\dot r^N(t)\Vert_{L^\infty} \leq a \Vert r^N(t)\Vert_{L^\infty} + \frac{b(t)}{N^\gamma}  
$$
for every $t\in[0,T']$ and every $N\geq N_0$,
with $a = \Lip(\sigma)(r+\Vert y^0\Vert_{L^\infty}) + \Vert\sigma\Vert_\infty + \Lip(f)$ and $b(t) = 2C_\Omega \Lip(\sigma)(r+\Vert y^0\Vert_{L^\infty}) + C_\Omega\Lip(f) + C_\Omega\Vert\sigma\Vert_\infty \Lip(y(t))$.
Integrating, and noting that $b$ is nondecreasing, we get
$$
\Vert r^N(t)\Vert_{L^\infty} \leq e^{at}\Vert r^N(0)\Vert_{L^\infty} + \frac{b(t)}{aN^\gamma}(e^{at}-1)
$$
and we conclude by noting that, for every $x\in\Omega$, there exists $i\in\{1,\ldots,N\}$ such that $x\in\Omega_i^N$ and then, since $y^N(0) = \sum_{i=1}^N y^0(x_i^N) \, \mathds{1}_{\Omega_i^N}$, we have  $\Vert r^N(t,0)\Vert_{\R^d} = \Vert y^0(x)-y^0(x_i^N)\Vert_{\R^d} \leq \Lip(y^0) \mathrm{d}_\Omega(x,x_i^N) \leq \frac{\Lip(y^0)C_\Omega}{N^\gamma}$.
We have proved \eqref{estim_y_yN}.

\medskip

Let us now prove \ref{item_CV_y_yN}.
Starting as in the proof of Lemma \ref{lem_y_Lip}, we infer from the continuity of $\sigma[t,y(t)]$ and of $f[t,y(t)]$ that, for any $\varepsilon>0$, if $x$ and $x'$ are sufficiently close then
$\Vert\partial_t y(t,x)-\partial_t y(t,x')\Vert_{\R^d}\leq \varepsilon (r+1)$
and therefore $\Vert y(t,x)-y(t,x')\Vert_{\R^d}\leq \Vert y^0(x)-y^0(x')\Vert_{\R^d}+\varepsilon (r+1)t$.
As a consequence, $y(t)$ is continuous on $\Omega$.
To establish \eqref{CV_y_yN}, we estimate again the various terms above but instead of using Lipschitz properties which yield estimates, we just use continuity properties to get that \eqref{rNdot1} converges to $0$, and we use \eqref{CV_Riemann_sum_2} (Riemann sum theorem) to get that \eqref{rNdot2} converges to $0$. The term \eqref{rNdot5} converges to $0$ by continuity of $f[t,y(t)]$. The estimates for \eqref{rNdot3}, \eqref{rNdot4} and \eqref{rNdot6} are unchanged. We obtain $\Vert\dot r^N(t)\Vert_{L^\infty} \leq a \Vert r^N(t)\Vert_{L^\infty} + \mathrm{o}(1)$ as $N\rightarrow+\infty$, and the result follows, using that $\Vert r^N(0)\Vert_{L^\infty}\rightarrow 0$.

\medskip

Let us finally establish \ref{item_prolong_yN}. We follow the proof of \ref{item_estim_y_yN}. First, using the additional assumption that $y(t)\in B_{L^\infty}(y^0,r)$ for every $t\in[0,T]$, Lemma \ref{lem_y_Lip} is now valid on the whole interval $[0,T]$. We now write $\partial_t r^N(t,x)$ as the sum of the terms \eqref{rNdot1} to \eqref{rNdot6}. The estimates of \eqref{rNdot1}, \eqref{rNdot2} and \eqref{rNdot5} are unchanged and are valid on $[0,T]$, since only $y(t)$ is involved.
Getting the estimates of \eqref{rNdot3}, \eqref{rNdot4} and \eqref{rNdot6} for any $N\geq N_0$ requires an additional argument because we do not know yet that $y^N(t)\in B_{L^\infty}(y^0,r)$ for every $t\in[0,T]$. Given any fixed $N\geq N_0$, let $T_N\in[T',T]$ be the maximal time such that $y^N(t)\in B_{L^\infty}(y^0,r)$ for every $t\in[0,T_N]$. Then, the estimates of \eqref{rNdot3}, \eqref{rNdot4} and \eqref{rNdot6} are valid on the interval $[0,T_N]$, and then the estimate \eqref{estim_y_yN} is obtained as previously, on the time interval $[0,T_N]$. 
Since $T_N\leq T$, we thus have $\Vert y(t)-y^N(t)\Vert_{L^\infty} \leq \frac{1}{N^\gamma} (a_1+a_2T) e^{a_3T}$ for every $t\in[0,T_N]$. Choosing $N_1\in\N^*$ large enough so that $\frac{1}{N_1^\gamma} (a_1+a_2T) e^{a_3T}\leq \frac{r}{4}$, using that $y(t)\in B_{L^\infty}(y^0,r/2)$ and using the triangular inequality, we infer that $y^N(t)\in B_{L^\infty}(y^0,\frac{3r}{4})$ for every $t\in[0,T_N]$. This argument shows that $T_N=T$, and finishes the proof of \ref{item_prolong_yN}. 
\end{proof}

\end{document}